\newcommand{\Def}[1]
{\textcolor{NiceBlue!90}{\textit{#1}}}
\newcommand{\real}{\realSp}
\newcommand{\realSp}{\ensuremath\text{\upshape\textsc{real}}}
\newcommand{\realCvx}{\ensuremath\text{\upshape\textsc{real}}_{\mathrm c}}
\newcommand{\realCvxCl}{\ensuremath\overline{\text{\upshape\textsc{real}}}_{\mathrm c}}
\theoremstyle{theorem}
\newtheorem{innercustomgeneric}{\customgenericname}
\providecommand{\customgenericname}{}
\newcommand{\newcustomtheorem}[2]{%
  \newenvironment{#1}[1]
  {%
   \renewcommand\customgenericname{#2}%
   \renewcommand\theinnercustomgeneric{##1}%
   \innercustomgeneric
  }
  {\endinnercustomgeneric}
}
\newcommand{\Y}{\ensuremath{\mathrm{Y}}}
\newcommand{\DY}{\ensuremath{\Delta\kern-1.2pt \Y}}
\newcommand{\YD}{\ensuremath{\Y\kern-1.2pt\Delta}}
\definecolor{NiceBlue}{rgb}{0.15, 0.2, 0.75}
\newlength{\dhatheight}
\newcommand{\addresseshere}{%
  \enddoc@text\let\enddoc@text\relax
}
\newcommand{\NN}{\mathbb{N}}    
\newcommand{\RR}{\mathbb{R}}    
\newcommand{\bs}[1]{\boldsymbol{#1}}
\newcommand{\bsdot}[1]{\dot{\boldsymbol{#1}}}
\newcommand{\dotbs}[1]{\dot{\boldsymbol{#1}}}
\newcommand{\barbs}[1]{\bar{\boldsymbol{#1}}}
\newcommand{\tildebs}[1]{\tilde{\boldsymbol{{#1}}}}
\newcommand{\tilbs}[1]{\tildebs{#1}}
\newcommand{\hatbs}[1]{\hat{\boldsymbol{#1}}}
\newcommand{\hati}{{\ensuremath{\hat\imath}}}
\newcommand{\hatj}{{\ensuremath{\hat\jmath}}}
\renewcommand{\ij}{{ij}}
\newcommand{\hatihatj}{{\hati\hatj}}
\newcommand{\hatij}{{\ensuremath{\kern1pt\widehat{\kern-1pt\imath\kern-2.3pt\jmath}}}}
\newcommand{\subalign}[1]{%
  \vcenter{%
    \Let@ \restore@math@cr \default@tag
    \baselineskip\fontdimen10 \scriptfont\tw@
    \advance\baselineskip\fontdimen12 \scriptfont\tw@
    \lineskip\thr@@\fontdimen8 \scriptfont\thr@@
    \lineskiplimit\lineskip
    \ialign{\hfil$\m@th\scriptstyle##$&$\m@th\scriptstyle{}##$\hfil\crcr
      #1\crcr
    }%
  }%
}
\newcommand{\contradiction}{\text{\Large$\smash{\lightning}$}}
\def\:{\colon}
\def\nlspace{\nolinebreak\space}
\def\nls{\nlspace}
\newcommand{\freespace}{\kern.07em}
\newcommand{\ul}[1]{\underline{\smash{#1}}}
\DeclareMathOperator{\aff}{aff}
\DeclareMathOperator{\conv}{conv}
\DeclareMathOperator{\GL}{GL}
\DeclareMathOperator{\corank}{corank}
\let\eps=\epsilon
\def\...{...}
\newcommand{\shortStyle}{\textit}
\newcommand{\shortStylenf}{\normalfont}
\newcommand{\ie}{\shortStyle{i.e.,}}
\newcommand{\eg}{\shortStyle{e.g.}}
\newcommand{\wrt}{\shortStyle{w.r.t.}}
\newcommand{\cf}{\shortStylenf{cf.}}
\renewcommand*{\eqref}[1]{%
  \hyperref[{#1}]{\textup{\tagform@{\ref*{#1}}}}%
}
\newcommand{\labelstyle}[1]{\upshape(\textit{#1})}
\newcommand{\mylabel}{\labelstyle{\roman*}}
\newenvironment{myenumerate}{\begin{enumerate}[label=\mylabel]}{\end{enumerate}}
\newenvironment{newmyenumerate}[1][]{\begin{enumerate}[#1]}{\end{enumerate}}
\newenvironment{myitemize}[1][]{\begin{itemize}[#1]}{\end{itemize}}
\def\itm#1{{\labelstyle{\romannumeral#1\relax}}}
\numberwithin{equation}{section}
\newtheoremstyle{mythmstyle} 
    {\parsep}                    
    {\parsep}                    
    {\itshape}                   
    {}                           
    {\bfseries\scshape}          
    {.}                          
    {.5em}                       
    {}  
\newtheoremstyle{mydefstyle} 
    {\parsep}                    
    {\parsep}                    
    {}                   
    {}                           
    {\mdseries\scshape}          
    {.}                          
    {.5em}                       
    {}  
\theoremstyle{theorem}
\newtheorem{theorem}{Theorem}[section]
\newtheorem{proposition}[theorem]{Proposition}
\newtheorem{lemma}[theorem]{Lemma}
\newtheorem{conjecture}[theorem]{Conjecture}
\theoremstyle{definition}
\newtheorem{definition}[theorem]{Definition}
\newtheorem{example}[theorem]{Example}
\newtheorem{remark}[theorem]{Remark}
\newtheorem{question}[theorem]{Question}
\crefname{theorem}{Theorem}{Theorems}
\crefname{proposition}{Proposition}{Propositions}
\crefname{lemma}{Lemma}{Lemmas}
\crefname{corollary}{Corollary}{Corollaries}
\crefname{remark}{Remark}{Remarks}
\crefname{example}{Example}{Examples}
\crefname{definition}{Definition}{Definitions}
\crefname{problem}{Problem}{Problems}
\crefname{observation}{Observation}{Observation}
\crefname{construction}{Construction}{Construction}
\begin{document}

\title[Rigidity of Polytopes]{Rigidity of polytopes with edge length and coplanarity constraints}

\author[M.\ Himmelmann]{Matthias Himmelmann}
\address{Institute of Analysis and Algebra, Technische Universit\"at Braunschweig, Germany}
\email{matthias.himmelmann@tu-braunschweig.de}

\author[B.\ Schulze]{Bernd Schulze}
\address{School of Mathematical Sciences, Lancaster University, Lancaster,
LA1 4YF, UK}
\email{b.schulze@lancaster.ac.uk}

\author[M.\ Winter]{Martin Winter}
\address{Nonlinear Algebra Group, Max-Planck-Institute for Mathematics in the Sciences, Leipzig, Germany}
\email{martin.winter@mis.mpg.de}

\subjclass[2010]{51M20, 52C25, 52B11}
%

\keywords{convex polytopes, polyhedral surfaces, rigidity and flexibility, point-hyperplane frameworks, generic rigidity}
		
\date{\today}
\begin{abstract}
We investigate a novel setting for polytope rigidity, where a flex must preserve edge lengths and the planarity of faces, but is allowed to change the shapes of faces. For instance, the regular cube is flexible in this notion.
We present techniques for constructing flexible polytopes and find that flexibility seems to be an exceptional property.
Based on this observation, we introduce a notion of generic realizations for polytopes and conjecture that convex polytopes are generically rigid in dimension $d\geq 3$. We prove~this~con\-jecture in dimension $d=3$.
Motivated by our findings we also pose several questions that are intended to inspire future research into this notion of polytope rigidity. 
\end{abstract}

\newpage

\maketitle

\section{Introduction}
\label{sec:introduction}

We study the rigidity theory of point-hyperplane frameworks obtained from convex polytopes.
That is, we ask which convex polytopes $P\subset\RR^d$ permit continuous deformations that preserve the combinatorial type, edge lengths and the planarity of faces; but not necessarily the shapes of the faces.
Examples for $d = 3$ are shown in \cref{fig:flexes}.

\begin{figure}[h]
    \centering
    \begin{subfigure}[b]{0.9\textwidth}
        \centering
        \includegraphics[width=0.63\textwidth]{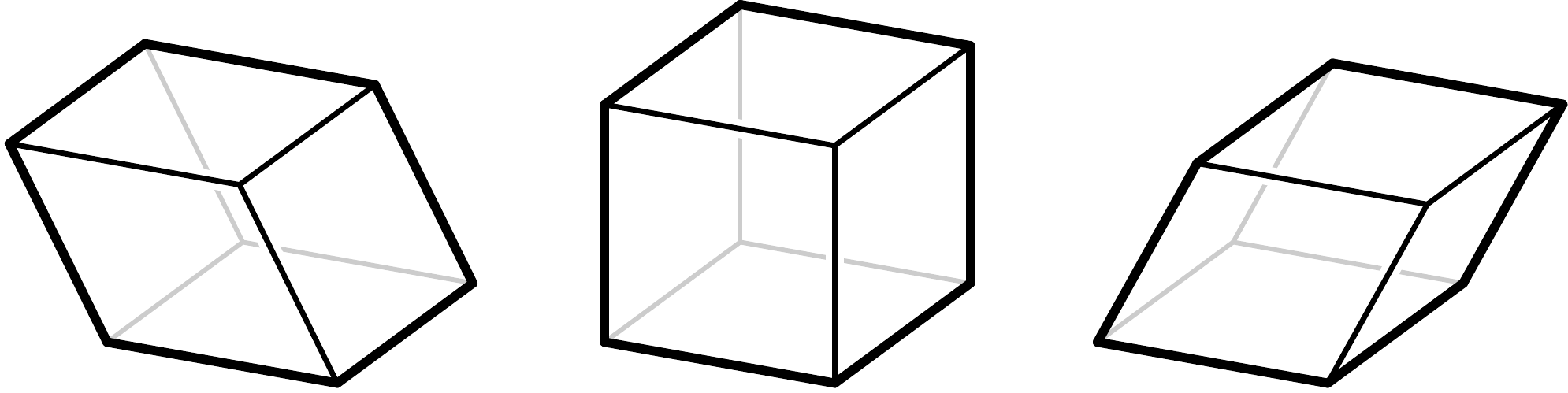}
        \caption{Continuous flex of a cube.}
        \label{fig:cube_flex}
    \end{subfigure}
    \begin{subfigure}[b]{0.52\textwidth}
        \centering
        \includegraphics[width=0.9\textwidth]{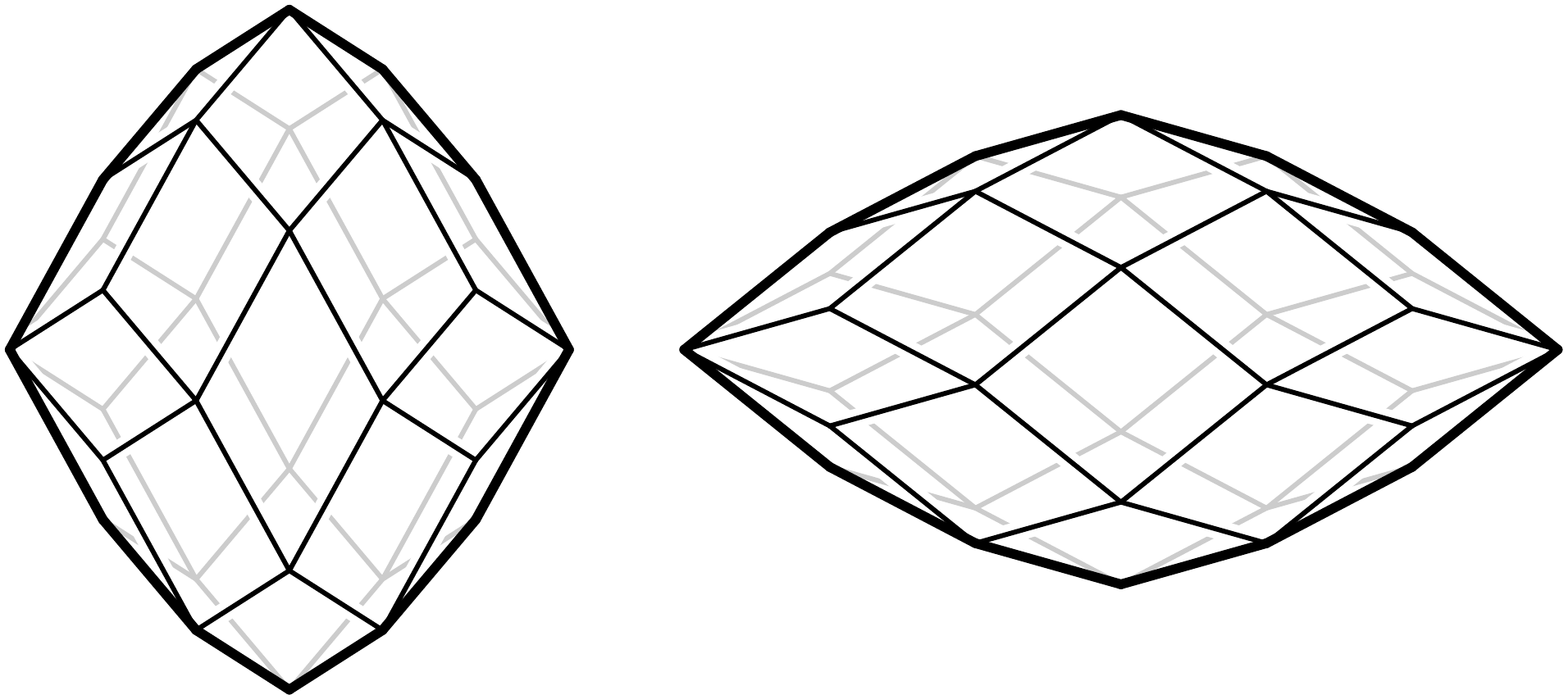}
        \caption{Continuous flex of a zonotope.}
        \label{fig:zonotope_flex}
    \end{subfigure}
    \begin{subfigure}[b]{0.47\textwidth}
        \centering
        \includegraphics[width=0.75\textwidth]{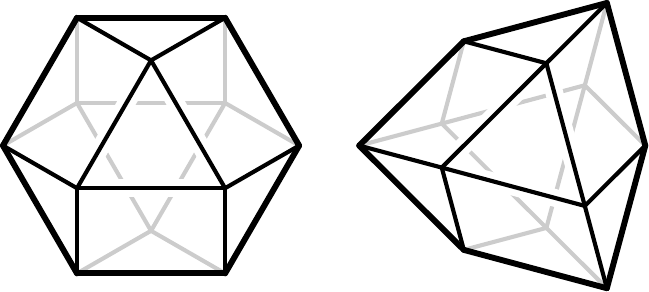}
        \caption{Continuous flex of the cuboctahedron.}
        \label{fig:cuboctahedron_flex_1}
    \end{subfigure}
    \caption{Examples of continuously flexing 3-dimensional polytopes.}
    \label{fig:flexes}
\end{figure}

Initial observations suggest that flexible polytopes (that is, polytopes that per\-mit a deformation in this sense) are rare, not only among combinatorial types,\nls but also~among realizations of a fixed combinatorial type.
This is not entirely unexpec\-ted: 
the Legendre-Steinitz theorem states that a 3-dimensional polytope has as~many edges as there are degrees of freedom in its realization space (see \eg\ \cite[Corollary 4.10]{rastanawi2021dimensions}).
This suggests that imposing one length constraint per edge should indeed~be sufficient to yield rigidity.
In higher dimensions one could argue that polytopes~are even overconstrained \cite[Remark 27.4]{pak2010lectures}.

While the existence of flexible polytopes shows that this heuristic is too simplistic, we still believe that it applies ``almost always''.
We will make this~precise using a notion of \emph{generic rigidity} for polytopes and propose the following conjecture:

\begin{conjecture}
    \label{conj:generic}
    Polytopes of dimension $d\ge 3$ are generically rigid.   
\end{conjecture}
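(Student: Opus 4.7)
The plan is to reduce Conjecture~\ref{conj:generic} to its first-order analogue. Since first-order rigidity implies rigidity, and since attaining the generic maximum rank of the rigidity matrix is an open semi-algebraic condition on the realization space, it suffices, for each combinatorial type $\typeP$ of convex $d$-polytope with $d\ge 3$, to exhibit a single realization whose only infinitesimal flexes (under the linearized edge-length and face-coplanarity constraints) are the trivial ones coming from the action of the Euclidean isometry group.

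For $d=3$, this can be handled via the Legendre--Steinitz count cited in the introduction: the realization space has exactly as many degrees of freedom as the polytope has edges, so imposing one length constraint per edge generically determines the polytope up to Euclidean isometry, which is precisely what the paper's main theorem establishes.

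For $d\ge 4$, the approach is to make rigorous the ``overconstrained'' heuristic of \cite[Remark 27.4]{pak2010lectures}. I would assemble a rigidity matrix $R(P)$ with one row per edge (length constraint) and, for each face $F$ of dimension $k\ge 2$ with $V_F$ vertices, $V_F-k-1$ rows enforcing coplanarity; the columns correspond to the $dV$ vertex coordinates. Trivial motions already contribute a $\binom{d+1}{2}$-dimensional subspace to the kernel, and the target is to produce, for every combinatorial type, a realization whose rigidity-matrix kernel equals this subspace. An inductive strategy is to build arbitrary $d$-polytopes from generically rigid $(d-1)$-polytopes via constructions such as pyramids with generic apex, joins with simplices, prisms, or connected sums along a facet, and to show that each operation preserves generic rigidity; combined with Pachner-type moves for simplicial polytopes and duality to access simple ones, this could in principle cover every combinatorial type.

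The main obstacle will be that these polytopal constructions interact delicately with the coplanarity constraints that are only active in higher dimension. In classical bar-joint rigidity, coning introduces only new edge constraints and slots cleanly into the rigidity matrix; in the polytopal setting, a facet of the base polytope becomes a ridge of the pyramid, and an entirely new layer of coplanarity constraints appears on faces that were formerly facets. Likewise, connected sums along a facet create adjacencies whose planarity is nontrivial to enforce. Establishing that any one such construction preserves generic rigidity will likely require a new lemma relating the rigidity matrix of the constructed polytope to that of its building blocks, and the non-simplicial case, in which faces may carry many vertices and contribute many coplanarity rows each, is likely to require a separate argument tailored to the face lattice.
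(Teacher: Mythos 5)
The conjecture is only proved in the paper for $d=3$ (\cref{res:generic}); the cases $d\ge 4$ are explicitly left open. Your proposal for $d=3$ does not in fact give a proof: the Legendre--Steinitz dimension count is precisely the heuristic the paper dismisses as ``too simplistic,'' because equality of the number of edge constraints with the number of nontrivial degrees of freedom does not establish that the constraint Jacobian attains maximal rank at any realization. Since flexible convex polyhedra exist (zonotopes, for example), there is no Dehn-type theorem to fall back on, and one must explicitly construct a first-order rigid realization for each polyhedral graph. The paper does this via a nontrivial induction on the polyhedral graph using Tutte's edge-contraction theorem for $3$-connected graphs, realized geometrically via Tutte embeddings and the Maxwell--Cremona correspondence, together with a delicate analysis of how first-order flexibility passes to the limit of a contraction sequence. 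Appealing to ``the paper's main theorem'' at this point is circular.

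For $d\ge 4$, the central obstruction to your plan is that the constructions you list --- pyramids with generic apex, joins with simplices, prisms, connected sums along a facet, and Pachner moves --- do not generate all combinatorial types of $d$-polytopes. In $d=3$, Tutte's theorem guarantees that every polyhedral graph is reached from $K_4$ by edge splittings, which is exactly what makes the paper's induction possible; no analogous complete set of generating operations is known in $d\ge 4$, and the universality phenomena for $4$-polytope realization spaces \cite{richter2006realization} make such a reduction implausible. The paper's own template for $d\ge 4$ (\cref{sec:higher_dim}) sidesteps this enumeration problem entirely by inducting on dimension: restrict a first-order motion to the facets, apply the induction hypothesis, and conclude via the Cauchy--Dehn theorem. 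The acknowledged gap there is that a generic realization of $\mathcal P$ need not have generic facets (stamp polytopes \cite{dobbins2013antiprismless} force a face to appear only up to projective equivalence), which motivates \cref{conj:projective}. That dimension-induction route is qualitatively different from your bottom-up construction strategy and is the more promising of the two precisely because it avoids any combinatorial classification of polytopes.
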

A precise formulation, with all terms defined, requires some preparation and will be presented in \cref{sec:generic}.
The reader unfamiliar with ``generic rigidity'' should~read this statement as
\begin{quote}
    \itshape
    A randomly chosen realization of a polytope is rigid.
\end{quote}
The main result of this article is the proof of \cref{conj:generic} in dimension $d=3$: 
\begin{theorem}
    \label{res:generic}
    \label{thm:generic}
    Polytopes of dimension $d=3$ 
    are generically rigid.   
\end{theorem}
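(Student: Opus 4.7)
The overall strategy I would follow is to prove \emph{infinitesimal} (first-order) rigidity for a single realization of each combinatorial type, since first-order rigidity implies rigidity and is cut out by a Zariski-open condition on $\mathrm{real}(\mathcal{P})$. Concretely, write $R(P)$ for the rigidity matrix of the point-hyperplane framework whose rows encode the edge-length conditions $\langle p_u-p_v,\dot p_u-\dot p_v\rangle=0$ and the vertex-face incidence conditions $\langle\dot p_v,n_f\rangle+\langle p_v,\dot n_f\rangle=\dot h_f$ (with $\langle n_f,\dot n_f\rangle=0$), and whose unknowns are the vertex velocities $\dot p_v\in\RR^3$ and the face-hyperplane velocities $(\dot n_f,\dot h_f)$. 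Euler's formula $V-E+F=2$ gives $3V+3F-3E=6$, so after modding out the $6$-dimensional space of trivial motions $R(P)$ becomes a \emph{square} matrix: the number of unknowns exactly matches the number of constraints. Generic rigidity is then the single condition $\det R(P)\ne 0$, and it suffices to exhibit one realization of each combinatorial type $\mathcal{P}$ for which this holds.

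When $\mathcal{P}$ is simplicial, all coplanarity constraints are vacuous, the point-hyperplane framework reduces to the classical bar-and-joint framework on the $1$-skeleton, and any convex realization is infinitesimally rigid by Dehn's theorem (the infinitesimal form of Cauchy's rigidity). For a general $\mathcal{P}$ I would fix a triangulation $\tilde{\mathcal{P}}$ of $\mathcal{P}$ (insert non-crossing diagonals into each non-triangular face), realize it convexly as $\tilde{P}$ via Steinitz's theorem, and try to leverage the Dehn-rigidity of $\tilde{P}$ to produce an infinitesimally rigid realization of $\mathcal{P}$. The numerical count is already balanced: removing a diagonal drops one bar from the rigidity matrix, while imposing coplanarity on the two adjacent triangles adds exactly one row of the form $\langle\dot p_v,n_f\rangle+\langle p_v,\dot n_f\rangle=\dot h_f$ for the newly non-triangular face.

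The main obstacle is precisely this \emph{exchange of constraints}: replacing a diagonal by a coplanarity relation is not an inclusion of kernels in either direction, so Dehn-rigidity of $\tilde{P}$ does not transfer automatically. I would try to cross this gap by an inductive swap, eliminating one diagonal at a time and using a generic-coordinate argument (non-vanishing of an explicit minor of $R$) to preserve full rank at each step. If this induction is too delicate to close, a natural fallback is the ``dual'' parametrization of $\mathrm{real}(\mathcal{P})$ by the face hyperplanes, which is $(3F-6)$-dimensional and hence of dimension $E$ by Euler; in these coordinates coplanarity is built in and one only needs to show that the $E$ edge-length functions are independent at a generic point, which is a linear-algebra statement on a genuinely square Jacobian. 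In either route, everything except a single linear-algebraic verification at one well-chosen realization is dimension counting, Euler's formula, and Zariski openness.
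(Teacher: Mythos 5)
You have correctly identified the Gluck-style reduction: since first-order rigidity is a Zariski-open condition, it suffices to exhibit a single first-order rigid (Zariski convex) realization of each combinatorial type, and one then has to work to produce that realization. This is indeed the structure of the paper's proof. However, the route you propose for producing the rigid realization leaves the essential step unproven, and the paper's proof takes a genuinely different path.

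Your primary route — triangulate $\mathcal{P}$, realize it convexly, then one-at-a-time delete a diagonal and replace it by a coplanarity relation on the merged face — is exactly the kind of constraint exchange that fails in general. Nothing in Dehn's theorem or in the rank balance you compute implies that the determinant survives even a single swap. In fact it provably does not survive for special realizations: the cube is flexible while the triangulated cube is Dehn-rigid, so the last swap on the regular cube must annihilate the determinant. You flag this, but the proposed fix (``use a generic-coordinate argument... non-vanishing of an explicit minor of $R$'') is not an argument — it is a restatement of the goal, namely that at a generic point the swapped rigidity matrix still has full rank. The difficulty is precisely in showing that this minor is not \emph{identically} zero on the realization variety, and that cannot be concluded from the count alone.

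Your fallback has a dimension-counting error: for a general $3$-polytope the face-hyperplane configuration space is not free of dimension $3F$, because each vertex lying on $k>3$ facets imposes $k-3$ conditions. Summing these up, the realization variety inside face-hyperplane coordinates has dimension $3F-(2E-3V)=E+6$, matching the known dimension of $\realCvx(\mathcal{P})$, not $3F$. The claimed identity $3F-6=E$ fails except for simple polytopes. More importantly, once you correct the count, ``show the $E$ edge-length functions are independent at a generic point of an $E$-dimensional variety'' is not a reduction of the problem — it \emph{is} the theorem. In the bar-joint setting Gluck's reduction works because the ambient realization space is $(\RR^3)^V$ (irreducible and affine), and Dehn supplies the witness point; here neither the witness nor the structure of the variety is free.

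The paper supplies the witness by a different mechanism: induction on the number of vertices of the polyhedral graph, contracting a (``well-contractible'') edge rather than inserting/removing diagonals. The induction step constructs a contraction sequence $P^n\to\tilde P$ of convex realizations of $G$ converging to a generic, ``well-shaped'' realization of $G/e$, using Tutte embeddings and the Maxwell-Cremona correspondence, and then proves the nontrivial limit lemma that a sequence of first-order flexible polyhedra cannot converge (under these technical hypotheses) to a first-order rigid one. The hard content sits entirely in the construction of the sequence and the limit transfer of first-order flexibility; these are where your proposal would have to be filled in, and they rely on tools (Tutte embeddings, Maxwell-Cremona, irreducibility of $\realCvxCl(G)$) that are specific to $d=3$ and are not touched in your sketch.
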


We emphasize that a generic realization does \emph{not} assume convexity.
Instead, it suffices to assume \emph{Zariski convex} -- a weaker condition than standard convexity,\nls and potentially quite general in dimension three (see \cref{def:generically_rigid}).

\begin{remark}
    \label{res:comparison}

\Cref{res:generic} combines features of several classical rigidity results~for polytopes
(for further details on the~classical results we refer to \cite[Chapter 26+]{pak2010lectures}):
\begin{myitemize}[leftmargin=2em,itemsep=0.75ex]
    \item 
    \Def{Cauchy's rigidity theorem} states that a convex polytope is determined by the shapes of its faces. Note that this not only asserts local rigidity, but~global~rigidity within the class of convex polytopes. In contrast to \cref{res:generic}~it~assumes that the shapes of faces do not change.
    \item 
    \Def{Dehn's theorem} asserts that any simplicial convex polytope is first-order rigid.
    An extension by Alexandrov (and later Whiteley) generalizes this statement~to non-simplicial polytopes where non-simplicial faces are triangulated without introducing new vertices in the interior, but potentially in the natural edges of the polytope~\cite{alexandrov2005convex,whiteley1984infinitesimally}.
    Even when allowing new vertices in the interior of faces, the resulting frameworks are second-order rigid (in fact, prestress~stable) due to Connelly and Gortler \cite{connelly1980rigidity,connelly2017prestress}.
    \item 
    \Def{Gluck's theorem} states that simplicial polytopes (not necessarily convex) are generically rigid \cite{gluck1975almost}. 
    In contrast to earlier results, which focus on the convex setting, non-convex polytopes can in fact be flexible, marking genericity~as an essential assumption.
    Famous examples of flexible polytopes are Bricard's~octahedra \cite{bricard1897memoire} as well as Connelly's \cite{connelly1977counterexample} and Steffen's \cite{steffen1978symmetric,lijingjiao2015asimoptimizing} flexible spheres. 
    A~new flexible polyhedron without self-intersections and attaining the theoretical minimum of eight vertices was recently found by Gallet, Grasegger, Legerský and Schicho \cite{gallet2024pentagonal}.
\end{myitemize}

An overview of the assumptions and conclusions of these results including \cref{res:generic} is given in \cref{tab:comparison}.
This comparison also reveals that \cref{res:generic} is best understood as an extension of Gluck's theorem to general combinatorial types.

\definecolor{ForestGreen}{RGB}{34,139,34}
\definecolor{Goldenrod}{RGB}{218,165,32}
\definecolor{Gray}{RGB}{95,95,95}
\newcommand{\goodcell}{\cellcolor{ForestGreen!25}\raisebox{-1pt}{No}}
\newcommand{\goodcellY}{\cellcolor{ForestGreen!25}\raisebox{-1pt}{Yes}}
\newcommand{\badcell}{\cellcolor{red!25}\raisebox{-1pt}{Yes}}
\newcommand{\okcell}{\cellcolor{Goldenrod!25}(\raisebox{-1pt}{Yes})}
\newcommand{\maybecell}{\cellcolor{Gray!25}{\raisebox{-1pt}?}}

\begin{table}[h!]
    \centering
    \begin{tabular}{|l|c|c|c|c|c|c|}
        \hline
        & assumes & assumes & assumes & assumes & extends &
        \\
        & rigid faces & convex & simplicial & generic & to $d>3$ & \makecell{proves}
        \\ \hline
        Cauchy & \badcell & \badcell & \goodcell & \goodcell & \goodcellY & global cvx
        \\ \hline
        Dehn & \okcell & \badcell & \badcell & \goodcell & \goodcellY & first-order
        \\ \hline
        Alex/Whit & \okcell & \badcell & \goodcell & \goodcell & \goodcellY & first-order
        \\ \hline
        Conn/Gor & \okcell & \badcell & \goodcell & \goodcell & \maybecell & second-order
        \\ \hline
        Gluck & \okcell & \goodcell & \badcell & \badcell & \goodcellY & first-order 
        \\ \hline
        \textbf{ours} & \goodcell & \goodcell\rlap & \goodcell & \badcell & \maybecell & first-order 
        \\ \hline           
    \end{tabular}
    \caption{
        Comparison of assumptions and conclusions for classical~rigi\-dity results and \cref{res:generic}.
        For Dehn and Gluck the rigidity of faces is implicit since the polytopes are simplicial.
        For Alexandrov/Whiteley and Connelly/Gortler the faces might change their shape only in so far as they are allowed to fold at triangulation edges. 
    }
    \label{tab:comparison}
\end{table}
\end{remark}

The rigidity and flexibility
analysis of polytopes with fixed edge lengths and planar, yet deformable, faces has potential
applications in engineering and robotics. In particular, it may inform the design of deployable and adaptable structures whose
faces are made from stretchable materials or membranes. The structures' geometrically-controlled deformations enable targeted functionality design
such as compact storage and dynamic shape-shifting \cite{MENG2023polyhedralmechanisms,zhai2018deployable}. Similar principles may also apply to biological systems, including the self-assembly of viruses, where capsids often arrange in polyhedral shapes \cite{Parvez2020geometricvirus,Perlmutter2015viralselfassembly}. Finally, these principles extend naturally to the design of quad meshes and origami-based structures, which offer compelling possibilities for use in architecture \cite{izmestiev2024tsurfaces,Sharifmoghaddam2024_thedra}.
Given that most polytopes are rigid, the design of any of the previously mentioned structures relies on a good understanding of the conditions under which flexibility can occur.

\subsection{Structure of the paper}

In \cref{sec:notation} we recall the necessary notions from polytopal combinatorics and geometry. 
Our notion of rigidity is formally introduced in \cref{sec:rigidity}, including the corresponding first-order theory.
In \cref{sec:examples}~we~provide the few examples of flexible polytopes that we are aware of and pose question derived from their structure.

\Cref{sec:generic} defines ``generic rigidity'' for polytopes and proves \cref{res:generic}.
The proof will span several subsections and makes use of a number of tools from the theory of planar graphs and polyhedral geometry, including the Maxwell-Cremona correspondence and Tutte embeddings.

\cref{sec:outlook} contains brief discussions of further aspects of polytope rigidity, such as higher dimensions, second-order rigidity and edge-length perturbations.

The paper also contains an appendix (\cref{sec:MCC_Tutte_topology_changes}) in which we prove that~both the Maxwell-Cremona correspondence and Tutte embeddings behave well under~certain limit operations. 
This result is not unexpected and is applicable in more general settings; still, we were not able to locate it in the literature.
The proof is not hard, but technical. 
Since a detailed treatment would distract from the main content, we decided to move it to the appendix. 

\section{Polytopes, combinatorial types and realizations}
\label{sec:notation}

Throughout 
this article let $P\subset\RR^d$ denote a $d$-dimensional \textit{convex}
polytope with non-empty interior and \Def{vertices} $p_1,...,p_n$ (\ie\ $P$ is the~convex hull $\conv\{p_1,...,p_n\}$).%

For our purpose, the combinatorics of a polytope can be described using its~vertex-facet incidence structure.
Recall that \Def{facet} refers to a maximal proper face, \ie\ a face of dimension $d-1$.
A \Def{combinatorial type}
$\mathcal P=(V,F,\sim)$ is a triple consisting of an (abstract) \Def{vertex set} $V$, an (abstract) \Def{facet set} $F$ and a \Def{vertex-facet incidence relation} ${\sim}\subseteq V\times F$.\footnote{
Usually, the combinatorial type of a polytope is defined via its \Def{face lattice}, that is, the partially ordered set made from the polytope's faces of all dimensions ordered by inclusion. Since the face-lattice can be reconstructed from the vertex-facet incidences, and vice versa, these notions are equivalent.}
Each convex polytope gives rise to such a structure.

To later enforce edge length constraints we 
also need to access vertex adjacency information in $\mathcal P$: we say that vertices $i,j\in V$ are \Def{adjacent} in $\mathcal P$, and denote this by $i\sim j$, if there are facets $\sigma_1,...,\sigma_r\in F$ so that $i$ and $j$ are the only two vertices incident to all of them.
This defines a graph structure on $V$ which we call the \Def{edge graph} $G_{\mathcal P}=(V,E)$.
If $\mathcal P$ is the combinatorial type of a convex polytope $P$, this notion agrees with the usual edge graph of $P$.
To emphasize the presence of this graph structure we also denote a combinatorial type by $(V,E,F,\sim)$.

A $d$-dimensional 
\Def{polytopal realization} of $\mathcal P$ is a pair $(\bs p,\bs a)$ consisting of a \textit{vertex map} $\bs p\: V\to\RR^d$ and a \textit{facet normal map} $\bs a\: F\to\RR^d$, so that for each $\sigma\in F$~we have $\|a_\sigma\|=1$ and the points (or \Def{vertices}) $\{p_i\mid i\sim\sigma\}$ lie on a common affine~hyperplane (the \Def{facet hyperplane}) with normal vector $a_\sigma$ (the \Def{facet normal}).\nls This can be~formally expressed as
$$\<p_j-p_i,a_\sigma\>=0,\quad\text{whenever $i,j\sim \sigma$}.$$
We say that a realization is (\Def{strictly})\! \Def{convex} if additionally
$$\<p_j-p_i,a_\sigma\><0,\quad\text{whenever $i\sim \sigma,j\not\sim\sigma$}.$$
Observe that each convex polytope $P$ with combinatorial type $\mathcal P$ gives rise to~a~convex realization in this sense and that the convex realizations are precisely the realizations obtained from convex polytopes. 
We often identify a polytope with the corresponding realization~and use $P$ and $(\bs p,\bs a)$ interchangeably.

All realizations of $\mathcal P$ taken together form the \Def{realization space} of $\mathcal P$: 
\begin{align*}
\realSp(\mathcal P)
&:=\left\{\!\!
\begin{array}{l}
    \bs p\:V\to\RR^d
    \\
    \bs a\:F\to\RR^d
\end{array}
\Big\vert
    \begin{array}{rcl}
    \|a_\sigma\| &\!\!\!\!=\!\!\!\!\!& 1 \text{ for all $\sigma\in F$} \\
    \<p_j-p_i,a_\sigma\> &\!\!\!\!=\!\!\!\!\!& 0\text{ whenever $i,j\sim \sigma$}\,
    \end{array}
\!\right\},
\end{align*}
Analogously, the \Def{convex realization space} is defined as
\begin{align*}
\realSp_{\mathrm c}(\mathcal P)
&:=\left\{\!\!
\begin{array}{l}
    \bs p\:V\to\RR^d
    \\
    \bs a\:F\to\RR^d
\end{array}
\Bigg\vert
    \begin{array}{rcl}
    \|a_\sigma\| &\!\!\!\!=\!\!\!\!\!& 1 \text{ for all $\sigma\in F$} \\
    \<p_j-p_i,a_\sigma\> &\!\!\!\!=\!\!\!\!\!& 0\text{ whenever $i,j\sim \sigma$}\,\\
    \<p_j-p_i,a_\sigma\> &\!\!\!\!<\!\!\!\!\!& 0\text{ whenever $i\sim \sigma, s\not\sim\sigma$}\,
    \end{array}
\!\right\},
\end{align*}
Clearly $\real(\mathcal P)$ is a real algebraic set in $\RR^{dV}\oplus\RR^{dF}$\!, and $\realCvx(\mathcal P)$ is a real semi-algebraic set in $\RR^{dV}\oplus\RR^{dF}$.

In later sections we focus specifically on polytopes in dimension three, which we call \Def{polyhedra}. 
We refer to \cref{sec:polyhedra} for any specific notation and background.

\section{Polytope rigidity}
\label{sec:rigidity}

We examine continuous deformations of $P$ that preserve edge lengths, analogous to bar-joint framework rigidity. Unlike in the framework setting, these motions~must also preserve the polytope's structure, that is, facet planarity and the combinatorial type.

Formally, we say that two realizations $(\bs p,\bs a),(\tilbs p,\tilbs a)\in\real(\mathcal P)$ are 
\begin{align*}
    \text{\Def{equivalent} if } \|p_j-p_i\|&=\|\tilde p_j-\tilde p_i\| \text{ for each edge $ij\in E$},
    \\
    \text{\Def{congruent} if } \|p_j-p_i\|&=\|\tilde p_j-\tilde p_i\|\text{  for each pair $i,j\in V$}.
\end{align*}
A \Def{motion} of $(\bs p,\bs a)$ is a curve $(\bs p^t,\bs a^t)\:[0,1]\to\real(\mathcal P)$ so that $(\bs p^t,\bs a^t)$ is equivalent to $(\bs p,\bs a)$ for all $t\in[0,1]$;
the motion is \Def{trivial} if $(\bs p^t,\bs a^t)$ is congruent to $(\bs p,\bs a)$ for all $t\in[0,1]$.
A non-trivial motion is called a \Def{flex}.
If a realization $(\bs p,\bs a)$ has a flex, then we call it \Def{flexible}, and \Def{rigid} otherwise.

Several examples of flexes in polytopes are shown in \cref{fig:flexes}.
More examples and concrete constructions are provided in \cref{sec:examples}.
The central question in this setting is:

\begin{question}
    Can one characterize the class of polytopes that are rigid or flexible? 
\end{question}

The reader familiar with \emph{point-hyperplane frameworks} (that is, bar-joint frameworks with additional coplanarity constraints) recognizes our construction as~a~special instance thereof.
We refer to Eftekhari et al.\ \cite{EJNSTW} for a general introduction and also for further details on their first-order theory.
Our investigation can be seen as an attempt at understanding where the origin of such a framework from a polytope allows us to prove stronger results than in the more general setting.
In particular convexity and sphericity are properties known for having strong implications in many contexts.

\subsection{First-order theory}
\label{sec:first_order}

The first-order theory of polytope rigidity is the same as for point-hyperplane~frameworks.
We recall the essentials.

A \Def{first-order motion} $(\bsdot p,\bsdot a)$ of $P$ consisting of maps $\bsdot p\:V\to\RR^d$ and $\bsdot a\:F\to\RR^d$, 
that satisfy the following equations:
\begin{align}
    \<p_i-p_j,\dot p_i-\dot p_j\> &= 0 ,\quad\text{whenever $ij\in E$},
    \label{eq:flex_at_vertex}
    \\
    \<p_i-p_j,\dot a_\sigma\> + \<\dot p_i-\dot p_j, a_\sigma\> &= 0 ,\quad\text{whenever $i,j\sim\sigma$},
    \label{eq:flex_at_vertex_face}
    \\
    \<a_\sigma,\dot a_\sigma\> &= 0,\quad\text{whenever $\sigma\in F$}.
    \label{eq:flex_at_face}
\end{align}

A first-order motion is \Def{trivial} if in addition it satisfies
\begin{align}
    \<p_i-p_j,\dot p_i-\dot p_j\> &= 0 ,\quad\text{for any $i,j\in V$},
    \label{eq:trivial_motion_vertex}
    \\
    \<p_i-p_j,\dot a_\sigma\> + \<\dot p_i-\dot p_j, a_\sigma\> &= 0 ,\quad\text{for any $i,j\in V$ and $\sigma\in F$},
    \label{eq:trivial_motion_vertex_facet}
    \\
    \<a_\sigma,\dot a_\tau\> + \<\dot a_\sigma, a_\tau\> &= 0 ,\quad\text{for any $\sigma,\tau\in F$}.
    \label{eq:trivial_motion_facet}
\end{align}
A non-trivial first-order motion is called a \Def{first-order flex}.
A framework is \Def{first-order rigid} if it has no first-order flexes; it is \Def{first-order flexible} otherwise.
Many results from the bar-joint framework setting transfer in the expected way. In particular, if a polytope is first-order rigid, then it is rigid. 

The first-order flex conditions \eqref{eq:flex_at_vertex} -- \eqref{eq:flex_at_face} can be collected in a corresponding \Def{rigidity matrix} $\mathcal R(P)$ 
which satisfies $\mathcal R(P)(\dotbs p,\dotbs a)=0$ if and only if $(\dotbs p,\dotbs a)$ is a first-order motion.
Since we only consider polytopes with non-empty interior, they are in particular affinely spanning and
the rank condition of first-order rigidity applies: 

\begin{lemma}
    \label{res:rank_condition}
    We have
    $$\corank \mathcal R(P) \ge \binom{d+1}{2},$$
    with equality if and only if $P$ is first-order rigid.
\end{lemma}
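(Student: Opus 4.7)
The plan is to identify the kernel of $\mathcal R(P)$ as the space of first-order motions, locate the $\binom{d+1}{2}$-dimensional subspace of trivial motions inside it, and then read off the equivalence: first-order rigidity means no other motions exist.

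First I would describe the trivial motions explicitly. A trivial first-order motion should arise from an infinitesimal rigid motion $x\mapsto Sx+t$ of $\RR^d$, where $S$ is skew-symmetric and $t\in\RR^d$, acting simultaneously on vertices and facet normals via
\[
\dot p_i = Sp_i + t, \qquad \dot a_\sigma = Sa_\sigma.
\]
This gives a linear map $\Phi\:\mathfrak{so}(d)\oplus\RR^d\to \RR^{dV}\oplus\RR^{dF}$ of domain dimension $\binom{d}{2}+d=\binom{d+1}{2}$. I would then verify $\Phi$ is injective: if $\dot p_i=Sp_i+t$ vanishes for all $i$ and the vertices affinely span $\RR^d$ (which holds since $P$ has non-empty interior), then $S=0$ and $t=0$. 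This establishes that the space of trivial motions has dimension exactly $\binom{d+1}{2}$.

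Next I would check that every trivial motion $(\dotbs p,\dotbs a)=\Phi(S,t)$ actually satisfies the defining equations \eqref{eq:flex_at_vertex}--\eqref{eq:flex_at_face} of a first-order motion, hence lies in $\ker\mathcal R(P)$. For \eqref{eq:flex_at_vertex}: $\<p_i-p_j,S(p_i-p_j)\>=0$ by skew-symmetry. For \eqref{eq:flex_at_face}: $\<a_\sigma,Sa_\sigma\>=0$ similarly. For \eqref{eq:flex_at_vertex_face}, a short calculation using $S^\T=-S$ gives
\[
\<p_i-p_j,Sa_\sigma\>+\<S(p_i-p_j),a_\sigma\>=\<p_i-p_j,Sa_\sigma\>-\<p_i-p_j,Sa_\sigma\>=0.
\]
This proves $\corank\mathcal R(P)\ge\binom{d+1}{2}$.

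For the equality claim I would argue as follows. The trivial first-order motions, as characterized by \eqref{eq:trivial_motion_vertex}--\eqref{eq:trivial_motion_facet}, coincide exactly with the image of $\Phi$: condition \eqref{eq:trivial_motion_vertex} forces $\dot p_i=Sp_i+t$ for some skew $S$ (since the vertices affinely span $\RR^d$), \eqref{eq:trivial_motion_facet} forces $\dot a_\sigma=Ta_\sigma$ for some skew $T$ (since the facet normals of any $d$-polytope with non-empty interior span $\RR^d$), and \eqref{eq:trivial_motion_vertex_facet} forces $T=S$. By definition, $P$ is first-order rigid exactly when every element of $\ker\mathcal R(P)$ is trivial, i.e.\ lies in the image of $\Phi$; combined with the dimension count this gives $\corank\mathcal R(P)=\binom{d+1}{2}$, completing the equivalence.

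I do not anticipate any real obstacle here; the only place care is needed is in the dimension count, which relies on both the affine spanning of the vertices and the linear spanning of the facet normals—the latter is where the non-empty interior hypothesis on $P$ is really used. Everything else is a direct verification that can essentially be imported from the point-hyperplane framework literature \cite{EJNSTW}.
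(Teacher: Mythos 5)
The paper states \cref{res:rank_condition} without proof, treating it as a standard fact imported from the point-hyperplane framework literature (the paragraph before the lemma just remarks that the non-empty interior hypothesis makes the ``rank condition of first-order rigidity'' applicable, and \cite{EJNSTW} is cited in the preceding subsection). Your argument is a correct, self-contained proof of exactly that fact: you identify the trivial motions with $\Phi(\mathfrak{so}(d)\oplus\RR^d)$, verify injectivity via affine spanning of the vertices, check that the image satisfies \eqref{eq:flex_at_vertex}--\eqref{eq:flex_at_face}, and show conversely that \eqref{eq:trivial_motion_vertex}--\eqref{eq:trivial_motion_facet} pin down $\dot p_i=Sp_i+t$ and $\dot a_\sigma=Ta_\sigma$ with $T=S$. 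Since the paper gives no proof of its own, there is no genuine route to compare; your writeup simply fills in the omitted verification. One point worth keeping explicit, which you correctly flag at the end: the identification of the trivial $\dot a$-component needs the facet normals to \emph{linearly} span $\RR^d$ (automatic for a bounded $d$-polytope with non-empty interior), and condition \eqref{eq:trivial_motion_vertex_facet} then forces the two skew matrices to coincide only because both the edge directions and the normals span. That is the only place where the polytope geometry, rather than generic framework algebra, is actually used.
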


\section{Flexible polytopes}
\label{sec:examples}

Since flexible polytopes seem rare, this section is dedicated to collecting the few constructions that we are aware of.
Clearly, convex polytopes of dimension $d=2$ (that is, polygons) are flexible if and only if they have more than three vertices~(see \cref{fig:flex_polygon}). We shall focus on dimension $d\ge 3$.

\begin{figure}[h!]
    \centering
    \includegraphics[width=0.68\linewidth]{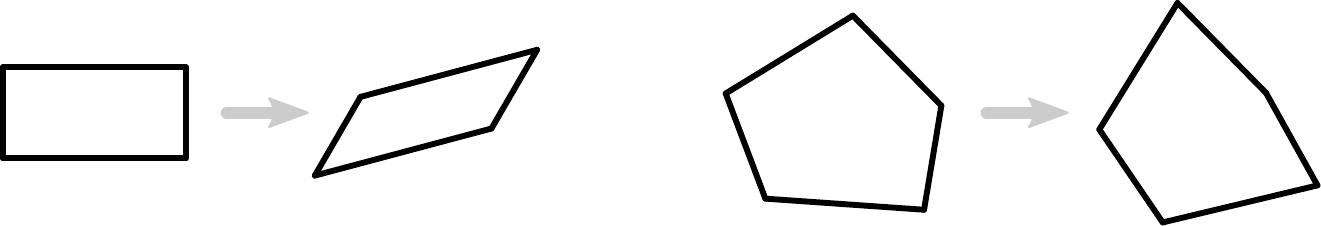}
    \caption{Flexible polygons.}
    \label{fig:flex_polygon}
\end{figure}

\subsection{Minkowski sums}
\label{sec:minkowski_sums}

Our most versatile tool for constructing flexible polytopes are Minkowski sums: for polytopes $P,Q\subset\RR^d$ the \Def{Minkowski sum} is
$$P+Q:=\{p+q\mid p\in P\text{ and }q\in Q\}.$$
Recall that if $P+Q$ has an edge $e$ in direction $v\in\RR^d$, then either $P$ or $Q$ (or both) have an edge in direction $v$.

Consider motions $P^t$ and $Q^t$ (which might be trivial) for which $P^t+Q^t$ stays of a fixed combinatorial type. 
For simplicity we assume that $P^t$ and $Q^t$ do not share edge directions. Then each edge of $P^t+Q^t$ corresponds to an edge of either $P^t$ or $Q^t$.
Since the motions $P^t$ and $Q^t$ preserve edge lengths, it follows that $P^t+Q^t$ is a motion as well.

There are generally two ways to obtain new flexible polytopes using this:
\begin{enumerate}
    \setlength{\itemsep}{1ex}
    \item 
    Even if $P^t$ and $Q^t$ are trivial motions, $P^t+Q^t$ can be a flex.
    For example, let $P^t:=P^0$ be the constant motion, and let $Q^t$ be a continuous reorientation.
    If the orientations of $P^0$ and $Q^0$ are generic (\ie\ have no parallel faces), then the combinatorial type of $P^t+{}$ $Q^t$ is preserved initially, which yields a flex (see \cref{fig:cuboctahedron_flex}).
\begin{figure}[h!]
    \centering
    \includegraphics[width=0.46\linewidth]{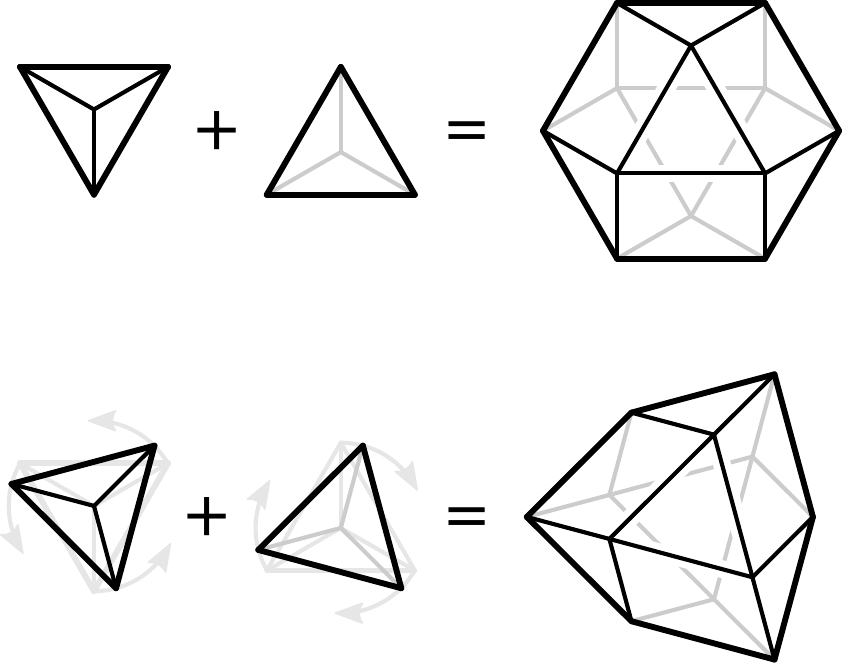}
    \caption{The cuboctahdron is the Minkowski sum of two simplices. Continuously changing the orientation of the summands yields a flex of the cuboctahedron.}
    \label{fig:cuboctahedron_flex}
\end{figure}
    \item 
    The Cartesian product $P^t \times Q^t$ is a special case of a Minkowski sum when the summands are in orthogonal subspaces.
    This enables the construction of new flexible polytopes using flexible polytopes from lower dimensions.
    In particular, we can use flexible polygons to obtain many flexible polytopes in dimensions $d\ge 3$ (such as prisms or duoprisms).
\end{enumerate}
We shall call flexes that are of the form $P^t+Q^t$ \Def{Minkowski flexes}.

\subsection{Zonotopes}
\label{sec:zonotopes}

A zonotope $Z=g_1+\cdots +g_r$ is the Minkowski sum of line~segments $g_i$. 
By the discussion of \cref{sec:minkowski_sums} we already know that a zonotope generated from generically oriented line segments is flexible.
We demonstrate here~that in fact \emph{every} zonotope is flexible (\eg\ the permutahedron).
The difficulty is that~an arbitrary reorientation of the $g_i$ does not necessarily preserve a zonotope's combinatorial structure; and it is not immediately clear that a combinatorics-preserving reorientation even exists.

For an arbitrary zonotope $Z=g_1+\cdots +g_r$ 
we choose a continuous family~$A^t\in\GL(\RR^d)$ of linear transformations that do not all preserve the angles between the $g_i$. 
Then,
$$g^t_i := \|g_i\|\cdot \frac{A^t g_i}{\|A^t g_i\|}$$ 
has the direction of $A^t g_i$ but the length of $g_i$. The zonotope $Z^t := g_1^t+\cdots + g_r^t$~has the same combinatorics and edge lengths as $Z$, but since $A^t$ does not preserve all angles between the $g_i$, $Z^t$ has different face angles than $Z$. We therefore defined a flex. 
An example of this is shown in \cref{fig:zonotope_flex} where $A^t$ are non-uniform scalings along an axis.

\subsection{Affine flexes and few edge directions}
\label{sec:affine_flexes}

An \Def{affine flex} is obtained through applying a continuous family of affine transformations (similar to the construction in \cref{sec:zonotopes}, but without rescaling edges afterwards).
It is known that a general framework has an affine flex if and only if its edge directions lie on a quadric~at~infinity (see \cite[Proposition 4.2]{connelly2005generic} or 
\cite[Proposition 1.4]{connelly2018affine} for details).
This happens~in particular if there are few edge directions. 
For example, in dimension $d=3$ each polytope with at most five edge direc\-tions has an affine flex because any five points in the plane lie on a quadric.
Examples are the cube and the polytope shown~in~\cref{fig:affine_flexible}.
The zonotope flex in \cref{fig:zonotope_flex} is an example of an affine flex with more than five edge directions (its edge directions lie on a circular cone, hence, ``on a~circle at infinity'').

\begin{figure}[h!]
    \centering    \includegraphics[width=0.21\linewidth]{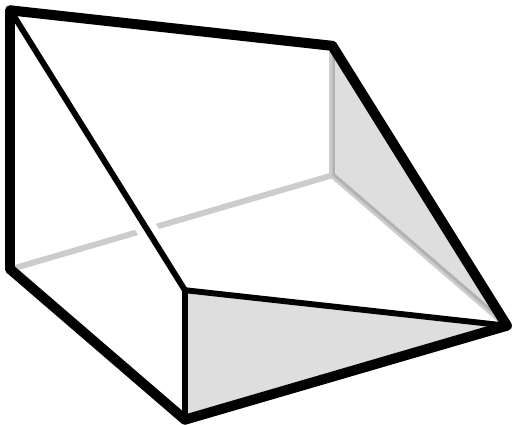}
    \caption{A 3-polytope that is affinely flexible because it has only five edge direction. It is also a Minkowski sum of the shaded faces.}
    \label{fig:affine_flexible}
\end{figure}

All examples of polytopes with affine flexes that we are aware of are Minkowski sums, and it is not clear whether this is always the case.

\begin{question}
    Is there an affine flex that is not a Minkowski flex?
\end{question}

\subsection{Stacking and irreducible flexes}

All concrete examples we encountered so far have been Minkowski sums. 
It is however not hard to come up with flexible~po\-lytopes that are not Minkowski sums, \eg\ by stacking pyramids onto some of their flexing faces (see \cref{fig:stacking}).

\begin{figure}[h!]
    \centering    \includegraphics[width=0.18\linewidth]{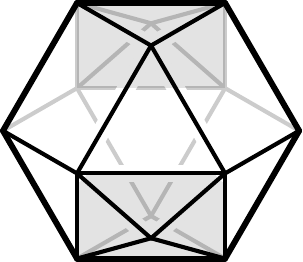}
    \caption{The cuboctahedron with two stacked faces is not~a~Minkow\-ski sum, but is still flexible. This is because the stacking restricts only one of the three degrees of freedom that come from the three relative rotations of the Minkowski summands of the cuboctahedron.}
    \label{fig:stacking}
\end{figure}

Even though the resulting polytopes are not Minkowski sums, the construction does not seem to yield a ``completely separate type of flexibility'' either. 

A polytope flex $P^t$ is said to be \Def{dissectable} if there is a family of hyperplanes~$E^t$ so that $P^t\cap H_{\pm}^t$ are motions of full-dimensional polytopes, where $H_+^t,H_-^t\subset\RR^d$ are the closed halfspaces defined by $E^t$ (see \cref{fig:slicing}).

\begin{figure}[h!]
    \centering
    \includegraphics[width=0.47\linewidth]{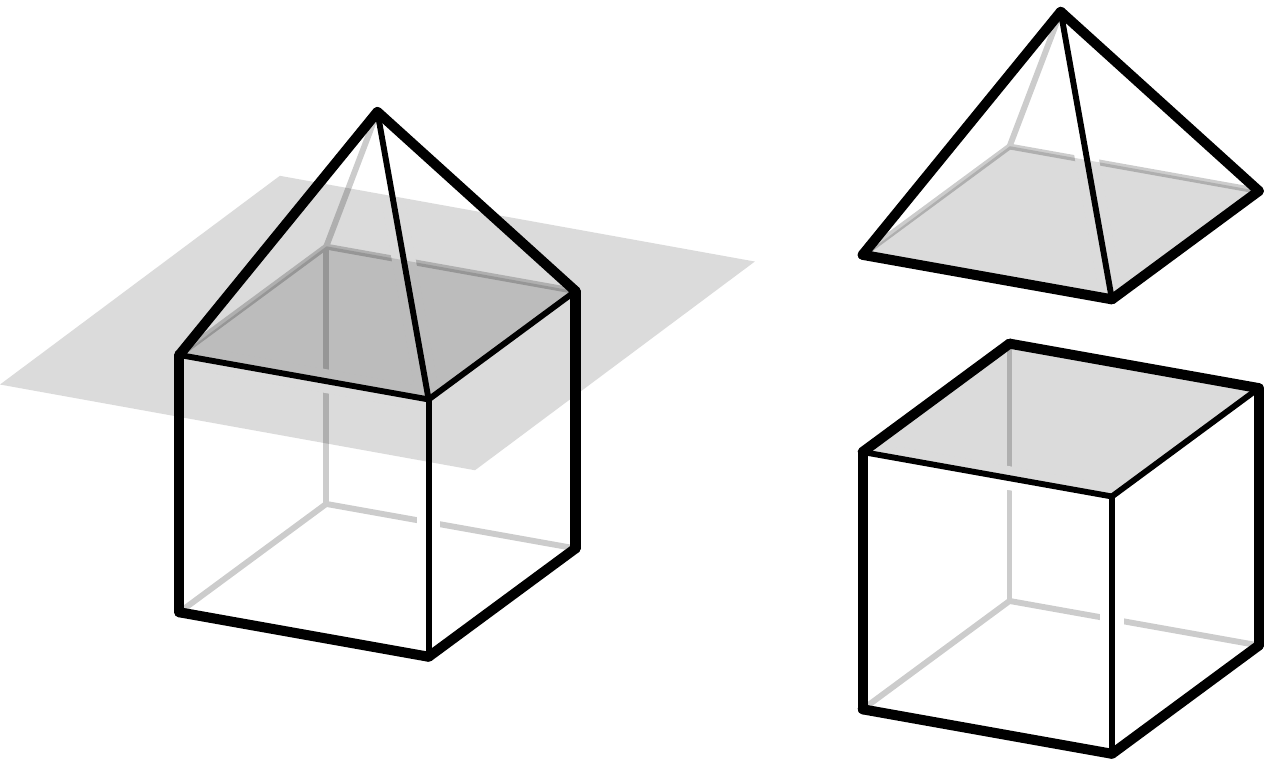}
    \caption{Each flex of the ``cube with pyramid roof'' is dissectable~since it can be split into a flexing cube and a rigid pyramid using the shown hyperplane.}
    \label{fig:slicing}
\end{figure}

\begin{question}
    Are there non-dissectable polytope flexes other than the Minkowski flex\-es?
\end{question}

\subsection{Further questions}

Based on the examples of flexible polytopes that we are aware of, a number of questions arise. 

\begin{question}
    \label{q:affine_trafo}
    Is polytope rigidity preserved under affine transformations?
\end{question}

This is in particular the case for Minkowski flexes and affine flexes.
Observe that already for Minkowski flexes it is not obvious how the flex changes under an affine transformation unless one knows a Minkowski decomposition.
One~should~note~that not even bar-joint rigidity, let alone point-hyper\-plane rigidity, is preserved under general affine transformations (see \eg\ the classifications of flexible frameworks of $K_{3,3}$ \cite{wunderlich1976deformable,husty2007nine}).

\begin{question}
    \label{q:parallel_edges}
    Does a flexible polytope with $d\ge 3$ necessarily have parallel edges?
\end{question}

Once again, this question is inspired by the examples that we know of and is our best guess for what a necessary structural property might look like.

\section{Generic rigidity of polytopes}
\label{sec:generic}
\label{sec:generic_rigidity}

In this section, we address \cref{conj:generic}, clarify the meaning of \Def{generic} in the context of polytopes, and prove our main result \cref{res:generic}.
We first provide a more precise formulation of the conjecture:

\begin{conjecture}
    \label{conj:generic_2}
    If $\mathcal P$ is the combinatorial type of a convex polytope of dimension $d\ge 3$ then $\mathcal P$ is generically (first-order) rigid.
\end{conjecture}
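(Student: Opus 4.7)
The plan is to reduce \cref{conj:generic_2} in the case $d=3$ (where the tools from the excerpt apply) to exhibiting a \emph{single} first-order rigid realization in each combinatorial type $\mathcal{P}$. Indeed, first-order rigidity is a rank condition on $\mathcal{R}(P)$ (\cref{res:rank_condition}), which is Zariski-open on $\real(\mathcal{P})$; since $\realCvx(\mathcal{P})$ is by Steinitz contained in a single irreducible component of $\real(\mathcal{P})$, one rigid example is enough. Moreover the Euler identity $v-e+f=2$ specializes to $3e=3v+3f-6$, so the reduced rigidity matrix is square, and first-order rigidity becomes equivalent to the nonexistence of a nontrivial polytopal ``stress'' (an element of the cokernel).

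For the target realization $P^\ast\in\realCvx(\mathcal{P})$ I would combine two classical ingredients. First, pick a face $\Delta\in F$ of the edge graph $G_{\mathcal P}$ and compute a Tutte embedding of $G_{\mathcal P}$ in the plane with $\Delta$ as a strictly convex outer boundary; this yields a planar framework in which every interior face is convex. Second, apply the Maxwell--Cremona correspondence to the canonical positive self-stress of the Tutte embedding to lift the planar framework to a strictly convex polyhedron $P^\ast\subset\mathbb{R}^3$ realizing $\mathcal{P}$. To verify first-order rigidity at $P^\ast$, I would analyze a putative first-order flex $(\dotbs p,\dotbs a)$ by decomposing $\dotbs p$ into a horizontal part (projection along the $\Delta$-normal) and a vertical part: $3$-connectedness of $G_{\mathcal P}$ together with the convexity of all Tutte faces should force the horizontal component to be an infinitesimal rigid motion of the plane, and the Maxwell--Cremona correspondence should then identify the vertical component with a stress variation of the Tutte framework, which is determined up to the affine lifts (\emph{i.e.}\ up to trivial motions).

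A route that seems better aligned with the appendix is induction on combinatorial complexity of $\mathcal{P}$, using elementary reductions of $G_{\mathcal P}$ such as edge contractions or $\DY$/$\YD$ moves. The base case is the tetrahedron, which is first-order rigid by Dehn's theorem. For the inductive step, let $\mathcal{P}\rightsquigarrow\mathcal{P}'$ be one such reduction and let $P'$ be a rigid realization of $\mathcal{P}'$; one realizes $\mathcal{P}$ as a small ``splitting'' $P_t$ of $P'$ at the contracted edge. The convergence results for Tutte embeddings and the Maxwell--Cremona lift announced in the appendix then guarantee that both the planar embedding and the lifted polyhedron behave continuously under the degeneration $t\to 0$, which in turn transports rigidity from $P'$ to $P_t$ for all sufficiently small $t>0$.

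The principal obstacle in either route is transferring rigidity across the combinatorial jump from $\mathcal{P}'$ to $\mathcal{P}$: openness of first-order rigidity on $\real(\mathcal{P}')$ does \emph{not} apply directly, because the rigidity matrices of different combinatorial types live in different ambient matrix spaces and track different unknowns (in particular, the facet normals change number). One therefore needs precise convergence statements---quantifying how the Tutte embedding, the Maxwell--Cremona lift, and the attendant rigidity matrix of $\mathcal{P}$ deform as an edge shrinks to zero and merges into a lower-complexity structure. This convergence analysis is what I expect the appendix to establish, and it is the genuinely technical heart of the argument.
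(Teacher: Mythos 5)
Your first paragraph matches the paper's reduction: first-order rigidity is a rank condition, $\realCvxCl(\mathcal P)$ is irreducible for $d=3$, and so one rigid realization certifies generic rigidity (this is Lemma~\ref{res:genericity_applied}). Your second (inductive) route is also the approach the paper takes — induction on the size of the polyhedral graph, base case $K_4$, edge contraction for the inductive step, with a contraction sequence constructed via Tutte embeddings and Maxwell–Cremona lifts. So you have found the right skeleton.

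However, there are genuine gaps in how you execute the inductive step. First, your ``direct'' route (taking an arbitrary Tutte $+$ Maxwell–Cremona realization $P^\ast$ and arguing its rigidity via a horizontal/vertical splitting) cannot succeed as stated: the paper stresses that flexible convex polyhedra exist (zonotopes, Minkowski flexes), so there is no Dehn-style statement that some naturally constructed convex realization is always first-order rigid. A specific Tutte lift might well be flexible, and nothing in the horizontal/vertical splitting rules this out for non-simplicial types. Second, and more importantly, in your inductive route you assert that the convergence results for the Tutte/MC pipeline ``transport rigidity from $P'$ to $P_t$ for all sufficiently small $t>0$.'' This is exactly the step that does \emph{not} follow from continuity alone: the paper exhibits (Example~\ref{ex:flexible_to_rigid}) a sequence of first-order flexible polyhedra converging to a first-order rigid one, so ``flexibility of $P_t$ for small $t$ implies flexibility of the limit $P'$'' is false in general. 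To make the transfer go through, the paper needs two additional structural hypotheses: the contracted edge must be \emph{well-contractible} (incident to a non-triangular face, or not incident to a degree-3 vertex), and the limit realization must be \emph{well-shaped}; these feed into a careful argument (Lemma~\ref{res:limit_is_stressed_framework}) showing that a normalized limit of the first-order flexes is again a non-trivial first-order motion of $\tilde P$. Finally, not every polyhedral graph has a well-contractible edge, so the paper also needs a second branch of the induction (the ``stacked $K_4$'' case, handled by an explicit stacking argument) that does not appear in your sketch. These three ingredients — well-contractibility, well-shapedness, and the stacked-$K_4$ fallback — are the substance of the proof and are missing from your proposal.
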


The precise definitions of ``generic'' and  ``generically rigid'' in our setting are~not obvious and we will discuss their subtleties in \cref{sec:generic_def} (culminating in \cref{def:generic,def:generically_rigid}).
A suitable definition of ``generic'' is expected to capture a notion of ``almost all realizations'', or of ``a randomly chosen realization'' of $\mathcal P$.
In analogy to other settings it is furthermore expected that a generic realization is rigid if and only if it is first-order rigid (hence the parentheses in the conjecture).

Our main result is the resolution of \cref{conj:generic} in the special case $d=3$:

\begin{theorem}
    \label{thm:generic_2}
    \label{res:generic_2}
    If $\mathcal P$ is the combinatorial type of a convex polyhedron (that is, $\mathcal P$~is~a polyhedral graph, see \cref{sec:polyhedra}), then $\mathcal P$ is generically (first-order) rigid.
\end{theorem}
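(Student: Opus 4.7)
The plan is to reduce \cref{thm:generic_2} to the problem of exhibiting a single convex realization $P_0$ of $\mathcal P$ that is first-order rigid, and then to construct such a realization using the Maxwell--Cremona correspondence applied to a Tutte embedding. For the reduction, \cref{res:rank_condition} identifies first-order rigidity with $\mathcal R(P)$ attaining corank~$6$, and the locus where a matrix of polynomials in the realization achieves its maximal rank is Zariski open in $\real(\mathcal P)$. Combined with the Legendre--Steinitz theorem --- which asserts that the convex realization space of a $3$-polytope is a smooth manifold of dimension $E+6$ and hence that $\realCvxCl(\mathcal P)$ is irreducible --- the existence of a single first-order rigid convex realization propagates first-order rigidity to a Zariski open dense subset of $\realCvxCl(\mathcal P)$, which is what ``generic rigidity'' ought to mean.

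To construct $P_0$, I would exploit the three-dimensional setting via Steinitz's theorem: $G=G_{\mathcal P}$ is $3$-connected and planar. Fix an outer face $\Delta$, prescribe a strictly convex polygonal realization of $\Delta$ in $\RR^2$, and assign positive weights to all interior edges. Tutte's spring theorem then produces a strictly convex planar embedding $\bs q\colon V\to\RR^2$ of $G$ with interior vertices at weighted barycenters of their neighbors. The interior weights, extended by the induced boundary stress (negative on $\Delta$), form a self-stress $\bs\omega$ on $(G,\bs q)$, and by the Maxwell--Cremona correspondence this self-stress lifts to heights $h\colon V\to\RR$ making each face of $G$ lift to a planar region. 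By Maxwell's classical convexity argument this yields a strictly convex polyhedron $P_0\subset\RR^3$ of combinatorial type $\mathcal P$.

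To verify that $P_0$ is first-order rigid, I would take an arbitrary first-order flex $(\dotbs p,\dotbs a)$ and decompose each vertex velocity as $\dot p_i=(\dot q_i,\dot h_i)$ into horizontal and vertical parts relative to the projection plane. The coupled system \eqref{eq:flex_at_vertex}--\eqref{eq:flex_at_face}, evaluated at $P_0$, should force the vertical field $\dot{\bs h}$ to define a polyhedral lifting of $(G,\bs q)$ compatible with the very stress $\bs\omega$ that produced $P_0$; by the bijectivity in the Maxwell--Cremona correspondence the space of such lifts is finite-dimensional and, for a generic Tutte embedding, exhausted by affine functions of $\bs q$, all of which correspond to trivial motions of $P_0$ (vertical translations and rotations about horizontal axes). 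The residual horizontal field $\dot{\bs q}$ then satisfies the planar constraint system induced by projecting the edge-length equations of $P_0$, and generic infinitesimal rigidity of the Tutte embedding (with its boundary face pinned, which absorbs the remaining trivial degrees of freedom) forces $\dot{\bs q}=0$. Once $\dotbs p=0$, equations \eqref{eq:flex_at_vertex_face} and \eqref{eq:flex_at_face} determine $\dotbs a=0$ as well.

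The main obstacle I expect is the clean decoupling of horizontal and vertical components together with the precise identification of Maxwell--Cremona lifts of a generic Tutte embedding with affine functions of $\bs q$; both steps rely essentially on the fact that $P_0$ is itself a Maxwell--Cremona lift carrying a fixed self-stress. A secondary technical hurdle, apparently addressed in the appendix, is that natural simplifications of $\mathcal P$ (reductions to lower vertex or facet count) proceed by edge contractions, requiring limits to be taken inside both the Tutte embedding and its Maxwell--Cremona lift; continuity of these constructions under such changes of topology is precisely what the appendix provides and what makes the limiting arguments rigorous. Once first-order rigidity of $P_0$ is established, extending to all of $\realCvxCl(\mathcal P)$ is a standard irreducibility argument and \cref{thm:generic_2} follows.
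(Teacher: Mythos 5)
Your reduction step is exactly the paper's: \cref{res:rank_condition} plus irreducibility of $\realCvxCl(\mathcal P)$ for polyhedral graphs reduces generic rigidity to the existence of a single first-order rigid convex realization (this is \cref{res:genericity_applied}). From that point on, however, your argument diverges from the paper's and, as stated, does not go through.

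The gap is in the claim that a first-order flex $(\dotbs p,\dotbs a)$ of the Maxwell--Cremona lift $P_0$ decouples cleanly into a vertical field $\dot{\bs h}$ that is itself a polyhedral lift of $(G,\bs q)$ and a horizontal field $\dot{\bs q}$ satisfying a planar constraint system. The edge-length condition \eqref{eq:flex_at_vertex} at an edge $ij$ of $P_0$ reads
\begin{equation*}
\langle q_i - q_j, \dot q_i - \dot q_j\rangle + (h_i - h_j)(\dot h_i - \dot h_j) = 0,
\end{equation*}
which couples the two components; there is no reason for the vertical part alone to satisfy the defining equations of a lift, nor for the horizontal part alone to satisfy a closed planar system. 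Even setting that aside, your final step asserts that a Tutte embedding with the boundary triangle pinned is infinitesimally rigid as a planar bar-joint framework, forcing $\dot{\bs q}=0$. That is false for non-simplicial $\mathcal P$: a planar Tutte embedding of a polyhedral graph with (say) interior quadrilateral faces is very far from infinitesimally rigid in $\RR^2$, since its edge count falls well short of $2|V|-3$. So there is no first-order rigid $P_0$ to be extracted this way. Indeed, if such a direct construction existed it would essentially be a Dehn's theorem for non-simplicial polyhedra, and the paper points out such a theorem cannot hold, because flexible convex polyhedra exist (\cref{sec:examples}).

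The paper's actual proof avoids trying to exhibit a first-order rigid realization explicitly. It goes by induction on $|V(G)|$: contract a well-contractible edge $e$ to get a smaller polyhedral graph $\tilde G$, take a generic well-shaped realization $\tilde P$ of $\tilde G$ (first-order rigid by the induction hypothesis), build a \emph{contraction sequence} $P^1,P^2,\dots\to \tilde P$ of realizations of $G$ via Tutte embeddings and Maxwell--Cremona lifts with a stress blowing up on $e$ (\cref{res:sequence}), and then show that first-order flexibility would pass to the limit $\tilde P$ (\cref{res:limit_is_stressed_framework}), a contradiction. The appendix's continuity results for Tutte embeddings and Maxwell--Cremona lifts under edge contraction are what justify the convergence $P^n\to\tilde P$; they are not, as your sketch suggests, a secondary hurdle attached to a direct argument, but the backbone of the existence step in the induction.
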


Although \cref{conj:generic_2} remains open for $d\ge 4$, we will discuss potential approaches in \cref{sec:higher_dim}. 

\subsection{Overview of this section}

In \cref{sec:simplicial} we recall generic rigidity of simplicial polyhedra (in particular, Gluck's theorem). 
Based on this we work towards~definitions of ``generic'' and ``generically rigid'' for general combinatorial types in \cref{sec:generic_def} (\cf\ \cref{def:generic,def:generically_rigid}). 
In \cref{sec:polyhedra} we recall the necessary~prerequisites of polyhedral combinatorics and geometry. We shall see that our definition of ``generically rigid'' is especially natural for $d=3$.

The proof of \cref{res:generic} is presented starting from \cref{sec:proof_full}.
Some particularly technical parts of the proof are moved to \cref{sec:sequence,sec:limit_is_stressed_framework}.
Additional intermediate \cref{sec:Tutte_MC,sec:contraction_terminology} recall essential tools, such as Tutte embeddings and the Maxwell-Cremona correspondence.

\subsection{Simplicial polyhedra}
\label{sec:Gluck}
\label{sec:simplicial}


From the perspective of rigidity theory, simplicial~po\-lytopes are merely bar-joint frameworks. Hence, their notion of generic rigidity is directly inherited from the framework setting: a framework is said to be \emph{generic} (or \emph{regular}) if its rigidity matrix has maximal possible rank among all frameworks of the same graph (see for example \cite[p.\ 176]{asimow1979rigidity} or \cite[p.\ 13]{whiteley1996some}).
\textcolor{black}{This notion of ``generic'' is derived from the fact that the rigidity matrix of a framework is, up to a factor, precisely the Jacobian of the underlying polynomial system.
Rank deficiency of the Jacobian is the standard criterion for detecting singular points, away from which the configuration space behaves like a smooth manifold \cite[Section I.5]{hartshorne2013algebraic}.}
Since~rank deficiency~is also an algebraic condition, generic frameworks form a dense open~sub\-set of $(\RR^d)^V$.
A graph $G$ is \emph{generically rigid} if every generic framework of $G$ is~(first-order) rigid.

Once Dehn's theorem is available, a generic version (\ie\ Gluck's theorem) follows quickly (see \cref{res:comparison} to recall the statements of these theorems).

First, recall that a $d$-dimensional bar-joint framework (with at least $d+1$ vertices) is first-order rigid if and only if its rigidity matrix has corank $\binom{d+1}{2}$.
It~is~immediate from our definition of ``generic'' that then either all generic frameworks are first-order rigid, or all frameworks (including non-generic ones) are first-order flexible. 
Nevertheless, by Dehn's theorem we know that the latter cannot be the case for skeleta of simplicial polytopes: all \textit{convex} realizations of $\mathcal P$ (which always exist) are first-order rigid.
We conclude that $\mathcal P$ is generically rigid.

\subsection{Generic rigidity for general polytopes}
\label{sec:generic_def}

The discussion of the simplicial case suggests an analogous definition for generic realizations of a general combinatorial type $\mathcal P$: the realizations that have 
maximal rank among all realizations of $\mathcal P$, where the \Def{rank of a realization} refers to the rank of its associated rigidity matrix.
%
We do however encounter several problems with this straightforward definition.

In contrast to the simplicial case, realization spaces of general polytopes can~be arbitrarily complicated, already for convex polytopes and in dimensions as low as $d=4$ \cite{richter2006realization}.
Most relevant here, the (convex) realization space might be \emph{reducible} as a (semi-)algebraic set.
In this case the maximal rank of the rigidity matrix can be different on each irreducible component, and the realizations of maximal~rank~might no
longer be dense in $\real(\mathcal P)$. 
One option to deal with this is to accept that rigidity is no longer a generic property. 
A second option is to provide a notion of ``generic'' that works ``per irreducible component''. 
We go with the latter:

\begin{definition}
    \label{def:generic}
    A realization $(\bs p,\bs a)\in \real(\mathcal P)$ is \Def{generic} if it
    has maximal rank among all realizations that lie in the same irreducible component(s) of $\real(\mathcal P)$.
\end{definition}

When considering each component~of an irreducible decomposition of $\real(\mathcal{P})$ separately, this definition is consistent with the standard algebro-geometric notion of genericity that is, for example, given in \cite[Definition 5.6]{cox2005ideals}.
In \mbox{particular, the ge}\-neric realizations of $\mathcal P$ form an open and dense subset of $\real(\mathcal P)$. Lastly, \cref{def:generic}~also interacts well with the convex part of the realization space:

%

\begin{remark}
\label{res:dense_convex}
Recall that $\realCvx(\mathcal P)$ is open in its Zariski closure (in the Euclidean sense), and that the intersection of a dense set with a non-empty open set is dense in this open set.
From this we conclude: if $\mathcal P$ has convex realizations at all, then there are convex \emph{generic} realizations among them, and the latter are actually dense in $\realCvx(\mathcal P)$.    
\end{remark}

For defining ``generic rigidity'' we encounter a second problem: for non-simplicial $\mathcal P$ certain ``degenerate realizations'' can form irreducible components of realizations that are first-order flexible. 

\begin{example}
    Let $\mathcal P$ be the combinatorial type of the 3-dimensional cube.
    Consider the set $S\subset\real(\mathcal P)$ of realizations that are \textit{not} affinely spanning and note that here coplanarity constraints are vacuous.
    Hence, the (local) dimension of $S$ is 
    $$V\cdot\overbrace{(d-1)}^{\mathclap{\text{max.\ dimension of $\aff(\bs p)$}}}+ \underbrace{\dim \operatorname{Gr}(d,d-1)}_{\mathclap{\text{ways to choose $\aff(\bs p)$}}}=8\cdot 2+ 3=19,$$
    at almost all points (where $\operatorname{Gr}$ denotes the Grassmannian). 
    This is larger than~the dimension of the convex realization space, which is $E+6=18$ (see also \cite[Example 3.4]{rastanawi2021dimensions}). 
    Consequently, the Zariski closure of $\realCvx(\mathcal P)$ will not contain $S$.

    While we expect (and later prove in \cref{thm:generic}) that convex realizations of $\mathcal P$ are (almost always) first-order rigid, all realizations in $S$ are first-order flexible.
    To construct a flex $(\bsdot p,\bsdot a)$, fix $i\in V$, set $\dot p_j=0$ for all $j\in V\setminus\{i\}$, $\dot a_\sigma = 0$ for all $\sigma\in F$, and choose $\dot p_i\perp \aff(\bs p)$ non-zero and orthogonal to the affine span of $\bs p$.
\end{example}

We did not encounter this problem for simplicial polytopes because for them the realization space $\real(\mathcal P)\simeq (\RR^d)^V$ is irreducible and contains these degenerate~realizations as a proper algebraic (and hence, measure zero) subset.

The solution to this problem is to remove the irreducible components of ``degenerate realizations'' from consideration. 
To avoid a detailed discussion of what makes a realizations degenerate (\eg\ coplanar faces,
confluent vertices, etc.) and since we are mainly interested in convex realizations anyway, we go with the following: 

\begin{definition}
    \label{def:generically_rigid}
    A realization of $\mathcal P$ is \Def{Zariski convex} if it lies in the Zariski closure $\realCvxCl(\mathcal P)$ of the convex realization space.
    $\mathcal P$ is \Def{generically rigid} if each generic~Za\-riski convex realization is (first-order) rigid.
\end{definition}

With this definition of ``generically rigid'' the extent of \cref{res:generic} hinges on the generality of the notion ``Zariski convex''.
Zariski convex realizations include certain non-convex realizations, and we believe they are, in fact, quite general.
Especially in dimension three we consider it as the ``right'' analogue of the non-convexity permitted by Gluck's theorem.
An alternative description of the~class~of~Zariski~convex realizations is currently open.

\subsection{Polyhedra, planar graphs and their realizations}
\label{sec:polyhedra}

The combinatorics and geometry of polyhedra is much better behaved than that of general polytopes.
This is what eventually enables us to prove \cref{conj:generic} for $d=3$.
We recall here the most relevant facts (for details, see \cite[Chapter 4]{diestel2024graph} and \cite[Chapter 4]{ziegler2012lectures}).

The combinatorial type of a polyhedron is determined by its edge graph.
Moreover, \Def{Steinitz's Theorem} states that the graphs that appear as such edge graphs are precisely the 3-connected planar graphs, which are for this reason also known as \Def{polyhedral graphs}.
In the following we will use the graph $G$ instead of $\mathcal P$ to denote the combinatorial type of a polyhedron.

A polyhedral graph has an essentially unique planar embedding, in particular,
the notion of \Def{face} (as a connected component of the complement of the embedding; respectively its boundary cycle in the graph) is well-defined for the graph without specifying an embedding.
We will write polyhedral graphs as triples $G=(V,E,F)$ with vertex set~$V$, edge set $E$, and face set $F$.

In contrast to general polytopes, realization spaces of polyhedra are well-under\-stood.
Most relevant to us, $\realCvx(G)$ is an \emph{irreducible} semi-algebraic set, and~consequently, $\realCvxCl(G)$ is an \emph{irreducible} algebraic set (see \cite[Corollary 4.10]{rastanawi2021dimensions}).\nls
These simplified circumstances allow us to apply Gluck's trick almost verbatim:

\begin{lemma}
    \label{res:genericity_applied}
    Given a polyhedral graph $G$, the following are equivalent:
    \begin{myenumerate}
        \item $G$ is generically rigid (in the sense of \cref{def:generically_rigid}).
        \item there exists a convex realization of $G$ that is first-order rigid.
        \item there exists a Zariski convex realization of $G$ that is first-order rigid.
    \end{myenumerate}
\end{lemma}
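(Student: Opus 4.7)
The plan is to adapt Gluck's trick, with the irreducibility of $\realCvxCl(G)$ as the crucial new input. By \cref{res:rank_condition}, first-order rigidity of $P \in \realCvxCl(G)$ is equivalent to $\rank \mathcal R(P)$ attaining its theoretical maximum $r^* := d(V+F) - \binom{d+1}{2}$. Since rank-deficiency is Zariski-closed (cut out by the simultaneous vanishing of all $r^* \times r^*$ minors), on the irreducible variety $\realCvxCl(G)$ the locus of first-order rigid realizations is either empty or Zariski-open and dense.

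For direction (ii) $\Rightarrow$ (i): if some Zariski convex $P_0$ achieves $\rank \mathcal R(P_0) = r^*$, then the maximal rank on $\realCvxCl(G)$ --- and on the irreducible component of $\real(G)$ containing it --- equals $r^*$. By \cref{def:generic}, every generic Zariski convex realization attains this maximum, hence is first-order rigid, which is exactly generic rigidity as in \cref{def:generically_rigid}. Conversely, for (i) $\Rightarrow$ (ii): Steinitz's theorem guarantees $\realCvx(G) \neq \varnothing$, so $\realCvxCl(G)$ is a non-empty irreducible variety in which the generic realizations form a non-empty Zariski-open subset. By hypothesis any such realization is first-order rigid, yielding the desired example.

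The only subtle point is reconciling \cref{def:generic}, which is phrased in terms of the irreducible components of $\real(G)$, with the argument above, which takes place inside $\realCvxCl(G)$. This is handled by observing that $\realCvxCl(G)$ is in fact itself an irreducible component of $\real(G)$ in dimension three: $\realCvx(G)$ is a smooth manifold of the expected dimension $|E|+6$ by Legendre--Steinitz, so no strictly larger irreducible closed subset of $\real(G)$ can contain it. With this identification, ``generic in the sense of \cref{def:generic}'' coincides with ``attaining maximal rank in the irreducible variety $\realCvxCl(G)$'', and the two directions above go through cleanly. This harmonization is the main (and essentially the only) obstacle in the proof; the rest is the formal extraction of Gluck's argument, now applicable precisely because polyhedra enjoy an irreducible convex realization space.
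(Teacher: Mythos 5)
Your proof is correct and follows the same Gluck-style strategy as the paper, hinging on the irreducibility of $\realCvxCl(G)$. A few remarks on the ``harmonization'' step, which you single out as the main obstacle.

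For \itm2$\Rightarrow$\itm1, you do not actually need $\realCvxCl(G)$ to be an irreducible \emph{component} of $\real(G)$; irreducibility alone suffices. Since $\realCvxCl(G)$ is irreducible, it sits inside a single irreducible component $C$ of $\real(G)$. A generic Zariski convex realization $P$, per \cref{def:generic}, has maximal rank over every component of $\real(G)$ containing it --- in particular over $C$, hence over $\realCvxCl(G) \subseteq C$, and hence $\rank P \ge \rank P_0 = r^*$. This is what the paper does (its phrase ``irreducible component(s) of $\realCvxCl(G)$'' is a shorthand for ``the component of $\real(G)$ containing $\realCvxCl(G)$''), and it is slightly cleaner than invoking the component identification.

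Where your stronger claim genuinely earns its keep is in \itm1$\Rightarrow$\itm2: one needs a generic realization that is also Zariski convex, which is not automatic if $\realCvxCl(G)$ were a \emph{proper} subvariety of $C$ --- the maximal rank over $C$ might then be attained only off $\realCvxCl(G)$. The paper dismisses this direction as ``clear from definition'' and implicitly assumes generic Zariski convex realizations exist; your component claim would supply exactly this. However, your justification is thinner than it should be: $\realCvx(G)$ being a smooth manifold of dimension $|E|+6$ does not by itself preclude a higher-dimensional irreducible subvariety of $\real(G)$ passing through a convex realization. What you need (and what the Legendre--Steinitz analysis does in fact provide) is that $\real(G)$ has \emph{local} dimension $|E|+6$ at every convex point, i.e.\ that $\realCvx(G)$ is a relatively open subset of $\real(G)$ carrying the full local dimension; you should state that intermediate fact rather than jump directly from ``smooth of expected dimension'' to maximality. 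With that sentence added, the argument closes cleanly.
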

\begin{proof}
    By Steinitz's Theorem, $G$ has a convex realizations
    and hence \itm1$\implies$\itm2 is a consequence of \cref{res:dense_convex}.
    The implication \itm2$\implies$\itm3 is clear.
    It remains to show $\neg$\itm1 $\Longrightarrow\neg$\itm3.

    Suppose that $G$ is \textit{not} generically rigid.
    Then there is a generic Zariski convex~realization $(\bs p,\bs a)\in\realCvxCl(G)$ that is first-order flexible.
    By \cref{res:rank_condition} the~corank of $(\bs p,\bs a)$ is larger than six.
    Since $(\bs p,\bs a)$ is generic, the corank of all realizations~in~the same irreducible component(s) of $\realCvxCl(G)$ is larger than six as well. 
    For polyhedral graphs 
    $\realCvxCl(G)$ is irreducible, and so this lower bound applies to all Zariski convex realizations of $G$.  
    Applying \cref{res:rank_condition} again yields that all Zariski convex realizations are first-order flexible. This shows $\neg$\itm3.
 \end{proof}

There are further specifically 3-dimensional tools that we will make use of in~the proof of \cref{res:generic}, most prominently, Tutte embeddings and the Maxwell-Cre\-mona correspondence. 
They are introduced later in \cref{sec:Tutte_MC}.

\subsection{Proof of \cref{thm:generic}}
\label{sec:proof_outline}
\label{sec:proof_full}

Throughout this section, we fix a polyhedral graph $G=(V,E,F)$.
To show that $G$ is generically rigid, by \cref{res:genericity_applied} it suffices~to~find a~single convex (actually, Zariski convex) realization of $G$ that is first-order rigid.
In contrast to the simplicial case, however, we do not have a counterpart of Dehn's theorem, that is, we cannot just point to an arbitrary convex realization. In fact, such a counterpart cannot exist since we already know that there are flexible polyhedra (\cf\ \cref{sec:examples}).

Instead we proceed by induction on the size of $G$, establishing the existence~of a first-order rigid realization by using the generic rigidity of all smaller polyhedral graphs.
The general strategy is rather simple, but there are several technical steps in between. 
We therefore first sketch the proof, then fill in the details step~by step, before we present the full proof of \cref{res:generic} in the end of this~section. Some particularly technical parts are postponed to \cref{sec:sequence} and \cref{sec:limit_is_stressed_framework}. 

The core of the proof is the induction step which is reminiscent of a ``reverse vertex-splitting argument'': for the following sketch, suppose that generic rigidity has been established for all polyhedral graphs with fewer vertices or edges than $G$.
Then proceed as follows:

\begin{enumerate}
    \setlength{\itemsep}{0.5ex}
    \item 
    \label{it:outline_e}
    Choose an edge $e\in E$ so that $\tilde G := G/e$ is still a polyhedral graph but with fewer vertices and edges than $G$. Here $G/e$ denotes \Def{edge contraction}, that is, we remove $e$, identify its former end vertices, and we delete any parallel edges that are created in the process.
    \item 
    \label{it:outline_P}
    Choose a generic convex polyhedral realization $\tilde P$ of $\tilde G$. By induction~hypo\-thesis we know that $\tilde P$ is first-order rigid. 
    \item 
    \label{it:outline_seq}
    Choose a sequence $P^1\!,P^2\!,P^3\!,...\subset\RR^3$ of convex polyhedral realizations of $G$ that~converges to \smash{$\tilde P$}.
    If, for sake of contradiction, we assume that there are no first-order rigid realizations of~$G$, then all $P^n$ are first-order flexible.
    \item 
    \label{it:outline_limit}
    Show that the limit of a sequence of first-order flexible polyhedra is~it\-self first-order flexible.
    Thus, if $P^n$ is first-order flexible for all $n\ge 1$,\nls then \smash{$\tilde P$}~is first-order flexible as well.
    \item 
    But $\tilde P$ was first-order rigid by the induction hypothesis. This yields a con\-tradiction. Hence, some realization $P^n$ must have been first-order rigid. 
    \item         
    Using \cref{res:genericity_applied}, $G$ is therefore generically rigid.
\end{enumerate}

Many steps in the above outline require further technical~elaborations or adjustments. We go through them step by step.

First, for \eqref{it:outline_e} we require that $G$ has an edge $e\in E$ that after contraction leaves us with a polyhedral graph $\smash{\tilde G=(\tilde V,\tilde E,\tilde F):=G/e}$.
This is guaranteed by a classical theorem due to Tutte:

\begin{theorem}[{\cite[Lemma 3.2.4.]{diestel2024graph}}]
    \label{res:Diestel}
    If $G$ is a 3-connected graph other than $K_4$, then there is an edge $e\in E(G)$ so that $G/e$ is still 3-connected.
\end{theorem}

We shall call such an edge \Def{contractible}.
In the following let $e\in E$ be such~a~contractible edge.
Since planarity is also preserved under edge contractions, we~conclude that $\tilde G:= G/e$ is still polyhedral. 

We encounter a first technical point: later (in step \eqref{it:outline_limit}) we need that $e$ is not~only contractible, but \Def{well-contractible}, which shall mean
\begin{myenumerate}
    \item $e$ is incident to a non-triangular face, or
    \item $e$ is \ul{not} incident to a vertex of degree three. 
\end{myenumerate}
Not every polyhedral graph has a well-contractible edge (see \cref{fig:not_well_contractible}).
For the~case that it has none we provide a lemma of alternatives. 
For this, say that a polyhedral graph $G$ has a \Def{stacked $K_4$} if it has a vertex $i\in V$ of degree three that is incident to three triangular faces (note that then $i$ and its neighbors form a $K_4$).

\begin{figure}
    \centering
    \includegraphics[width=0.35\linewidth]{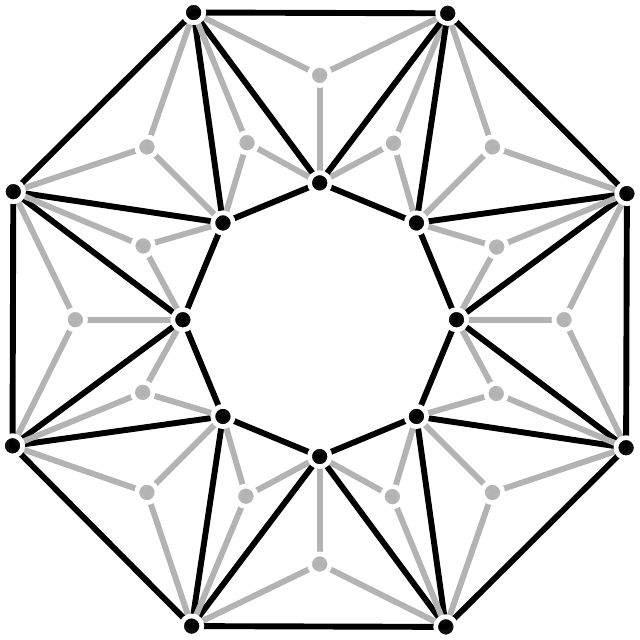}
    \caption{The figure shows the edge graph of an octagonal antiprism with a stacked $K_4$ on each triangular face. Note that through the stacking the edges of the antiprism are not contractible, whereas the edges of the stacked $K_4$'s shown in gray are contractible but not well-contractible.}
    \label{fig:not_well_contractible}
\end{figure}

\begin{lemma}
    \label{res:contraction_cases}
    For a polyhedral graph $G$, at least one of the following is true:
    \begin{myenumerate}
        \item $G=K_4$,
        \item $G$ has a well-contractible edge, or
        \item $G$ has a stacked $K_4$.
    \end{myenumerate}
\end{lemma}
\begin{proof}
    If $G\not= K_4$ then by \cref{res:Diestel} there is a contractible edge $\hatihatj\in E(G)$.
    Suppose that $\hatihatj$ 
    is not well-contractible, that is, $\hatihatj$ is incident to two triangles $\sigma_1,\sigma_2$, and incident to a vertex of degree three. W.l.o.g. we may assume that $\hati$ is the  degree three vertex.
    Let $i_k$ be the vertex of $\sigma_k\setminus\hatihatj$ for $k\in\{1,2\}$.
       The vertex $\hati$ is incident to three faces, two of which are $\sigma_1$ and $\sigma_2$. Let $\tau$ be the third face. 
    If $\tau$ is a triangle, then $\hati$, together with $\sigma_1$, $\sigma_2$ and $\tau$ form a stacked $K_4$, and \itm3  holds.
    If $\tau$ is not a triangle, then we claim that $\hati i_1$ is contractible, hence satisfies  \itm2.
    Suppose $\hati i_1$ is not contractible. Then $G/\hati i_1$ is 2-connected, and each 2-separator must use the contraction vertex and a third vertex, say $j$.
    Hence, $\hati$, $i_1$ and $j$ form a 3-separator of $G$. 
    The vertex $\hati$ must then have neighbors in different connected components of $G\setminus\{\hati,i_1,j\}$.
    But $\hati$ has only three neighbors, one of them, $i_1$, is in the separator,\nls and the other two, $i_2$ and $\hatj$, are adjacent.
    This yields a contradiction.
\end{proof}

In the following we shall assume that $e$ is well-contractible. The other case will be dealt with once we give the full proof of \cref{res:generic}.

Following the outline of the proof, we now choose a generic convex realization~$\tilde P$ of $\tilde G$ (which exists by \cref{res:dense_convex}).
In preparation for step \eqref{it:outline_limit} we once again need to make a technical assumption about our choice:
if $e$ is incident to precisely one non-triangular face $\sigma$ in $G$, then we require $\tilde P$ to 
lie ``above'' $\sigma$.
This means that the orthogonal projection of each vertex of $\tilde P\setminus\sigma$ onto $\sigma$ ends up in the relative interior of $\sigma$ (see \cref{fig:well_shaped}).
We call a convex realization $\tilde P$ \Def{well-shaped} if this is~the case.
Note that this can always be arranged using a projective transformation: choose a point $x$ outside $P$ but ``close to $\sigma$''; a suitable projective transformation now fixes $\sigma$ and moves $x$ to the infinite point orthogonal to $\sigma$ (this is a standard~step~in~the construction of Schlegel diagrams for polytopes, see for example \cite[Exercise 2.18]{ziegler2012lectures}).

\begin{figure}
    \centering
    \includegraphics[width=0.5\linewidth]{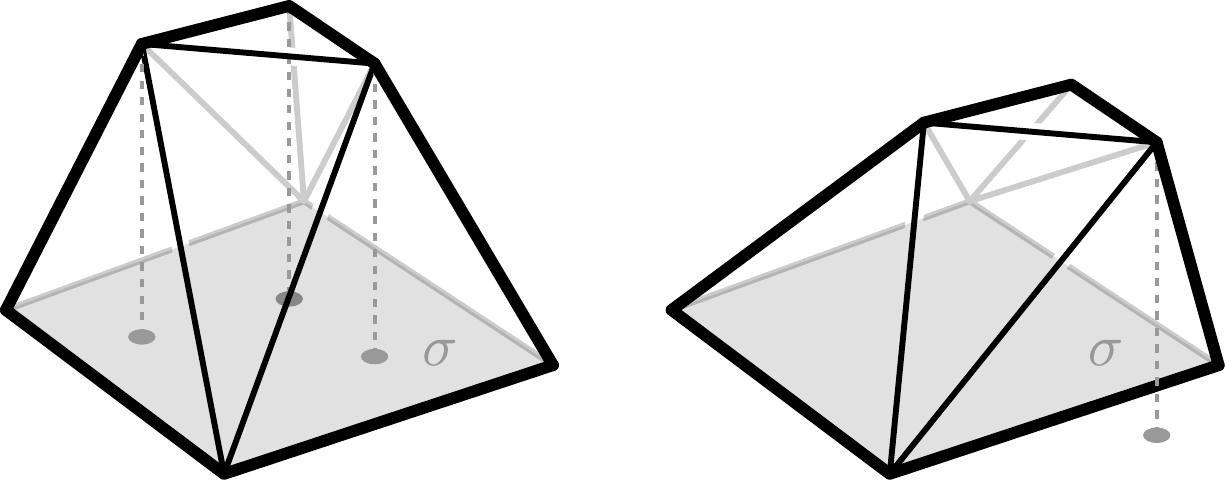}
    \caption{A well-shaped realization (left) and not well-shaped realization (right).}
    \label{fig:well_shaped}
\end{figure}

Moreover, since being well-shaped is clearly stable under small perturbations of the realization, we can guarantee that $\tilde P$ is both well-shaped and generic.
For later reference we state this as a proposition:

\begin{proposition}
    \label{res:well_shaped}
    There exists a convex realization $\tilde P$ of $\tilde G$ that is both generic and well-shaped.
\end{proposition}

In the following we assume that $\tilde P$ is a well-shaped generic realization of $\tilde G$.

In step \eqref{it:outline_seq} of the outline we choose a sequence $P^1\!,P^2\!, P^3\!,...$ of convex realizati\-ons of $G$ that converges to $\smash{\tilde P}$.
For the precise definition of convergence we~refer~to \cref{sec:sequence}. 
The existence of this sequence is not obvious, and is actually~very specific to dimension three.
The proof builds on Tutte embeddings and the Maxwell-Cremona correspondence (which we recall in \cref{sec:Tutte_MC}), as well as the limit behavior of these tools under topology changes (something that seems not available in the literature, see \cref{sec:appendix_MC}).
Due to this complexity, we moved the proof of existence to its own section (\cref{sec:sequence}).
Below we refer to it in the form of the following lemma:

\begin{lemma}
    \label{res:sequence}
    Given a contractible edge $e\in E$ and a convex polyhedral realization $\tilde P$ of \smash{$\tilde G$} $:= G/e$.
    There exists a sequence $P^1,P^2,$ $P^3,...\subset\RR^3$ of convex polyhedral realizations of $G$ with $P^n\to \tilde P$. 
\end{lemma}

The sequence given by \cref{res:sequence} will be called a \Def{contraction sequence} for the edge $e$.

Step \eqref{it:outline_limit} (the limit transfer of first-order flexibility) has a very technical proof as well.
We moved it to \cref{sec:limit_is_stressed_framework}.
It is here that we need our technical assumptions, that $e$ is well-contractible and that $\tilde P$ is well-shaped.
We refer to it~using~the~fol\-lo\-wing lemma:

\begin{lemma}
    \label{res:limit_is_stressed_framework}
    If $G$ is polyhedral, $e\in E$ is a well-contractible edge, $\tilde P$ is a well-shaped convex realization of $\tilde G:=G/e$, and $P^1,P^2,P^3,...$ $\to \tilde P$ is a contraction~sequence of first-order flexible convex polyhedra, then $\tilde P$ is first-order flexible as~well.
\end{lemma}

The remaining steps of the outline are straightforward. 
We now collect all these pieces to present the full proof of the main result:

\begin{theoremX}{\ref{res:generic_2}}
    If $G$ is the combinatorial type of a 3-polytope (\ie\ $G$ is a polyhedral graph), then it is generically (first-order) rigid.
\end{theoremX}
\begin{proof}
    The proof proceeds by induction on the number of vertices of $G$.%
    
    The minimal number of vertices of a polyhedral graph is attained for $G=K_4$. Its polyhedral realizations are tetrahedra, which are always first-order rigid (for~example due to Dehn's theorem, \cf\ \cref{res:comparison}).
    If $G$ has more than four vertices, then 
    by \cref{res:contraction_cases} there are two cases to consider.
    In both cases we construct a first-order rigid convex realization of $G$ and in this way show that $G$ is generically rigid by applying \cref{res:genericity_applied} \itm2$\implies$\itm1.

    \emph{Case 1: $G$ has a well-contractible edge $e$.}
    Then $\tilde G:= G/e$ is polyhedral with~fewer vertices than $G$. 
    By \cref{res:well_shaped} there is a convex realization $\tilde P$ of $\tilde G$ that is both generic and well-shaped.
    Since by induction hypothesis $\tilde G$ is generically rigid, $\tilde P$ is first-order rigid.
    By \cref{res:sequence} there is a contraction sequence $P^1\!,P^2\!,P^3\!,...$ of convex realizations of $G$ with $P^n\to\tilde P$.
    Suppose, for the sake of contradiction, that all $P^n$ are first-order flexible. 
    Since $\tilde P$ is well-shaped and 
    $e$ is well-contractible,\nls we can apply \cref{res:limit_is_stressed_framework} to conclude that $\tilde P$ is first-order flexible. 
    This is in contra\-diction to its initial choice. 
    We conclude that the sequence $P^n$ must contain first-order rigid polyhedra.

    \emph{Case 2: $G$ has a stacked $K_4$.}
    Let $i\in V(G)$ be the corresponding 3-vertex.
    Then $\tilde G:= G-i$ is still polyhedral with a distinguished triangular face $\Delta$. Since $\tilde G$ has fewer vertices than $G$, by induction hypothesis there is a first-order rigid convex realization $\tilde P$ of $\tilde G$.
    We obtain a convex realization $P$ of $G$ by stacking a sufficiently flat tetrahedron onto the face $\Delta$ of $\tilde P$. 
    We show that $P$ is first-order rigid:
    suppose that $(\dotbs p,\dotbs a)$ is a first-order motion of $P$.
    Since tetrahedra are first-order rigid, even as bar-joint frameworks, adding a suitable trivial motion to $\dotbs p$ ensures $\dot p_j=0$ for all $j\in V(\Delta)\cup\{i\}$.
    Let $\smash{(\bsdot p,\bsdot a)_{\tilde G}}$ be the restriction of this first-order motion to $\tilde G$.
    This yields a first-order motion of $\tilde P$ and must be trivial since $\tilde P$ is first-order rigid.
    But a trivial motion that vanishes on the triangular face $\Delta$ must be zero.
    Hence $(\dotbs p,\dotbs a)$ is zero.
    This shows that $P$ is first-order rigid.
\end{proof}

It remains to prove \cref{res:sequence} and \cref{res:limit_is_stressed_framework}.
We first recall some necessary tools and introduce suitable notation.

\subsection{Tutte embeddings and the Maxwell-Cremona correspondence}
\label{sec:TMC}
\label{sec:Tutte_MC}

Tutte's embedding theorem together with the Maxwell-Cremona correspondence constitute a powerful toolbox for controlling convex polyhedral realizations.
A great~source~dis\-cussing these tools is \cite[Section 3.2.1 and 3.2.2]{lovasz2019graphs}.
We recall the essentials.

A \Def{stress} on a framework $(G,\bs p)$ is a map $\bs\omega\: E\to\RR$. It is a \Def{self-stress} (sometimes \Def{equilibrium stress}) if at each vertex it satisfies the \emph{stress equilibrium condition}:
\begin{equation}
\label{eq:stress}
\sum_{\mathclap{j:ij\in E}} \omega_{ij}(p_j-p_i)=0,\quad\text{for all $i\in V$}.    
\end{equation}
A framework $(G,\bs p)$ together with a self-stress $\bs\omega$ we shall call a \Def{self-stressed framework} $(G,\bs p,\bs \omega)$.
A map $\hatbs\omega\:\hat E\to\RR$ defined on a subset of edges $\hat E\subset E$ we shall call a \Def{partial stress}. 

\begin{theorem}[Tutte embedding \cite{tutte1963draw}]
\label{res:Tutte}
\label{res:tutte}
Let $G=(V,E,F)$ be a polyhedral graph with a triangular face $\Delta\in F$.
For each positive partial stress \mbox{$\hatbs\omega\:E\setminus\Delta\to\RR_+$}~there exists a unique (up to linear transformations) 2-dimensional self-stressed framework $(G,\bs v,\bs \omega)$ whose self-stress $\bs\omega$ extends $\hatbs\omega$.
Moreover, $(G,\bs v)$ is a planar straight-line drawing (\ie\nls no two edges are crossing) with $\Delta$ as its outer face and all other faces convex.
\end{theorem}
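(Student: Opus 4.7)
The plan is to fix the boundary triangle first and then obtain the interior vertex positions as the unique minimum of a strictly convex quadratic energy, from which the self-stress falls out almost for free. First, I would place the three vertices of $\Delta$ at any three non-collinear points in $\RR^2$. Since any two non-degenerate triangles are related by a unique affine map, this pinning accounts exactly for the "up to linear transformations" ambiguity, and reduces the problem to one with fixed boundary. For the interior vertices I would introduce the energy
$$E(\bs v) = \tfrac{1}{2}\sum_{ij\in E\setminus\Delta}\hat\omega_{ij}\|v_i-v_j\|^2.$$
Because $\hat{\bs\omega}>0$, $G$ is $3$-connected (hence every interior vertex has a path to $\Delta$), and the boundary coordinates are fixed, this is a coercive, strictly convex quadratic in the free coordinates and admits a unique global minimizer $\bs v$.

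Setting $\nabla_{v_i}E=0$ at each interior vertex $i$ yields precisely the equilibrium condition $\sum_{j:ij\in E}\hat\omega_{ij}(v_j-v_i)=0$, so $\hat{\bs\omega}$ is automatically in equilibrium at every interior vertex. To upgrade $\hat{\bs\omega}$ to a full self-stress, I still need to define $\omega$ on the three $\Delta$-edges so that equilibrium also holds at the three corners of $\Delta$. At a corner $i\in V(\Delta)$, the force contributed by edges in $E\setminus\Delta$ is some fixed vector in $\RR^2$, which I must balance by a linear combination of the two $\Delta$-edge directions emanating from $i$; these directions are linearly independent because $\Delta$ is non-degenerate, so the two $\omega$-values on those edges are uniquely determined. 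Performing this at all three corners over-determines the three values $\omega_\Delta$, but consistency follows from the fact that the total force and total torque on the whole framework vanish (both sums being antisymmetric).

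For planarity and convexity of the faces, I would appeal to Tutte's classical argument, of which the key observation is immediate from the construction: at any interior vertex the equilibrium equation expresses $v_i$ as a strict positive convex combination of its neighbors, so $v_i$ lies in the strict relative interior of $\conv\{v_j:j\sim i\}$. Combined with $3$-connectivity this rules out any degenerate configuration: one shows that no line can have all interior vertices weakly on one side (those on the line would be forced, via the strict convex-combination property, to have neighbors on both sides, and an extremal such vertex yields a contradiction). From this one derives the standard "no bad cut" statements, which in turn imply that every face is drawn as a non-degenerate convex polygon and that no two face interiors overlap, giving the planar straight-line drawing with $\Delta$ as the outer face.

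The main obstacle is the last step. Existence and uniqueness of $(\bs v,\bs\omega)$, and the fact that the partial stress extends uniquely, are essentially immediate from strict convexity and a small linear-algebra computation on $\Delta$. The planarity/convex-face assertion, however, is the geometric heart of Tutte's theorem: it is sensitive to the $3$-connectivity hypothesis (without it, convex faces can fail) and requires a careful combinatorial/topological argument. In the writeup I would most likely invoke Tutte's original theorem for this step rather than reproduce the full "no bad cut" analysis.
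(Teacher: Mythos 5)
The paper does not prove this theorem; it cites it directly to Tutte's 1963 paper (and points the reader to L\'ovasz's lecture notes for background), so there is no in-paper proof to compare against. Your proposal reproduces the standard energy-minimization proof of Tutte's embedding theorem, and the outline is correct: pin the boundary triangle, minimize the positive-definite quadratic energy to get the interior positions, read off the equilibrium conditions at interior vertices from $\nabla E = 0$, solve a $2\times2$ system at each boundary corner to extend the stress to the $\Delta$-edges, and invoke the ``no bad cut'' analysis for planarity and convexity.

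Two small points worth tightening if you were to write this up in full. First, the statement says ``up to linear transformations'' while your pinning argument really uses the $6$-dimensional affine group (two non-degenerate triangles are related by a unique \emph{affine} map, not a unique linear one); since self-stresses and the equilibrium conditions are translation-invariant this is only a terminological mismatch, but it deserves a sentence. Second, your consistency argument for the over-determined system on the three $\Delta$-edges (``total force and total torque vanish'') is the right idea but is stated too quickly: one should note that the equilibrium equations at interior vertices already imply that the net force transmitted by $E\setminus\Delta$ to the three boundary vertices sums to zero and has zero net torque, which is exactly the $3$-dimensional space of relations needed to cut the $6$ corner equations down to $3$ independent constraints on the $3$ unknown boundary stresses. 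With that, the extension is uniquely determined, and the rest follows as you indicate by deferring to the classical Tutte argument for the planarity and face-convexity claims.
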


A \Def{polyhedral lifting} of a 2-dimensional framework $(G,\bs p)$ is a triple $(G,\bs p,\bs h)$ with a \Def{height function} $\bs h\:V\to\RR$ so that for each face $\sigma\in F$ the points $(p_i,h_i)\in\RR^3,i\sim\sigma$ lie on a common plane.


\begin{theorem}[Maxwell-Cremona correspondence]
\label{res:Maxwell_Cremona}
\label{res:MC}
\label{res:maxwell_cremona}
\label{res:MCC}
Given a polyhedral graph $G$ and a 2-dimensional framework $(G,\bs v)$, there is a one-to-one correspondence between 
\begin{myenumerate}
    \item self-stressed frameworks $(G,\bs v,\bs \omega)$, and
    \item polyhedral liftings $(G,\bs v,\bs h)$ (up to certain projective transformations; or~uni\-quely if we prescribe $h_i=0$ for all vertices $i\sim\sigma$ on a fixed face $\sigma\in F$).
\end{myenumerate}
Moreover, if $\omega_e<0$ for all edges of some fixed face $\sigma\in F$, and $\omega_e>0$ for all other edges, then $(G,\bs v,\bs h)$ yields a convex realization of $G$.
\end{theorem}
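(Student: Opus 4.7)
The plan is to build the correspondence concretely through the per-face gradient of the lifting. Given a polyhedral lifting $(G,\bs v,\bs h)$, each face $\sigma$ determines a unique affine function $\ell_\sigma(x)=q_\sigma\cdot x+c_\sigma$ on $\RR^2$ satisfying $\ell_\sigma(v_i)=h_i$ for all $i\sim\sigma$; here the gradient $q_\sigma$ lies in $\RR^2$. For two faces $\sigma,\tau$ sharing an interior edge $ij$, both $\ell_\sigma$ and $\ell_\tau$ take the values $h_i,h_j$ at $v_i,v_j$, so $\ell_\sigma-\ell_\tau$ vanishes on the line through $v_i,v_j$. Hence $q_\sigma-q_\tau$ is perpendicular to $v_j-v_i$, and there is a unique scalar $\omega_{ij}$ with
$$q_\sigma-q_\tau=\omega_{ij}\,R(v_j-v_i),$$
where $R$ denotes rotation by $\pi/2$. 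Walking cyclically around an interior vertex $i$ through its incident faces, the sum of these differences telescopes to zero, giving $R\sum_{j\sim i}\omega_{ij}(v_j-v_i)=0$ and hence the equilibrium condition. This produces the forward map from liftings to self-stresses.

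For the reverse map, given a self-stress $\bs\omega$ on $(G,\bs v)$, I fix a base face $\sigma_0$ together with arbitrary initial values $q_{\sigma_0}\in\RR^2$ and $c_{\sigma_0}\in\RR$, and propagate $q$ through the dual graph by the rule $q_\tau:=q_\sigma-\omega_{ij}R(v_j-v_i)$ across each interior dual edge. Path-independence is precisely the equilibrium condition around each interior primal vertex (a cocycle identity), so the $q_\sigma$ are well-defined since $G$ is $2$-connected. The constants $c_\sigma$ are then determined by requiring $\ell_\sigma(v_i)=\ell_\tau(v_i)$ at one vertex shared by $\sigma,\tau$; the gradient condition then forces agreement at the remaining shared vertices. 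Setting $h_i:=\ell_\sigma(v_i)$ for any incident face yields a lifting, and the two constructions are visibly mutually inverse. The $3$-parameter freedom in the backward direction — namely the choice of $q_{\sigma_0}\in\RR^2$ and $c_{\sigma_0}\in\RR$ — corresponds exactly to modifying $\bs h$ by an affine function $h_i\mapsto h_i+a\cdot v_i+b$, which is a shear of the lifting. Prescribing $h_i=0$ at the three vertices of the fixed triangular face $\sigma$ imposes three independent linear conditions and pins the lifting uniquely.

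It remains to verify the convexity assertion. Strict local convexity from above at an interior edge $ij$ is equivalent to the inequality $\ell_\sigma(x)<\ell_\tau(x)$ for $x$ lying in face $\tau$ near $ij$; substituting $q_\sigma-q_\tau=\omega_{ij}R(v_j-v_i)$ shows that this is equivalent to $\omega_{ij}$ having a definite sign determined by the cyclic orientation of $\sigma,\tau$ around $ij$ in the planar drawing. With the outer face chosen as the one carrying negative stress on all its boundary edges, the condition $\omega_e>0$ on every interior edge yields strict dihedral convexity across each interior edge. A standard local-to-global argument (each vertex lies on the upper envelope of its incident face planes, and $3$-connectedness rules out pathologies) upgrades this to strict global convexity, and projecting via $i\mapsto(v_i,h_i)$ produces a convex polyhedral realization of $G$ in $\RR^3$. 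The main obstacle in a self-contained writeup is this convexity step, where signs must be tracked carefully relative to the planar embedding; the bijection itself is a short linear-algebra calculation once the per-face affine functions $\ell_\sigma$ are introduced as the organizing object.
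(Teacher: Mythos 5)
The paper does not prove this theorem; it recalls it as a classical result and points the reader to Lovász~\cite[Section~3.2.1--3.2.2]{lovasz2019graphs}. So there is no "paper's proof" to compare against. Your sketch reconstructs the standard argument, and it is essentially correct. In fact it matches the viewpoint the paper silently adopts in \cref{sec:appendix_reciprocal}: your per-face gradients $q_\sigma$ are exactly the reciprocal-diagram vertices $\bs w\colon F\to\RR^2$ that the appendix uses as the intermediate object between stresses and lifts, and your relation $q_\sigma-q_\tau=\omega_{ij}R(v_j-v_i)$ is the paper's \eqref{eq:to_reciprocal} while $h_j-h_i=\langle q_{\sigma(ij)},v_j-v_i\rangle$ is its \eqref{eq:to_lift}.

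A few small remarks on the writeup. First, you pin the lift by setting $h_i=0$ at the three vertices of a \emph{triangular} face, but the statement allows any face $\sigma$; the same argument works provided the vertices of $\sigma$ affinely span $\RR^2$, which is the natural nondegeneracy assumption anyway (it is also needed for $\ell_\sigma$ to be uniquely determined). Second, your telescoping around an interior vertex needs a consistent orientation convention for the dual edges, otherwise the signs of the $\omega_{ij}$ in the equilibrium sum do not line up; this is routine but worth stating. Third, the well-definedness of the constants $c_\sigma$ (not just the gradients) along cycles of $G^\star$ deserves a sentence: going around a vertex $i$, all $\ell_\sigma$ with $\sigma\sim i$ are forced to agree at $v_i$, so the cocycle closes trivially; combined with the observation that gradient-perpendicularity propagates agreement from $v_i$ to $v_j$ along a shared edge $ij$, this gives the consistency you assert. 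Finally, as you acknowledge, the convexity clause requires the 2D framework to be an actual non-crossing planar drawing with convex faces (e.g.\ a Tutte embedding or Schlegel diagram); as stated for a general 2D framework, the local-to-global step does not apply. This is a gap in the theorem statement itself as given in the paper, not in your argument, and the paper's only application is after a Tutte embedding where the hypothesis is satisfied.
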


For constructing the contraction sequence $P^n\to \tilde P$ in \cref{sec:sequence} we will use~that both Tutte embeddings and Maxwell-Cremona lifts behave well under edge contraction.
The precise meaning will be made clear at first use. The proofs are~straightforward but tedious and distract from the main proof. 
They are given in \cref{sec:topology_changes}.

\subsection{Contraction-only minors}
\label{sec:contraction_terminology}

This section introduces a language that is slightly more general than needed but will enable us to be more concise later on.

If $G$ is a graph, a \Def{contrac\-tion-only minor} $\tilde G\le G$ is a graph $\tilde G=(\tilde V,\tilde E)$ obtained from $G$ by contrac\-ting some of the edges, but \emph{not} by deleting edges.
Equivalently, $\tilde G=G/{\sim}_{\mathrm c}$ is a quotient \wrt\ an equivalence relation ${\sim}_{\mathrm c}$ on the vertices (the~``c'' is for ``contraction'') where each equivalence class induces a connected subgraph of $G$.
We shall use the vertices of $\tilde G$ interchangeably with these equivalence classes, which we denote by capital letters $I,J\in \tilde V$.
In the case of a single edge contraction $\hati\hatj\to\hatij$ (the most relevant case for us), the~contraction vertex~$\hatij$~cor\-responds to a class $I=\{\hati,\hatj\}$, and all other classes are singletons. Edges in $\tilde G$ are denoted~by~$IJ$ and we identify them with the set $\{ij\in E\mid i\in I\text{ and }j\in J\}$. 

Suppose now that both $G=(V,E,F)$ and $\tilde G=(\tilde V,\tilde E,\tilde F)$ are polyhedral.
We~say that a face $\sigma\in F$ \Def{persists} in $\tilde G$ if its vertices are part of at least three ${\sim}_{\mathrm c}$-equivalence classes.
If this is not the case then $\sigma$ \Def{collapses} either into an edge (if there are only two equivalence classes) or into a vertex (if there is only one equivalence class).
Since $\tilde G$ is a contraction-only minor (as opposed to a minor with edge deletions),\nls no two faces of $G$ can ``join'' into a single face in $\tilde G$. 
For this reason we can write $\tilde F\subseteq F$.

\subsection{Proof of \cref{res:sequence}: the existence of a contraction sequence}
\label{sec:sequence}

For this section we are given a polyhedral graph $G=(V,E,F)$ with an edge $\hati\hatj\in E(G)$ so that $\tilde G=(\tilde V,\tilde E,\tilde F):= G/\hatihatj$ is a polyhedral graph as well. 
The goal is to prove~the following:

\begin{lemmaX}{\ref{res:sequence}}
    Given a convex polyhedral realization $\tilde P$ of $\tilde G$, there exists a sequence $P^1,P^2,$ $P^3,...$ of convex polyhedral realizations of $G$ with $P^n\to \tilde P$.
\end{lemmaX}

The convergence $P^n\to \tilde P$ is meant in the following sense: if $P^n=(\bs p^n,\bs a^n)$ and $\tilde P=(\tilbs p,\tilbs a)$, then
\begin{align*}
    p_i^n\to \tilde p_I,&\quad\text{whenever $i\in I$,}
    \\[-0.8ex]
    a_\sigma^n \to \tilde a_\sigma,&\quad\text{whenever $\sigma\in\tilde F$}.
\end{align*}

Our strategy is to transform the problem into the language of planar self-stressed frameworks (via the Maxwell-Cremona correspondence, \cref{res:maxwell_cremona}), where we then construct the required sequence using Tutte embeddings (\cref{res:tutte}).

Recall that Tutte embeddings require a choice of facial triangle in $G$.
We additionally require that this triangle survives the edge contraction and persists in $\tilde G$.
We shall first assume that such a triangle exists, and deal with~the other case later.

\subsubsection*{Case 1: Assume that $G$ contains a triangular face $\Delta$ that persists in $\tilde G$.}
We can choose a projective transformation $T$ so that $\tilde Q:=T(\tilde P)$ is a Maxwell-Cremona lifting~of~a~planar self-stressed framework $(\tilde G,\tilbs v ,\tilbs\omega)$ whose outer face is $\Delta$, hence, with a positive self-stress on $E\setminus \Delta$.
In particular, $(\tilde G,\tilbs v ,\tilbs\omega)$ is also a Tutte embedding.

We next construct a sequence of self-stressed embeddings $(G,\bs v^n, \bs\omega^n)$ that ``con\-verges'' to $(\tilde G,\tilbs v,\tilbs\omega)$ in the sense
$$v_i^n\longrightarrow \tilde v_I,\quad\text{whenever $i\in I$}.$$
In particular, the edge $\hatihatj$ shrinks to length zero, while other edges stay of positive length.
This is achieved using Tutte embeddings, 
where the key observation is the following: if the self-stress on an edge goes to infinity, then the length of that edge in a Tutte embedding goes to zero.
This is intuitive: the edge models~a~spring~of~increasing spring constant, trying to contract ever stronger.
A~rigorous~formulation requires some care and can be found in \cref{sec:appendix_stresses} (specifically \cref{res:appendix_limit_stressed}).

Let $\hatbs\omega^n\: E\setminus \Delta\to\RR>0$ be a sequence of partial stresses with $\hat\omega_\hatihatj^n\to\infty$ and that converges to $\tilbs \omega$ on $E\setminus \Delta$ in the following sense: the limit $\hat\omega^n_{ij}\to\hat\omega^*_{ij}$ exists whenever $i\not\sim_{\mathrm c}j$ and
$$\sum_{\mathclap{\subalign{i &\in I \\ j &\in J}}} \hat\omega_{ij}^* =\tilde\omega_{IJ},\quad \text{for all $IJ\in \tilde E\setminus\Delta$}.$$
For example, for $ij\in IJ$ we could set $\hat\omega_{ij}^n=\hat\omega_\ij^*:=\tilde\omega_{IJ}/|IJ|$. 
Recall that $|IJ|$~denotes the number of elements of $\{ij\in E\mid i\in I\text{ and }j\in J\}$.

Let $(G,\bs v^n,\bs\omega^n)$ be a Tutte embedding extending the partial stress $\hatbs\omega^n$, and where we set $\smash{v_i^n:=\tilde v_{[\kern0.5pt i\kern0.7pt]}}$ if $i\in\Delta$ so as to make it unique (here $[\kern0.5pt i\kern0.7pt]$ denotes the equivalence class of $i$ in $\tilde G$).
We will now use that Tutte embeddings behave continuously in~the following sense:
the limit $\bs v^n\to\bs v^*$ exists, satisfies $v^*_i = \tilde v_I$ whenever $i\in I$, and is a Tutte embedding for the self-stresses $\tilbs\omega$. 
For details see \cref{sec:appendix_tutte} (specifically \cref{res:appendix_limit_tutte}).

Having built the contraction sequence for planar self-stressed frameworks, it~remains to lift it back to a contraction sequence of polyhedra.
Let then $Q^n\simeq(\barbs p^n,\barbs a^n)$ be the Maxwell-Cremona lift of $(G,\bs v^n,\bs \omega^n)$, where we assume $\smash{\bar p^n_i:=\bar p^n_{[\kern0.5pt i\kern0.7pt]}}$ for all $i\in\Delta$ so as to make the lifting unique.
Now we use that Maxwell-Cremona lifts behave continuously under limits.
The details can be found in \cref{sec:appendix_MC}.
We conclude that $Q^n\to \tilde Q$, and hence that $P^n:=T^{-1}Q^n$ is the desired sequence with $P^n\to P$.
This concludes Case 1.

\par\smallskip

For Case 1 we relied on the existence of a facial triangle that survives contraction.
The following lemma shows that if there is no such triangle then there must be a 3-vertex that survives contraction:

\begin{lemma}
    \label{res:triangle_3_vertex}
    Either $G$ contains a facial triangle that is not incident to $\hatihatj$, or $G$ contains a 3-vertex that is not incident to $\hatihatj$.
\end{lemma}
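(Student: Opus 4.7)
The plan is a purely combinatorial double-counting argument via Euler's formula, leveraging only that $G$ is 3-connected and planar (so that every vertex has degree at least $3$ and every face has size at least $3$, since a face of size $\le 2$ or a vertex of degree $\le 2$ would contradict 3-connectedness).

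Assume for contradiction that every facial triangle of $G$ is incident to $\hatihatj$ and that every degree-$3$ vertex of $G$ is incident to $\hatihatj$. Let $V_3 \subseteq V$ denote the set of degree-$3$ vertices of $G$ and $F_3 \subseteq F$ the set of triangular faces. A degree-$3$ vertex incident to $\hatihatj$ is either $\hati$ or $\hatj$, so $|V_3| \le 2$. A triangular face incident to $\hatihatj$ must have $\hatihatj$ on its boundary, and since the edge $\hatihatj$ lies on the boundary of exactly two faces (by planarity), we also have $|F_3| \le 2$. Hence $|V_3| + |F_3| \le 4$.

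Now I would apply the standard handshake/face-degree inequalities. All vertices not in $V_3$ have degree at least $4$, so
\[
2E \;=\; \sum_{v \in V} \deg(v) \;\ge\; 3\,|V_3| + 4\,(|V| - |V_3|) \;=\; 4|V| - |V_3|.
\]
Similarly, $2E \ge 4|F| - |F_3|$. Adding these and using Euler's formula $|V| + |F| = E + 2$,
\[
4E \;\ge\; 4(|V| + |F|) - |V_3| - |F_3| \;=\; 4E + 8 - |V_3| - |F_3|,
\]
so $|V_3| + |F_3| \ge 8$. This contradicts $|V_3| + |F_3| \le 4$.

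The argument is short and self-contained; the only possible obstacle is verifying that my accounting of triangles incident to $\hatihatj$ is tight (there are at most two such triangles because a planar edge bounds exactly two faces, and only those two faces can contain $\hatihatj$ on their boundary). Given that, the contradiction follows immediately, establishing the lemma.
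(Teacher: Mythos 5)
Your proof is correct and takes essentially the same route as the paper: both bound the number of triangles and $3$-vertices incident to $\hatihatj$ by $2$ each (an edge bounds two faces, and has two endpoints), and then derive a contradiction from Euler's formula combined with the handshake/face-degree inequalities $2E\ge 4|V|-|V_3|$ and $2E\ge 4|F|-|F_3|$. The only cosmetic difference is that the paper separately deduces $|V|\le(|E|+1)/2$ and $|F|\le(|E|+1)/2$ before adding, while you add the two inequalities first and conclude $|V_3|+|F_3|\ge 8$; the underlying double-counting is identical.
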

\begin{proof}
    At most two triangles are incident to $\hatihatj$, and at most two 3-vertices are~incident to $\hatihatj$.
    The result then follows from the claim that $G$ contains either at least three triangles or at least three 3-vertices.
    Suppose the contrary, that there are at most two triangles and 3-vertices, then 
    \begin{align*}
        2|E| &= \sum_{\mathclap{i\in V}} \mathrm{deg}(i)\ge 2\cdot 3+(|V|-2)\cdot4=4|V|-2 &&\hspace{-4em} \quad\implies\; |V|\le {\textstyle \frac{|E|+1}2},
        \\
        2|E| &= \sum_{\mathclap{\sigma\in F}} \operatorname{gon}(\sigma)\ge 2\cdot 3+(|F|-2)\cdot4=4|F|-2 &&\hspace{-4em} \quad\implies\; |F|\le {\textstyle \frac{|E|+1}2}.
    \end{align*}
    where $\operatorname{gon}(\sigma)$ denotes the \emph{gonality} of the polygon $\sigma$, \ie\ its number of edges.
    
    Using Euler's polyhedral formula $|V|-|E|+|F|=2$, we arrive at a contradiction:
    $$|E|+2=|V|+|F|\le \textstyle{\frac{|E|+1}2 + \frac{|E|+1}2}=|E|+1. \;\contradiction$$
\end{proof}

Therefore, if we are not in Case 1, then by \cref{res:triangle_3_vertex} there is a 3-vertex which will be our second and final case to consider.

\begin{figure}[h!]
    \centering
    \includegraphics[width=0.6\linewidth]{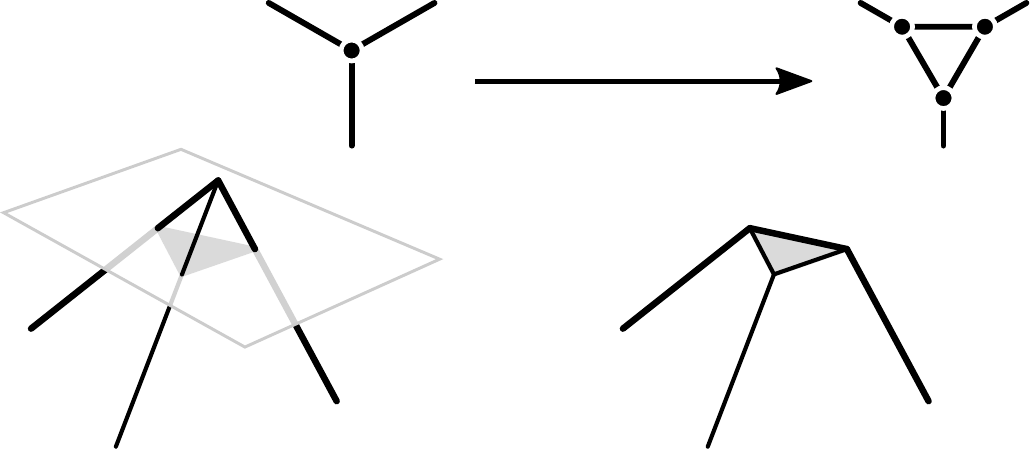}
    \caption{Transforming a 3-vertex into a triangular face.}
    \label{fig:cutting_off_vertex}
\end{figure}

\emph{Case 2: $G$ contains a 3-vertex that persists in $\tilde G$.}
Let then $i\in V\setminus\{\hati,\hatj\}$~be a 3-vertex.
The following operation on $i$ yields a polyhedral graph $H$ with a triangular face $\Delta$: subdivide all edges incident to $i$ with new vertices $j_1,j_2,j_3$, delete $i$, and add edges to form the triangle $\Delta:=j_1j_2j_3$. 
Geometrically this corresponds to ``cutting off'' the 3-vertex using a hyperplane that separates it from all other vertices (see \cref{fig:cutting_off_vertex}).
Since $\hatihatj$ is not incident to the 3-vertex $i$, this~operation can be equally performed in $\tilde G$ to construct a graph $\tilde H$, and we have $\tilde H = H/\hatihatj$. 

Given any polyhedral realization $\tilde Q$ of $\tilde H$, using Case 1 there is a sequence~$Q^1$, $Q^2, Q^3, ...$ of polyhedral realizations of $H$ with $Q^n\to\tilde Q$.
For $n$ sufficiently large, the inequalities defining the triangular face in $Q^n$ can be removed and the resulting polyhedron, call it $P^n$, is bounded and necessarily a realization of $G$. 
This yields the desired sequence $P^n\to \tilde P$.

\par\medskip
This finalizes the proof of \cref{res:sequence}. \hfill\qed

\begin{remark}
    \label{rem:general_contraction}
   It seems plausible that \cref{res:sequence} can be extended to general polyhedral minors $\tilde G\le G$ (\ie\ allowing edge deletions and more than one contraction).
   This result would then allow us to prove Case 2 using the dual polyhedron instead of cutting off a vertex (\cf\ \cref{fig:cutting_off_vertex}) using that edge contractions turn into edge deletions in the dual.
   However, this approach would also rely on extending the continuity results of Tutte embeddings and Maxwell-Cremona lifts, so it is beyond the scope of this paper.
\end{remark}

\subsection{Proof of \cref{res:limit_is_stressed_framework}: first-order flexibility in the limit}
\label{sec:limit_is_stressed_framework}

We recall the statement to be proven:

\begin{lemmaX}{\ref{res:limit_is_stressed_framework}}
    If $G=(V,E,F)$ is polyhedral, $e\in E$ is a well-contractible edge,\nls $\tilde P$~is a well-shaped convex realization of $\tilde G=(\tilde V,\tilde E,\tilde F):=G/e$, and $P^1,P^2,P^3,...$ $\to \tilde P$ is a~con\-traction sequence of first-order flexible convex polyhedra, then $\tilde P$ is first-order flexible as well.
\end{lemmaX}

By assumption, each $P^n$ has a first-order flex $(\dotbs p^n,\dotbs a^n)$.
Since they are non-zero, we can rescale each $(\dotbs p^n,\dotbs a^n)$ so that 
$$\sum_{i\in V} \|\dot p_i^n\|^2+\sum_{\sigma\in F} \|\dot a_\sigma^n\|^2=1.$$
This rescaled sequence lies in the compact unit sphere of $\RR^{dV+dF}$\!, and so it contains a convergent subsequence to which we can restrict.
We can now assume that~the limit $(\dotbs p^n,\dotbs a^n)\to (\dotbs p^*,\dotbs a^*)$ exists and is non-zero.
We set
$$
\dot{\tilde p}_I := \begin{cases}
    \dot p_i^* & \text{if $I\not=\hatij$}
    \\
    0 & \text{if $I=\hatij$}
\end{cases},
\;\;\text{for all $I\in \tilde V$}
,\qquad
\dot {\tilde a}_\sigma := \dot a_\sigma^*,\;\text{whenever $\sigma\in\tilde F$}.
$$
We verify that $(\dot{\tilbs p},\dot{\tilbs a})$ is a~non-zero first-order motion of the contracted polyhedron $\tilde P$.
This follows mostly from the fact that conditions \eqref{eq:flex_at_vertex} -- \eqref{eq:flex_at_face} hold for $(\dotbs p^n, \dotbs a^n)$ and are~preserved in the limit. 
To deal with topology changes in verifying \eqref{eq:flex_at_vertex} and \eqref{eq:flex_at_vertex_face} we use that $\dot p_\hati^n = \dot p_\hatj^n = 0$.
It now remains to show that $(\dot{\tilbs p},\dot{\tilbs a})$ is a \emph{non-trivial} first-order motion.

For this we need to make some retroactive assumptions about the contraction sequence and choice of flexes.
Let $\sigma$ and $\tau$ be the two faces incident to $\hatihatj$.
Further\-more, fix a line $\ell\subset\RR^3$.
We shall assume that the polyhedra $P^n$ are oriented so~that $p_\hati^n,p_\hatj^n\in\ell$ for all $n\in\NN$. 
Moreover, since $(\dotbs p^n,\dotbs a^n)$ is non-trivial, we can add to it~a suitable trivial first-order motion to achieve $\dot p_\hati^n=\dot p_\hatj^n=\dot a_\sigma^n=0$, while~$(\dotbs p^n,\dotbs a^n)$~remains non-zero.
Clearly, these modifications preserve generality and $\smash{(\dot{\tilbs p},\dot{\tilbs a})}$ can be defined as before.
Our strategy is now to assume that $\smash{(\dot{\tilbs p},\dot{\tilbs a})}$ is trivial, and deduce that it must also be zero, in contradiction to its construction.

If $\sigma$ is a triangle then the third vertex (next to $\hati$ and $\hatj$) will be called $i_\sigma$. Likewise $i_\tau$ for $\tau$.
For later use we now collect some useful relations for and between $(\bs p^n,\bs a^n)$ and $(\dotbs p^n,\dotbs a^n)$ which are preserved in the limit $n\to\infty$:

\par\smallskip
\begin{newmyenumerate}
\renewcommand{\labelenumi}{(R\arabic{enumi})}
    \setlength{\itemsep}{1ex}
    \item 
    $a_\sigma^n,a_\tau^n\perp\ell$ because $a_\sigma^n$ and $a_\tau^n$ are normal vectors to faces of $P^n$ that contain a segment of $\ell$ (namely, the edge $\hatihatj$).
    \item 
    If $\dot a_\tau^n\not=0$ then $\dot a_\tau^n\perp \ell$ is a direct consequence of the first-order flex condition \eqref{eq:flex_at_vertex_face}, that is, $0=\<\dot a_\tau^n,p_\hati^n-p_\hatj ^n\> +\<a_\tau^n,\dot p_\hati^n-\dot p_\hatj ^n\> = \<\dot a_\tau^n,p_\hati^n-p_\hatj ^n\>$, where we applied $\dot p_\hati ^n=\dot p_\hatj ^n=0$.
    \item 
    If $\dot a_\tau^n\not=0$ then $\dot a_\tau^n\perp a_\tau^n$ is a direct consequence of the first-order flex~condition \eqref{eq:flex_at_face}, that is, $\<\dot a_\tau^n, a_\tau^n\>=0$.
\end{newmyenumerate}

\par\smallskip
\noindent
If $\tau$ is a triangle we have the following additional relations:

\par\smallskip
\begin{newmyenumerate}
\renewcommand{\labelenumi}{(R\arabic{enumi})}
    \setcounter{enumi}{3}
    \setlength{\itemsep}{1ex}
    \item 
    $a_\tau^n \perp p_{i_\tau}^n-p_\hati^n$ because $p_{i_\tau}^n-p_\hati^n$ is the direction of an edge incident to $\tau$.\nls 
    The same clearly holds with $\hatj$ in place of $\hati$.
    \item 
    If $\dot p_{i_\tau}^n\not=0$ then $\dot p_{i_\tau}^n\perp p_{i_\tau}^n - p_\hati ^n$ is a direct consequence of the first-order flex condition \eqref{eq:flex_at_vertex}, that is,  $0=\<p_{i_\tau}^n - p_\hati ^n, \dot p_{i_\tau}^n - \dot p_\hati ^n\>=\<p_{i_\tau}^n - p_\hati ^n, \dot p_{i_\tau}^n\>$, where we applied $\dot p_\hati ^n=0$.
    The same clearly holds with $\hatj$ in place of $\hati$.
    \item
    If $\dot p_{i_\tau}^n\not=0$ then $\dot p_{i_\tau}^n\parallel a_\tau^n$ is a consequence of combining (R4) and (R5): both $\dot p_{i_\tau}^n$ and $a_\tau^n$ are perpendicular to the edges $i_\tau\hati$ and $i_\tau\hatj$.
\end{newmyenumerate}

\par\medskip
We now distinguish three cases:

\par\medskip
\emph{Case 1: both $\sigma$ and $\tau$ are non-triangular.}
Then both faces persist in the limit.\nls
We have $a_\sigma^*,a_\tau^*\perp\ell$ by (R1).
Since the faces persist, they cannot be parallel in the limit, hence $a_\sigma^*\nparallel a_\tau^*$. In other words, $a_\sigma^*$, $a_\tau^*$ and (the direction vector of) $\ell$ form a basis of $\RR^3$.
At the same time, if $\dot a_\tau^*\not=0$, then $\dot a_\tau^*\perp a_\tau^*,\ell$ by (R2) and (R3) respectively.
Hence, $\dot a_\tau^*$ cannot also be perpendicular to $a_\sigma^*$.

However, if $(\dotbs p^*,\dotbs a^*)$ is a trivial first-order motion, then condition \eqref{eq:trivial_motion_facet} applies to the persisting faces $\sigma,\tau\in F$ and simplifies to
$$0=\<a_{\sigma}^*,\dot a_{\tau}^*\> + \<\dot a_{\sigma}^*, a_{\tau}^*\> = \<a_{\sigma}^*,\dot a_{\tau}^*\>,\quad\text{(we used $\dot a_\sigma^*=0$)}$$
in contradiction to $\dot a_\tau^*\not\perp a_\sigma^*$. We conclude $\smash{\dot{\tilde a}_{\tau}}=\dot a_\tau^*=0$.

But a trivial first-order motion with $\smash{\dot {\tilde p}_{\hatij}=\dot {\tilde a}_\sigma=\dot {\tilde a}_\tau=0}$ must be zero: $\smash{\dot {\tilde p}_{\hatij}=0}$~prevents infinitesimal translation; $\smash{\dot {\tilde a}_\sigma=\dot {\tilde a}_\tau=0}$ pre\-vents infinitesimal rotation.

\par\medskip
\emph{Case 2: precisely one of $\sigma$ and $\tau$ is a triangle.} 
We may assume w.l.o.g.\ that $\tau$ is the triangle (with third vertex $i_\tau$).
In particular, $\sigma$ persists in the limit.

If $(\dotbs p^*,\dotbs a^*)$ is a trivial first-order motion, then condition \eqref{eq:trivial_motion_vertex_facet} applies to the~vertices $\hati ,i_\tau\in V$ together with the persisting face $\sigma\in F$ and simplifies to
$$0=\<a_{\sigma}^*,\dot p_{i_\tau}^* -\dot p_\hati^* \> + \<\dot a_{\sigma}^*,p_{i_\tau}^* - p_\hati^*\> = \<a_{\sigma}^*,\dot p_{i_\tau}^* \>. \quad\text{(we used $\dot a_\sigma^*=\dot p_\hati ^*= 0$)}$$
If $\dot p_{i_\tau}^*\not=0$, then this becomes $\dot p_{i_\tau}^* \perp a_\sigma^*$.
Together with $\dot p_{i_\tau}^*\parallel a_\tau^*$ from (R5) we obtain $a_\sigma^*\perp a_\tau^*$.
Note that even though $a_\tau^*$ is not the normal vector of a face of $\smash{\tilde P}$, it~is still the normal vector of a supporting hyperplane that contains the edge $i_\tau \hatij$.
However, by assumption $\tilde P$ is \emph{well-shaped}, that is, $\tilde p_{i_\tau}$ lies ``above'' $\sigma$.
Hence, no such supporting hyperplane can be orthogonal to $\sigma$.
We conclude $\smash{\dot{\tilde p}_{i_\tau}}\!=\dot p_{i_\tau}^* \!=0$.

But a trivial first-order motion with $\smash{\dot {\tilde p}_{\hatij}\,=\dot {\tilde p}_{i_\tau}\!=\dot {\tilde a}_\tau=0}$ must be zero: by $\smash{\dot {\tilde p}_{\hatij}=\dot {\tilde p}_{i_\tau}}\! = 0$ the infinitesimal motion must fix the line through $\tilde p_\hatij$ and $\tilde p_{i_\tau}$; $\smash{\dot {\tilde a}_\tau=0}$ prevents the remaining infinitesimal rotation around this line.

\par\medskip
\emph{Case 3: both $\sigma$ and $\tau$ are triangles.} 
If $(\dotbs p^*,\dotbs a^*)$ is a trivial first-order motion,\nls then condition \eqref{eq:trivial_motion_vertex} applies to the pair $i_\sigma,i_\tau\in V$ and simplifies to
$$0=\<p_{i_\tau}^*-p_{i_\sigma}^*,\dot p_{i_\tau}^*-\dot p_{i_\sigma}^*\> = \<p_{i_\tau}^*-p_{i_\sigma}^*,\dot p_{i_\tau}^*\>. \quad\text{(we used $\dot p_{i_\sigma}^*= 0$)}$$
If $\dot p_{i_\tau}^*\not=0$ then $\dot p_{i_\tau}^*\parallel a_\tau^*$ by (R6), and we obtain $\<p_{i_\tau}^*-p_{i_\sigma}^*,a_\tau^*\>=0$.
We also~have $\<p_{\hati}^*-p_{i_\tau}^*,a_\tau^*\>=0$ from (R4).
We conclude that $\tilde p_{i_\sigma}$, $\tilde p_{i_\tau}$ and $\tilde p_{\hatij}$ are coplanar.\nls
Note that even though $a_\tau^*$ does not define a face of $\smash{\tilde P}$, it still defines a supporting hyperplane that contains  $\tilde p_{i_\sigma}$, $\tilde p_{i_\tau}$ and $\tilde p_{\hatij}$.
Since $\hatij\kern0.8pt i_\sigma$ and $\hatij\kern0.8pt i_\tau$ are distinct edges of $\smash{\tilde P}$ that lie in a common supporting hyperplane, they must be edges of a common face $\kappa$ of~$\smash{\tilde P}$.\nls 
But then $\kappa$ already existed in $P$, and there it must have been adjacent to both $\sigma$ and $\tau$.
Since $\sigma$ and $\tau$ are themselves adjacent in $P$, this can only have been the case if these three faces share a vertex, necessarily of degree three, which must be either $\hati$ or $\hatj$.
This is in contradiction to our assumption that $\hatihatj$ is \textit{well-contractible}, that is, is either incident to a non-triangular face or non-incident to a 3-vertex.
We conclude that $\smash{\dot{\tilde p}_{i_\tau}}=\dot p_{i_\tau}^*=0$.

But a trivial first-order motion with $\smash{\dot {\tilde p}_{\hatij}\,=\dot {\tilde p}_{i_\tau}\!=\dot {\tilde p}_{i_\sigma}\!=0}$ is zero: the three vertices $\tilde p_{i_\sigma}$, $\tilde p_{i_\tau}$ and $\tilde p_{\hatij}$ do not lie on a line because they define two distinct edges, that is, they span a plane. A trivial first-order motion fixes this plane only if it is zero.

\par\medskip
This finalizes the proof of \cref{res:limit_is_stressed_framework}. \qed

\par\medskip

We comment here on a failed attempt at shortening the proof.
It is well known that every polyhedral graph contains a $K_4$-minor \cite[Section 11.3]{grunbaum1967convex}.
Thus, instead of contracting one edge at a time, it appears plausible to circumvent induction by contracting $G$ into $K_4$ in one step.
We do believe that constructing a corresponding contraction sequence $P^1,P^2,P^3,\dots\to\tilde P$ is still possible, though certainly more involved. 
However, the obstacle lies in the proof of this section: it~is~possible that a sequence of first-order flexible polyhedra converges to a first-order rigid polyhedron:
\begin{example}
\label{ex:flexible_to_rigid}
    Let $P$ be some first-order rigid polyhedron and $Q$ some other~generically oriented polyhedron.
    From \cref{sec:minkowski_sums} we know that $R(t):=P+ tQ$ is flexible for every $t>0$, in particular, first-order flexible.
    For $t\to 0$ the polytope $R(t)$ converges to $P$, which is first-order rigid.
\end{example}
As far as we can tell, proceeding inductively and contracting one edge at a time cannot be avoided within our approach to \cref{res:generic}.

\section{Further discussion and outlook}
\label{sec:connclusions}
\label{sec:outlook}

\subsection{Higher dimensions}
\label{sec:higher_dim}

Our proof of \cref{thm:generic} uses tools that are available only for 3-dimensional polytopes and have no direct analogues in dimension $d\ge 4$. 
Below we present a template for an argument that might eventually~be~used~to~obtain similar results in higher dimensions.

Let $\mathcal P$ be the combinatorial type of a convex $d$-polytope with $d\ge 4$. The goal is to show that it is generically rigid by induction on the dimension:
\begin{enumerate}
    \item Fix a generic realization $P$ of $\mathcal P$.
    \item Conclude that then the facets of $P$ are generic $(d-1)$-polytopes.
    \item Any first-order motion $\dotbs p$ of $P$ induces a first-order motion on the facets.
    \item Since the facets are generic, this motion is trivial by induction hypothesis.
    In other words, $\dotbs p$ restricts to a trivial motion on each facet.
    \item This implies that $\bsdot p$ is trivial by the \emph{Cauchy-Dehn theorem} (see \eg \cite[Theorem 8.1 and 8.6]{whiteley1984infinitesimally}).
\end{enumerate}
The gap in the argument lies in step (2): it is far from clear that a generic realization of $P$ guarantees that its facets are generic as well.

In fact, given a $d$-polytope $P$ (with algebraic coordinates) there exists a polytope $Q$ of dimension $d+2$ (a so-called \emph{stamp polytope} of $P$) which has a face $P'\subset Q$ that in each realization of $Q$ is \emph{projectively equivalent} to $P$ \cite{dobbins2013antiprismless}.
In other words,\nls in~order to make the above approach work, it seems necessary for us to show that already some projective transformation of $P$ is 
first-order rigid.

Since all our examples of flexible polytopes have parallel edges (see~\cref{q:parallel_edges}), but a generically chosen projective transformation has no parallel edges, we think this is at least plausible:

\begin{conjecture}
    \label{conj:projective_generic}
    \label{conj:projective}
    Given a convex polytope $P\subset\RR^d$ of dimension $d\ge 3$, a generic projective transformation of $P$ is (first-order) rigid.
\end{conjecture}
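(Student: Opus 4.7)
The plan is to reduce the conjecture to the existence of a single first-order rigid projective image. Since first-order rigidity of $T(P)$ is equivalent to the rigidity matrix attaining maximal rank (\cref{res:rank_condition}), and the entries of this matrix depend polynomially on $T$, the set of $T \in \mathrm{PGL}(d+1)$ for which $T(P)$ is first-order flexible is Zariski closed. Because $\mathrm{PGL}(d+1)$ is irreducible, this locus is either all of $\mathrm{PGL}(d+1)$ or a proper subvariety of measure zero. The task therefore collapses to producing, for each convex polytope $P$ of dimension $d \ge 3$, a single projective transformation $T_0$ with $T_0(P)$ first-order rigid.

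For $d = 3$, I would combine this reduction with \cref{res:generic}: that theorem already supplies a first-order rigid (Zariski convex) realization $P^*$ of the combinatorial type of $P$. The subtle point is that $P^*$ need not lie in the projective orbit of $P$, as the orbit has dimension at most $15 = \dim \mathrm{PGL}(4)$, while $\realCvxCl(\mathcal{P})$ generally has much larger dimension $E+6$. One therefore needs an additional argument showing that the projective orbit, an irreducible subvariety of $\realCvxCl(\mathcal{P})$, meets the first-order rigid locus. A natural avenue is to analyze how the rigidity matrix transforms under projective maps and to extract a projectively invariant lower bound on its rank which matches the rank attained on the generic realizations supplied by \cref{res:generic}.

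For $d \ge 4$, the cleanest route appears to be induction on dimension along the Cauchy--Dehn template sketched in \cref{sec:higher_dim}. For a polytope $P$ and a facet $\sigma$, the restriction map $T \mapsto T|_{\aff(\sigma)}$ is a regular map $\mathrm{PGL}(d+1) \to \mathrm{PGL}(d)$ which is surjective (and a submersion at generic points), so for generic $T$ the induced transformation on $\aff(\sigma)$ is generic in $\mathrm{PGL}(d)$. Assuming the conjecture inductively in dimension $d-1$, every facet $T(\sigma)$ of $T(P)$ is then first-order rigid, and the Cauchy--Dehn theorem upgrades this to first-order rigidity of $T(P)$ itself, closing the induction step.

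The main obstacle lies in coupling all facets to a single ambient $T$: while the restriction map is individually surjective, a single choice of $T$ must simultaneously place every facet inside the (open, but potentially facet-specific) generic locus. This is a genericity statement inside $\mathrm{PGL}(d+1)$ whose proof seems to hinge on either (i) a positive answer to \cref{q:parallel_edges}, since a generic projective transformation destroys all parallelisms of edges, and more generally the linear dependencies among edge directions responsible for affine flexes (\cref{sec:affine_flexes}); or (ii) a finer understanding of the degeneracy loci of the rigidity matrix under the $\mathrm{PGL}(d+1)$-action, sufficient to rule out the pathological scenario in which every projective image of a specific $P$ is trapped inside an unexpected flexibility locus. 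Examples such as the stacked pyramid constructions in \cref{sec:stacking}, where flexibility is not caused by parallel edges in an obvious way, indicate that any successful argument will have to handle intrinsic flexes that survive projective reorientation.
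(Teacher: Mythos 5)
This statement is posed as a \emph{conjecture} in the paper, not a theorem, so there is no proof of the paper's own to compare against; the paper only sketches (in \cref{sec:higher_dim}) a reduction of the $d\ge 4$ case to $d=3$ via the Cauchy--Dehn template and explicitly leaves $d=3$ open. Your proposal recovers that same reduction and correctly identifies where the genuine difficulty lies, but it does not close the gap.

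The reduction step is sound: first-order flexibility of $T(P)$ is a rank condition on the rigidity matrix, whose entries are rational in the coordinates of $T$, so the flexible locus is Zariski closed in the irreducible group $\mathrm{PGL}(d+1)$ and is therefore either everything or measure zero. This is exactly the Gluck trick carried over to the projective orbit. Your $d\ge 4$ induction also matches the paper's sketch, and you add the useful observation that the restriction map from $\mathrm{PGL}(d+1)$ to projective transformations of a fixed facet hyperplane is surjective and open, which is precisely what justifies the paper's claim that ``the facets of a generic projective transformation of $P$ are themselves generically projectively transformed.'' Your worry about ``coupling all facets to a single ambient $T$'' is, however, not a real obstacle: there are finitely many facets, each imposing an open dense condition on $T$ (pullback of an open dense set under a submersion), and the finite intersection of open dense sets is open dense. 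The cited example of stacked flexes is also somewhat misleading here, as those are not counterexamples to the conjecture but merely examples showing that flexibility is not always caused by parallel edges.

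The genuine gap is the $d=3$ base case, and you are right to flag it. \cref{res:generic} guarantees a first-order rigid realization $P^*$ in $\realCvxCl(\mathcal P)$, but $P^*$ has no reason to lie in the projective orbit of the given $P$: the orbit has dimension at most $\dim\mathrm{PGL}(4) = 15$, while $\realCvxCl(\mathcal P)$ has dimension $|E|+6$, which exceeds $15$ as soon as $|E| > 9$. The ``natural avenue'' you propose --- extracting a projectively invariant rank lower bound from the way the rigidity matrix transforms under $\mathrm{PGL}(4)$ --- is plausible as a research direction but is not carried out, and in fact the behaviour of first-order rigidity under even affine transformations is delicate (the paper notes that the regular dodecahedron is first-order flexible while its generic affine images appear not to be). Until the $d=3$ case is actually established, the $d\ge 4$ induction has no base, and the proposal remains a proof strategy rather than a proof; this is consistent with the paper, which also leaves the conjecture open.
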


Conveniently, it suffices to prove \Cref{conj:projective_generic} in dimension three.
The statement in higher dimensions follow via the above proof template. For step (2) we use that the facets of a generic projective transformation of $P$ are themselves generically projectively transformed.

\subsection{The dodecahedron and second-order rigidity for polytopes}

\;\;Among the Platonic solids only one remains with a non-obvious rigidity status: the regular \emph{dodecahedron}\footnote{Recall that the regular tetrahedron, octahedron and icosahedron are simplicial, hence rigid by Dehn's theorem; and the cube is a zonotope, hence flexible (\cf\ \cref{sec:zonotopes}).} (\cref{fig:dodecahedron}).
In general, determining whether an arbitrary polytope is rigid is theoretically and computationally difficult.
Already the dodecahedron~(arguably, a relatively small polytope) needs $3V+3F=96$ variables to~be~represented. 
We found this to be out of reach for exact solvers.
The question for its rigidity also attracted decent attention on MathOverflow \cite{MODodecahedronFlex}, but remained unanswered.
\begin{figure}[h!]
    \centering
    \includegraphics[width=0.215\textwidth]{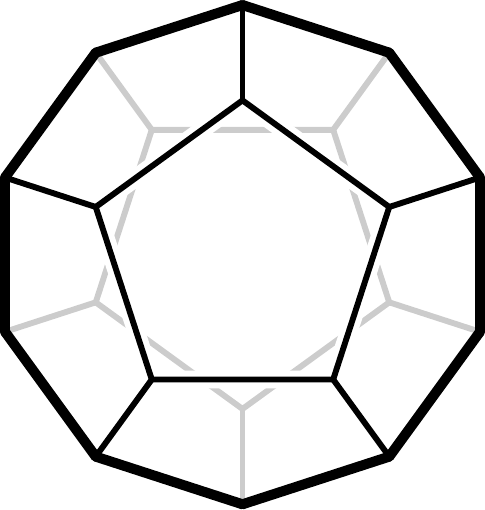}
    \caption{The regular dodecahedron.}
    \label{fig:dodecahedron}
\end{figure}

By now, the dodecahedron turned out to be an interesting toy example also~for other reasons.
First, it is \emph{not} first-order rigid, but has a 5-dimensional space of first-order flexes. Second, all these flexes seem to disappear after a generic affine transformation. This demonstrates that first-order rigidity of polytopes is not preserved under affine transformations -- a relevant point to consider in light of \cref{q:affine_trafo}.

The question for the rigidity of the regular dodecahedron was eventually resolved (through major contributions by Albert Zhang) by developing suitable \textit{second-order tools} for point-hyperplane frameworks. We therefore now know:
\begin{center}
    \emph{The regular dodecahedron is \ul{rigid}}.
\end{center}
The details of this will be reported in a separate paper focusing on the second-order theory of polytope rigidity.
%

\subsection{Edge length perturbations}


Consider a polytope $P\subset\RR^d$ with edge lengths $\ell_e,e\in E$.
Is there a polytope $P'$ combinatorially equivalent to $P$ with edge lengths $\ell_e':=\ell_e+\eps_e$ as long as $|\eps_e|$ is sufficiently small for all $e\in E$?
If so, we say that $P$ allows \emph{edge length perturbations}.

The fact that we need to respect the coplanarity constraints makes this appear unlikely. 
It turns out that in dimension $d=3$ this is a side effect of~infinitesimal rigidity: recall that $\realCvx(\mathcal P)$ for a 3-polytope is a smooth manifold of dimension $E+6$.
Suitably restricting the six trivial degrees of freedom (\eg\ by pinning or direction constraints) leaves us with a realization space of dimension $E$.
Infinitesimal rigidity ensures that the projection $\realCvx(\mathcal P)\ni(\bs p,\bs a)\mapsto(\|p_i-p_j\|)_{ij\in E}\in\RR^E$ is a local homeomorphism at $P$, meaning $P$ indeed allows edge length perturbations. 

\subsection{Other questions}

We are aware of a few other settings in which our notion~of rigidity poses interesting challenges and where analogous question can be asked:
\begin{itemize}
    \item non-convex polyhedra (\eg\ toroidal polyhedra, periodic polyhedra, etc.),
    \item non-orientable polyhedra,
    \item polyhedra in spherical or hyperbolic space.
\end{itemize}
In particular, at time of writing we are not aware of any flexible convex polyhedra in the non-Euclidean setting.

\par\bigskip
\noindent
\textbf{Funding.} 
Matthias Himmelmann is grateful for the financial support from the project ``Discretization in Geometry
and Dynamics'' (SFB 109) in the Deutsche Forschungsgemeinschaft
(DFG) and acknowledges the support by the National Science Foundation under Grant No.\ DMS-1929284 while the author was in residence at the Institute for Computational and Experimental Research in Mathematics in Providence, RI, during the Geometry of Materials, Packings and Rigid Frameworks semester program.

Martin Winter was supported as Dirichlet Fellow by the Berlin Mathematics Research Center MATH\raisebox{0.25ex}{$+$} and the Berlin Mathematical School, funded by the Deutsche Forschungsgemeinschaft (DFG, German Research Foundation) under Germany’s Excellence Strategy (EXC-2046/1, project ID 390685689), and furthermore by the SPP 2458 ``Combinatorial Synergies'' (project ID 539851419), funded by the Deutsche Forschungsgemeinschaft (DFG, German Research Foundation).

\par\bigskip
\noindent
\textbf{Acknowledgements.} 
We thank Albert Zhang for helpful discussions. We also thank the Fields Institute for Research in Mathematical Sciences for providing a productive work environment and financial support during the focus program on Geometric Constraint Systems from July 1 to August 31, 2023. Moreover, we thank the Institute for Computational and Experimental Research in Mathematics (ICERM) for their hospitality and financial support during the program on Geometry of Materials, Packings and Rigid Frameworks from January 29 to May 2, 2025.


\bibliographystyle{abbrv}
\bibliography{reference}
\addresseshere

\appendix
\section{Limit behavior of the Tutte-Maxwell-Cremona pipeline}
\label{sec:topology_changes}
\label{sec:MCC_Tutte_topology_changes}

In \cref{sec:sequence} we used that Tutte embeddings and Maxwell-Cremona lifts behave continuously in the limit of a contraction sequence.
We believe that this is natural, but we were unable to locate these statements in the literature.
We provide here a proof for general contraction sequences. See \cref{sec:Tutte_MC} to recall Tutte embeddings and Maxwell-Cremona lifts.

Throughout this appendix, let $G$ be a graph and $\tilde G\le G$ a contraction-only minor (see \cref{sec:contraction_terminology} to recall the notation and terminology of minors).

Given a sequence $(G,\bs v^n)$ of frameworks, we write $(G,\bs v^n)\to(G,\bs v^*)$ if \mbox{$\bs v^n\to\bs v^*$ as} $n\to\infty$. We call $(G,\bs v^*)$ the \Def{limit framework}.
We write $(G,\bs v^*)\simeq (\tilde G,\tilbs v)$~if~$v_i^*=\tilde v_I$ for all $i\in I$. 
In particular, this implies $v_i^*=v_j^*$ whenever $i\sim_{\mathrm c} j$. 
We call~$(\tilde G,\tilbs v)$ the \Def{contracted framework} and $(G,\bs v^n)$ a \Def{contraction sequence}.

Our goal is to show:
constructing the Tutte embedding or Maxwell-Cremona lift of a convergent sequence of input data and then taking the limit yields the~same~result as first taking the limit of the data and then constructing the Tutte embedding or Maxwell-Cremona lift thereof.
A schematic representation is given in \cref{fig:diagram}.

{
\begin{figure}[h!]
\centering
\begin{tikzcd}
\hat{\bs\omega}^n \arrow[r, "n\to\infty"] \arrow[d, "\mathrm T"'] 
& \hatbs\omega^* \arrow[r, "\sim"]       
& \hat{\tilbs\omega} \arrow[d, "\mathrm T"]     
\\
{(G,\bs v^n,\bs\omega^n)} \arrow[r, "n\to\infty"] \arrow[d, "\mathrm{MC}"']      
& {(G,\bs v^*,\bs\omega^*)} \arrow[r, "\sim"] 
& {(\tilde G,\tilbs v,\tilbs \omega)} \arrow[d, "\mathrm{MC}"] 
\\
{(G,\bs p^n,\bs h^n)} \arrow[r, "n\to\infty"]                      & {(G,\bs p^*,\bs h^*)} \arrow[r, "\sim"]      
& {(\tilde G,\tilbs p,\tilbs h)}                              
\end{tikzcd}    
\caption{Schematic representation of the limit behavior of the Tutte-Maxwell-Cremona pipeline as a commuting diagram. $\mathrm T$ stands~for~``constructing the Tutte embedding''; $\mathrm{MC}$ stands for ``constructing the~Max\-well-Cremona lift''.}
\label{fig:diagram}
\end{figure}
}

\subsection{Stresses}
\label{sec:appendix_stresses}
Suppose that $(G,\bs v^n,\bs\omega^n)$ is a sequence of self-stressed frameworks with $(G,\bs v^n)\to (G,\bs v^*)\simeq (\tilde G,\tilbs v)$.
Suppose further that $\omega_{ij}^n\to\omega_{ij}^*$ whenever $i\not\sim_{\text c} j$.
We can consider $\bs\omega^*$ as a partial stress on $G$ and write $(G,\bs v^n,\bs\omega^n)\to(G,\bs v^*,\bs\omega^*)$~to denote a convergence that includes the stress.

\begin{lemma}    
    \label{res:appendix_limit_stressed}
    In the setting above, $(\tilde G,\tilbs v,\tilbs\omega)$ is a self-stressed framework, where
    \begin{equation}
        \label{eq:def_omega_tilde}
        {
            \tilde\omega\:E(\tilde G)\to\RR,\quad
            \tilde \omega_{IJ} := \sum_{\mathclap{\subalign{i &\in I \\ j &\in J}}} \omega_{ij}^*
        }.
    \end{equation}
\end{lemma}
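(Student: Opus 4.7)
The plan is to verify the stress equilibrium condition for $(\tilde G, \tilbs v, \tilbs \omega)$ by summing the equilibrium conditions of $(G, \bs v^n, \bs \omega^n)$ over each contraction class $I\in \tilde V$, taking the limit, and showing that the contributions of intra-class edges (the ones where $\omega_{ij}^n$ might diverge) cancel out, so only the well-convergent inter-class terms survive.

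First, I would introduce the forces $f_{ij}^n := \omega_{ij}^n(v_j^n - v_i^n)$ for $ij \in E$. For an inter-class edge $ij$ with $i \not\sim_{\mathrm c} j$, both factors converge, so $f_{ij}^n \to f_{ij}^* := \omega_{ij}^*(v_j^* - v_i^*)$; for intra-class edges ($i \sim_{\mathrm c} j$), one has $v_j^n - v_i^n \to 0$, and the behavior of $\omega_{ij}^n$ is unconstrained, but we will sidestep this by exploiting antisymmetry $f_{ij}^n = -f_{ji}^n$. For a fixed class $I \in \tilde V$, the equilibrium of $\bs \omega^n$ at each $i\in I$ gives
\begin{equation*}
0 \;=\; \sum_{i\in I}\, \sum_{\mathclap{j:ij\in E}} f_{ij}^n \;=\; \sum_{i\in I} \sum_{\mathclap{\substack{j\not\in I\\ij\in E}}} f_{ij}^n \;+\; \sum_{\mathclap{\substack{i,j\in I\\ ij\in E}}} f_{ij}^n,
\end{equation*}
and the second sum vanishes for every $n$ by antisymmetry (each unordered pair contributes $f_{ij}^n + f_{ji}^n = 0$).

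Hence $\sum_{i\in I}\sum_{j\notin I,\, ij\in E} f_{ij}^n = 0$ for all $n$. Since every edge appearing in this sum is inter-class, the summand converges, and so the identity passes to the limit:
\begin{equation*}
0 \;=\; \sum_{i\in I}\, \sum_{\mathclap{\substack{j\notin I\\ ij\in E}}} f_{ij}^*
\;=\; \sum_{\mathclap{J: IJ\in \tilde E}}\; \sum_{\mathclap{\subalign{i&\in I\\ j&\in J}}} \omega_{ij}^*(v_j^* - v_i^*)
\;=\; \sum_{\mathclap{J: IJ\in \tilde E}} \tilde\omega_{IJ}(\tilde v_J - \tilde v_I),
\end{equation*}
which is precisely the stress equilibrium condition for $(\tilde G, \tilbs v, \tilbs \omega)$ at $I$. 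Since $I\in \tilde V$ was arbitrary, $\tilbs \omega$ is a self-stress.

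The only subtle point is the grouping/regrouping of sums used in the second line, but since the inner sum over pairs $(i,j)\in I\times J$ with $ij\in E$ is finite and all its terms converge, the reordering is trivially justified. I expect no genuine obstacle here; the lemma is essentially bookkeeping combined with the fact that antisymmetry of forces eliminates precisely the (possibly divergent) intra-class contributions.
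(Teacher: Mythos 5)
Your proof is correct and follows essentially the same approach as the paper: sum the equilibrium conditions over a class $I$, use the antisymmetry $f_{ij}^n = -f_{ji}^n$ to cancel the (possibly divergent) intra-class terms, and pass to the limit on the surviving inter-class terms. The paper organizes the computation as a single backward-limit chain starting from $\sum_J \tilde f_{IJ}$, while you establish the identity $\sum_{i\in I}\sum_{j\notin I} f_{ij}^n = 0$ for each $n$ and then take the forward limit, but the content is identical.
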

\begin{proof}
    We set $f_{ij}^n:=\omega_{ij}^n(v_j-v_i)$.
    Then $(*)\,\sum_j f_{ij}^n=0$ for all $i\in V(G)$.
    Moreover, $f_{ij}^n=-f_{ji}^n$, and for each $I\in V(\tilde G)$ holds (assuming some arbitrary order on $V(G)$)
    $$
    (**)\quad\sum_{\mathclap{i,j\in I}} f^n_{ij} 
        = \sum_{\mathclap{\substack{i,j\in I\\i<j}}} (f^n_{ij}+f^n_{ji}) 
        = \sum_{\mathclap{\substack{i,j\in I\\i<j}}} (f^n_{ij}-f^n_{ij})
        = 0
    $$
    Also, $f_{ij}^n\to f_{ij}^*:=\omega_{ij}^*(v_j^*-v_i^*)$ whenever $i\not\sim_{\text c} j$. For $I\not= J$ we set
    $$
    \tilde f_{IJ}
    :=\tilde\omega_{IJ}(\tilde v_J-\tilde v_I) 
    =\sum_{\mathclap{\subalign{i &\in I \\ j &\in J}}} \omega_{ij}^*(v_j^*-v_i^*)
    =\sum_{\mathclap{\subalign{i &\in I \\ j &\in J}}} f_{ij}^*.
    $$
    We can now verify the equilibrium condition in $(\tilde G,\tilbs v,\tilbs\omega)$: for each $I\in V(\tilde G)$ holds
    \begin{align*}
        \sum_{\mathclap{J:J\not= I}} \tilde f_{IJ}
        = 
        \sum_{\mathclap{J:J\not= I}}\,\, \sum_{\mathclap{\subalign{i&\in I\\j&\in J}}} f_{ij}^*
        =
        \sum_{i\in I} \sum_{\mathclap{j\not\in I}} f_{ij}^*
        \;\xleftarrow{n\to\infty}\;
        \sum_{i\in I} \sum_{\mathclap{j\not\in I}} f_{ij}^n
        =
        \smash{\sum_{i\in I}\underbrace{\sum_{j} f_{ij}^n}_{\mathclap{\text{$=\!0$ by $\!(*)$}\;\;}}
            - \!\!\underbrace{\,\,\sum_{\mathclap{i,j\in I}} f_{ij}^n}_{\mathclap{\;\;\text{$=\!0$ by $\!(**)$}}}}
        = 0.
    \end{align*}
    %
\end{proof}

Justified by \cref{res:appendix_limit_stressed} we will write $(G,\bs v^*,\bs\omega^*)\simeq (\tilde G,\tilbs v,\tilbs\omega).$

\subsection{Tutte embeddings}
\label{sec:appendix_tutte}

From this section on we assume that both $G$ and $\tilde G$ are polyhedral graphs.
For this section we furthermore assume that $G$ has a triangular face $\Delta\subset G$ that persists in $\tilde G$.

Let now $\hatbs\omega^n\:E\setminus\Delta\to\RR_+$ be a sequence of positive partial stresses on $G$, and let $(G,\bs v^n,\bs \omega^n)$ be a corresponding Tutte embedding (\cf\ \cref{res:tutte}).
Suppose $v_i^0,i\in\Delta$ are affinely independent, and fix $v_i^n:=v_i^0$ for all $i\in\Delta$, $n> 0$ to make a consistent choice for the embeddings throughout the sequence.

Suppose now that $\hat\omega^n_{ij}\to\hat\omega_{ij}^*\in\RR_+$ whenever $i\not\sim_{\mathrm c}j$ and $\hat\omega_{ij}^n\to\infty$ otherwise. We
define the partial stress
\begin{equation}
    \label{eq:contracted_partial_stress}
    \hat{\tilde{\bs\omega}}\:E(\tilde G)\setminus\Delta\to\RR_+,\quad
    \hat{\tilde{\omega}}_{IJ} := \sum_{\mathclap{\subalign{i &\in I \\ j &\in J}}} \hat\omega_{ij}^*>0.
\end{equation}

Let $(\tilde G,\tilbs v,\tilbs\omega)$ be the corresponding Tutte embedding with $\tilde v_{[i]}:= v_i^0$ for all $i\in\Delta$.
Our goal is to show that we obtain the same self-stressed framework if we take the limit of $(G,\bs v^n,\bs\omega^n)$ as $n\to\infty$, in particular, this limit exists in a relevant sense.
In other words, we show that the limit of Tutte embeddings is the Tutte embedding of the limit.

\begin{lemma}
    \label{res:appendix_limit_tutte}
    In the setting above there exists a framework $(G,\bs v^*,\bs\omega^*)$, with $\bs\omega^*$ a partial stress extending $\hatbs\omega^*$ to the edges of $\Delta$, so that $$(G,\bs v^n,\bs\omega^n)\to(G,\bs v^*,\bs\omega^*)\simeq(\tilde G,\tilbs v,\tilbs\omega).$$
\end{lemma}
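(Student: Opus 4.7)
The plan combines a compactness argument with an energy comparison. Tutte's theorem \cref{res:tutte} confines all inner vertices to $\conv\{v_i^0:i\in\Delta\}$, so $(\bs v^n)$ is bounded; passing to a subsequence I may assume $\bs v^n\to\bs v^*$. The three things to verify on such a subsequence are: (a) $v_i^*=v_j^*$ whenever $i\sim_{\mathrm c} j$, so that $\bs v^*$ descends to a configuration $\tilde{\bs v}$ on $\tilde G$; (b) the boundary stresses $\omega^n_{ij}$ with $ij\in\Delta$ converge; and (c) the limit $(\tilde G,\tilde{\bs v},\tilde{\bs\omega})$ is the Tutte embedding of $\tilde G$ with partial stress $\hat{\tilde{\bs\omega}}$ from \eqref{eq:contracted_partial_stress}. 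Together with the uniqueness clause in Tutte's theorem, (a)--(c) force every subsequential limit to coincide with the same object and thereby upgrade subsequential convergence to convergence of the full sequence.

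For (a) I would use the variational description of Tutte embeddings: $\bs v^n$ uniquely minimizes the Dirichlet energy $E_n(\bs v):=\sum_{ij\in E\setminus\Delta}\hat\omega^n_{ij}\|v_i-v_j\|^2$ under the boundary constraint $v_i=v_i^0$ for $i\in\Delta$. As a test configuration I take $\bar{\bs v}^n$ defined by $\bar v_i^n:=\tilde v_{[i]}$, the value at $[i]$ of the Tutte embedding of $\tilde G$. Its boundary values are correct because $\Delta$ persists, contracted edges contribute $0$ to its energy, and each non-contracted edge contributes $\hat\omega_{ij}^n\|\tilde v_I-\tilde v_J\|^2\to\hat{\tilde\omega}_{IJ}\|\tilde v_I-\tilde v_J\|^2$. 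Hence $E_n(\bs v^n)\le E_n(\bar{\bs v}^n)$ is uniformly bounded in $n$, and for every contracted edge
\[
\|v_j^n-v_i^n\|^2 \;\le\; \frac{E_n(\bar{\bs v}^n)}{\hat\omega_{ij}^n} \;\longrightarrow\; 0.
\]

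For (b), at each boundary vertex $i\in\Delta$ the equilibrium equation $\sum_{j\sim i}\omega_{ij}^n(v_j^n-v_i^n)=0$ expresses the two boundary forces at $i$ as the negative of the sum of interior forces at $i$, and these converge once the interior stresses and vertex positions do. Since the two boundary edge vectors $v_{j}^0-v_i^0$ at $i$ are linearly independent, the two boundary stresses at $i$ are uniquely determined by these forces and therefore converge as well. For (c), \cref{res:appendix_limit_stressed} applied to the convergent stressed sequence produces a self-stressed framework on $\tilde G$ whose stress on $E(\tilde G)\setminus\Delta$ is precisely $\hat{\tilde{\bs\omega}}$ by \eqref{eq:contracted_partial_stress}; together with the fixed boundary $\tilde v_{[i]}=v_i^0$, Tutte's uniqueness identifies it with the claimed Tutte embedding.

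The main obstacle is (a), the length collapse of contracted edges. The key idea is that the Tutte embedding of the quotient graph itself supplies a natural uniformly bounded test configuration for the Dirichlet energy on $G$, so the divergent weights $\hat\omega_{ij}^n\to\infty$ on contracted edges must drag the corresponding edge lengths to zero. This is also the step that crucially uses the assumption that $\Delta$ persists in $\tilde G$: without it, the boundary data of the two problems would not match and no such test configuration would be available.
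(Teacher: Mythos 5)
Your proposal is correct and proves the lemma; the argument for the ``boundary stress'' part (b) and the identification with the Tutte embedding of $\tilde G$ in (c) follow the paper's route closely (equilibrium at a boundary vertex with linearly independent outgoing edge directions, then \cref{res:appendix_limit_stressed} plus Tutte uniqueness to pin down the subsequential limit). The genuine difference is in part (a), the collapse of contracted classes. The paper works directly from the stress equilibrium: assuming some class $I$ has distinct limit points, it picks an extreme point $v_i^*$ of $\conv\{v_k^*: k\in I\}$ with a separating vector $c$ and takes the inner product of the equilibrium equation at $i$ with $c$; the divergence of the spring constants on contracted edges together with the nonnegativity of the separated terms forces a contradiction. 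You instead use the variational (Dirichlet energy) characterization of the Tutte embedding: the Tutte embedding $\bs v^n$ minimizes $E_n(\bs v)=\sum\hat\omega^n_{ij}\|v_i-v_j\|^2$ subject to the boundary constraint, and plugging in the pulled-back Tutte embedding of $\tilde G$ as a test configuration gives a uniform energy bound, so divergent spring constants on contracted edges force those edge lengths to zero, with the clean quantitative bound $\|v_j^n-v_i^n\|^2\le E_n(\bar{\bs v}^n)/\hat\omega^n_{ij}$. This buys a shorter and more quantitative collapse argument at the cost of invoking the variational description of Tutte embeddings, which the paper never states explicitly (\cref{res:tutte} is phrased as an existence/uniqueness statement). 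The persistence of $\Delta$ plays the same load-bearing role in both arguments, just differently: in the paper it ensures each class $I$ contains at most one $\Delta$-vertex so an extreme point with a genuine separating direction exists among the non-boundary vertices of $I$; in your version it guarantees the test configuration satisfies the correct boundary constraints. One small slip: the per-edge contribution of a non-contracted edge $ij$ to $E_n(\bar{\bs v}^n)$ tends to $\hat\omega^*_{ij}\|\tilde v_I-\tilde v_J\|^2$, not $\hat{\tilde\omega}_{IJ}\|\tilde v_I-\tilde v_J\|^2$; the latter appears only after summing over all edges between $I$ and $J$. This does not affect the uniform bound you need.
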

\begin{proof}

Suppose for the proof that $V(\Delta)=\{i_1,i_2,i_3\}$. 

We need to show that the limit $\bs v^n\to\bs v^*$ exists.
Recall from \cref{res:tutte}~that $\bs v^n\subset\conv\{v_{i_1}^n,v_{i_2}^n,v_{i_3}^n\}$. 
Since the latter is a compact set independent of $n$, there exists a convergent subsequence $\bs v^{n_k}$.
We will later see that the limit of~${\bs v}^{n_k}$ is~independent of the chosen subsequence and hence that the limit $\bs v^n\to\bs v^*$ does indeed exist.
For convenience we shall for now write $\bs v^n$ instead of the convergent subsequence $\bs v^{n_k}$, which, as we just explained, will be retroactively justified.

To have $(G,\bs v^*)\simeq (G,\tilbs v)$ we certainly need $v_i^*=v_j^*$ whenever $i,j\in I$.
Suppose $I\in \smash{V(\tilde G)}$ is a counterexample.
Note that $I$ is finite, induces a connected subgraph of $G$, and, since $\Delta$ persists in $\tilde G$, contains at most one vertex of $\Delta$.
From this follow that there is an $i\in I\setminus\Delta$ for which $v_i^*$ is an extreme point (\ie\ vertex) of the convex polygon $\conv\{v_k^*\mid k\in I\}$, and that is furthermore adjacent to a $j\in I$ with $v_j^*\not= v_i^*$.
Since $v_i^*$ is an extreme point, there is a $c\in\RR^2$ so that $\<c,v_k^*-v_i^*\>\ge 0$ for all $k\in I$, an in particular $\<c,v_j^*-v_i^*\>> 0$.
Since $i\not\in \Delta$ the self-stress on all incident edges is given by $\hatbs\omega$.
Hence, the inner product of $c$ with the stress equilibrium conditions at $i$ in $\bs v^n$ becomes
\begin{align*}
0=
\Big\<c,\sum_{\mathclap{k:k\sim i}}\hat\omega_{ik}^n(v_k^n-v_i^n)\Big\>
&=
\sum_{\mathclap{k:k\sim i}}\hat\omega_{ik}^n\<c,v_k^n-v_i^n\>
\\[-0.5ex]&= 
\sum_{\mathclap{\substack{k:k\sim i\\k\sim_{\mathrm c}i}}}\overbrace{\hat\omega_{ij}^n}^{\mathclap{\to\infty}}\underbrace{\<c,v_j^n-v_i^n\>}_{\ge 0}
+ 
\sum_{\mathclap{\substack{k:k\sim i\\k\not\sim_{\mathrm c}i}}}\hat\omega_{ij}^n\<c,v_j^n-v_i^n\>
\end{align*}
The sum on the right converges (as established above) and hence the sum on the~left must converge as well. 
But since all terms in the left sum are positive and $\hat\omega_{ik}\to\infty$, this can only be the case if $\<c,v_k^*-v_i^*\>=0$ for all $k$ in the sum. But this contradicts $\<c,v_j^*-v_i^*\>>0$.
We conclude that $v_i^*=v_j^*$ for all $i,j\in I$ and that $(G,\bs v^*)\simeq (\tilde G,\tilbs v')$ for some $\tilbs v'$.
We later show $\tilbs v'=\tilbs v$.

We next show that the limit $\omega_{ij}^n\to\omega_{ij}^*$ exists also for $ij\in E(\Delta)$. 
For this, write $r_{ij}^n:=v_j^n-v_i^n$; then $r_{ij}^n\to r_{ij}^*:=v_j^*-v_i^*$ and the equilibrium condition at $i_1$ reads
$$
0
=
\sum_{\mathclap{j:j\sim i_1}} \omega_{i_1j}^n r_{i_1j}^n
=
\omega_{i_1i_2}^nr_{i_1 i_2}^n
+ \omega_{i_1i_3}^nr_{i_1 i_3}^n
+ \sum_{\mathclap{\substack{j:j\sim i_1\\ \phantom{j:}j\not\in\Delta}}} \hat\omega_{i_1j}^n r_{i_1j}^n.
$$
By rearranging we find
$$
\omega_{i_1i_2}^nr_{i_1 i_2}^n
+ \omega_{i_1i_3}^nr_{i_1 i_3}^n
= -\sum_{\mathclap{\substack{j:j\sim i_1\\ \phantom{j:}j\not\in\Delta}}} \hat\omega_{i_1j}^n r_{i_1j}^n
\xrightarrow{n\to\infty}
-\sum_{\mathclap{\substack{j:j\sim i_1\\ \phantom{j:}j\not\in\Delta}}} \hat\omega_{i_1j}^* r_{i_1j}^*.
$$
The right side is a finite number. 
Since $r_{i_1i_2}^n=r_{i_1i_2}^0$ and $r_{i_1i_3}^n=r_{i_1i_3}^0$ are \mbox{linearly~inde}\-pendent and constant, $\omega_{i_1i_2}^n$ and $\omega_{i_1i_3}^n$ are uniquely determined and convergent.
We therefore established that $\bs \omega^*$ exists.

So far we have seen $(G,\bs v^n)\to(G,\bs v^*)\simeq (\tilde G,\tilbs v')$ and $\omega_{ij}^n\to\omega_{ij}^*$ whenever $i\not\sim_{\mathrm c} j$.
In other words, we have shown that the preconditions of \cref{res:appendix_limit_stressed} are met.
We conclude that $(G,\bs v^n,\bs\omega^n)\to(G,\bs v^*,\bs\omega^*)\simeq (\tilde G,\tilbs v',\tilbs \omega')$, where $\tilbs \omega'$ is a self-stress~extending $\smash{\hat{\tilbs\omega}}$ (as defined in  \eqref{eq:contracted_partial_stress}).
Hence, it is a Tutte embedding.
By construction it also has $\tilde v_{[i]}' = v_i^0$ for all $i\in V(\Delta)$. 
Since the Tutte embeddings with prescribed vertices for the outer face is unique, we obtain $\smash{(\tilde G,\tilbs v',\tilbs \omega')=(\tilde G,\tilbs v,\tilbs \omega)}$.
It is at this point that we see that the limit $\bs v^*$ must indeed have been independent of the chosen convergent subsequence $\bs v^{n_k}$.
\end{proof}

\subsection{Reciprocal frameworks}
\label{sec:appendix_reciprocal}

To establish the limit behavior for the Maxwell-Cremona correspondence (\cref{res:maxwell_cremona}) we trace the path of its well-known proof using reciprocal frameworks.
In this version, given a 2-dimensional framework~$(G,\bs v)$ of a polyhedral graph $G$, one constructs a one-to one correspondence between
\begin{myenumerate}
    \item self-stressed frameworks $(G,\bs v,\bs \omega)$,
    \item reciprocal frameworks $(G^\star,\bs w)$ (up to translation), and
    \item polyhedral liftings $(G,\bs v,\bs h)$ (up to certain projective transformations).
\end{myenumerate}
In this section we carry the limit from \itm1 to \itm2.

Recall that to each polyhedral graph $G=(V,E,F)$ there is a \Def{dual graph} $G^\star=(F,E^\star,V)$, also polyhedral, whose vertices are the faces of $G$, two of which are~adjacent in $G^\star$ if they share an edge in $G$.
Moreover, each edge $ij\in E$ corresponds~to precisely one edge $\sigma\tau\in E^\star$, a correspondence which we express by $ij\perp \sigma\tau$.

Given a 2-dimensional framework $(G,\bs v)$, a \Def{reciprocal framework} is a framework of the form $(G^\star\!,\bs w)$ so that
$$\<v_j-v_i, w_\tau-w_\sigma\>=0,\quad\text{whenever $ij\perp\sigma\tau$}.$$
To obtain the one-to-one correspondence between self-stresses on $(G,\bs v)$ and reciprocal frameworks of $(G,\bs v)$ (up to translation) one shows that for a given self-stressed framework $(G,\bs v,\bs \omega)$ the system of equations
\begin{equation}
    \label{eq:to_reciprocal}
    w_\tau- w_\sigma=\omega_{ij} R_{\pi/2} (v_j-v_i),\quad\text{whenever $ij\perp\sigma\tau$}
\end{equation}
(where $R_{\pi/2}$ denotes a $90^\circ$-rotation) has a unique solution in $\bs w$ (up to translation), and that it yields a reciprocal framework $(G,\bs w)$.%
\footnote{One can moreover show that~$1/\bs\omega$ defines a self-stresses for the reciprocal framework, where by $1/\bs\omega$ we mean the map $\sigma\tau\mapsto 1/\omega_{ij}$ with $ij\perp\sigma\tau$.%
}

As before, let now $G$ and $\tilde G:=G/\!\sim_{\mathrm c}$ be polyhedral graphs, and  let $G^\star$ and~$\tilde G^\star$ be their corresponding dual graphs.
Let $\smash{(G,\bs v^n,\bs \omega^n)\to (G,\bs v^*,\bs \omega^*)\simeq (\tilde G,\tilbs v,\tilbs\omega)}$ be a contraction sequence of 2-dimensional self-stressed frameworks.
Let $(G^\star,\bs w^n)$ and $(\tilde G^\star,\tilbs w)$ be the corresponding reciprocal frameworks, where we fix $w_{\bar\sigma}^n=\tilde w_{\bar\sigma}$ for an arbitrary face $\bar\sigma\in F(\tilde G)$ so as to make a consistent choice throughout the sequence.
The goal is now to show that $(G^\star,\bs w^n) \to (\tilde G^\star,\tilbs w)$ in the sense $w_\sigma^n\to \tilde w_\sigma$ whenever $\sigma\in F(\tilde G)$.

\begin{lemma}
    In the setting above, there is a framework $(G^\star,\bs w^*)$ so that
    $$(G^\star,\bs w^n) \to (G^\star,\bs w^*)\simeq (\tilde G^\star,\tilbs w).$$
\end{lemma}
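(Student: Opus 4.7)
The strategy is to express the dual vertices of $\tilde{G}^\star$ as path integrals along $\tilde{G}^\star$ of the reciprocal differences on $G^\star$, and then pass to the limit term by term. I would begin by noting that for any edge $ij\in E(G)$ with $i\not\sim_{\mathrm{c}}j$, the force $f_{ij}^n:=\omega_{ij}^n(v_j^n-v_i^n)$ converges to the finite vector $f_{ij}^*:=\omega_{ij}^*(v_j^*-v_i^*)$, since by hypothesis $\omega_{ij}^n\to\omega_{ij}^*$ and $v_j^n-v_i^n\to v_j^*-v_i^*=\tilde v_J-\tilde v_I\neq 0$.

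Since $\tilde G^\star$ is connected (as the dual of a polyhedral graph), for each $\sigma\in F(\tilde G)\subseteq F(G)$ one can choose a path $\bar\sigma=\sigma_0,\sigma_1,\dots,\sigma_k=\sigma$ in $\tilde G^\star$. Every edge $\sigma_\ell\sigma_{\ell+1}$ of $\tilde G^\star$ is dual to a unique edge $i_\ell j_\ell\in E(\tilde G)\subseteq E(G)$, which by definition satisfies $i_\ell\not\sim_{\mathrm{c}}j_\ell$. Telescoping \eqref{eq:to_reciprocal} along this path gives
$$w_\sigma^n \;=\; w_{\bar\sigma}^n \,+\, \sum_{\ell=0}^{k-1} \pm R_{\pi/2}\, f_{i_\ell j_\ell}^n,$$
with signs determined by path orientation. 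Since $w_{\bar\sigma}^n=\tilde w_{\bar\sigma}$ is constant and each summand converges by the first paragraph, the limit $w_\sigma^*:=\lim_n w_\sigma^n$ exists for every $\sigma\in F(\tilde G)$.

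It remains to show that $(\tilde G^\star,(w_\sigma^*)_{\sigma\in F(\tilde G)})$ is the reciprocal framework of $(\tilde G,\tilbs v,\tilbs\omega)$. Fix an edge $IJ\in E(\tilde G)$ with $IJ\perp\sigma\tau$; since $\tilde G$ is polyhedral, $\sigma$ and $\tau$ share a unique edge, so the set $\{ij\in E(G)\mid i\in I,j\in J\}$ consists of a single primal edge $ij$. Passing the reciprocal equation $w_\tau^n-w_\sigma^n=R_{\pi/2}f_{ij}^n$ to the limit yields
$$w_\tau^*-w_\sigma^* \;=\; R_{\pi/2}f_{ij}^* \;=\; \tilde\omega_{IJ}\,R_{\pi/2}(\tilde v_J-\tilde v_I),$$
where the last equality uses $\tilde\omega_{IJ}=\omega_{ij}^*$ (from \eqref{eq:def_omega_tilde} with $|IJ|=1$). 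Thus $(w_\sigma^*)$ is reciprocal to $(\tilde G,\tilbs v,\tilbs\omega)$, and since reciprocal frameworks are unique up to translation and we pinned $w_{\bar\sigma}^*=\tilde w_{\bar\sigma}$, we conclude $w_\sigma^*=\tilde w_\sigma$ for all $\sigma\in F(\tilde G)$, as claimed.

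The main obstacle I anticipate is purely bookkeeping: making sure the identification $|IJ|=1$ for every edge $IJ\in E(\tilde G)$ is clean. This follows from $\tilde G$ being a (simple) polyhedral graph, because two distinct faces of a 3-connected plane graph share at most one edge; the same must then be true for pairs of dual vertices in $\tilde G^\star$. Once this is in place, all other steps are formal manipulations that commute with limits.
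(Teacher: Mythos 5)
Your argument contains a genuine gap concerning the step where you claim $|IJ|=1$ for every $IJ\in E(\tilde G)$. This is false in general, and the failure mode is exactly the situation the lemma must handle. The simplicity of $\tilde G$ says only that each adjacent pair $I,J\in V(\tilde G)$ is joined by a \emph{single} edge of $\tilde G$; it says nothing about the cardinality of the underlying set $\{ij\in E(G)\mid i\in I,\ j\in J\}$, which is what $|IJ|$ denotes. For instance, if the contracted edge $\hati\hatj$ lies on a triangle $\hati\hatj k$ in $G$, that triangle collapses to the edge $\hatij k\in E(\tilde G)$, and this one edge of $\tilde G$ corresponds to the two edges $\hati k,\hatj k\in E(G)$, so $|IJ|=2$. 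In this case the faces $\sigma,\tau\in F(\tilde G)$ incident to $IJ$ are \emph{not} adjacent in $G^\star$ (the collapsed triangle sits between them), so \eqref{eq:to_reciprocal} cannot be applied directly to the pair $\sigma\tau$, and both your telescoping identity along a path in $\tilde G^\star$ and your final equality $w_\tau^*-w_\sigma^*=R_{\pi/2}f_{ij}^*$ break down. Moreover, $\tilde\omega_{IJ}$ is defined by \eqref{eq:def_omega_tilde} as a \emph{sum} over all $ij\in IJ$, so setting $\tilde\omega_{IJ}=\omega_{ij}^*$ drops terms.

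The paper's proof resolves this by replacing each step of a path in $\tilde G^\star$ by a path in $G^\star$ passing through the collapsed faces. Concretely, for a fixed edge $IJ\in E(\tilde G)$ with $IJ\perp\sigma\tau$, one looks at the subgraph $H\subseteq G^\star$ whose vertices are the faces of $G$ using an edge in $IJ$ and whose edges are those of $G^\star$ dual to edges in $IJ$. Each collapsed face contributes an even degree to $H$ (all its vertices live in $I\cup J$, so it meets $IJ$ in an even number of edges), while $\sigma$ and $\tau$ have degree one; the handshaking lemma then forces a path from $\sigma$ to $\tau$ in $H$. Telescoping \eqref{eq:to_reciprocal} along this longer path accumulates \emph{all} the stresses $\omega^*_{i_r j_r}$ across $IJ$, whose sum is precisely $\tilde\omega_{IJ}$. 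This parity argument is the missing idea in your proposal; without it the passage from $(G^\star,\bs w^n)$ to $(\tilde G^\star,\tilbs w)$ is unjustified whenever any face collapses.
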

\begin{proof}
    Fix an edge $\sigma\tau\in E(\tilde G^\star)$ and let $IJ\in E(\tilde G)$ be the corresponding dual edge.
    
    We consider the following subgraph $H\subseteq G^\star$: a face $\rho$ of $G$ is a vertex of $H$ if it has an edge that lies in $IJ$; two faces $\rho,\rho'$ are adjacent in $H$ if they intersect in an edge in $IJ$.
    We make two observations:
    \begin{itemize}[leftmargin=2em]
        \item 
        $\sigma,\tau\in V(H)$ and are of degree one in $H$: if, say, $\sigma$ would have two neighbors $\rho,\rho'$ in $H$ then it would have two edges $e,e'\in IJ$ in $G$, and since it persists in $\tilde G$, it would have $IJ$ as an edge twice in $\tilde G$, which is impossible.
        \item 
        each $\rho\in V(H)$ that persists in $\tilde G$ is incident to $IJ$.
        But since $IJ$ is incident to exactly two faces in $\tilde G$, which are $\sigma$ and $\tau$, each other faces $\rho\in V(H)\setminus\{\sigma,\tau\}$ must have been collapsed in $\tilde G$. This means that all vertices of $\rho$ are in $I$ and $J$, and hence, that $\rho$ has an \emph{even} number of edges in $IJ$, that is, an even degree in $H$.
    \end{itemize}
    We can conclude that all vertices of $H$ are of even degree, except for $\sigma$ and $\tau$, which are of degree one.
    It follows from the handshaking lemma that then $\sigma$ and $\tau$ lie in the same connected component of $H$, that is, there is a path $\sigma_0,\sigma_1,...,\sigma_R$ from $\sigma=\sigma_0$ to $\tau=\sigma_R$ in $H$ (hence $G^\star$) where $\sigma_{r-1}\sigma_r$ form an edge.
 
    Let $e_r:=i_r j_r\perp \sigma_{r-1}\sigma_r$ be the dual edges with $i_r\in I$ and $j_r\in J$.
    For those \eqref{eq:to_reciprocal} becomes
    \begin{align*}
        w_{\sigma_{r}}^n-w_{\sigma_{r-1}}^n 
        =&\;\, 
        \omega_{i_r j_r}^nR_{\pi/2}(v_{j_r}^n-v_{i_r}^n) 
        \\
        \xrightarrow{n\to\infty}& \;\,
        \omega_{i_r j_r}^*R_{\pi/2}(v_{j_r}^*-v_{i_r}^*) 
        = \omega_{i_r j_r}^*R_{\pi/2}(\tilde v_J-\tilde v_I).
    \end{align*}
    Summing both sides over $r\in\{0,...,R\}$ (and then telescoping on the left) yields
    \begin{equation}
        \label{eq:limit_reciprocal}
        w_{\tau}^n-w_{\sigma}^n
        \xrightarrow{n\to\infty}  
        \sum_r \omega_{i_r j_r}^* R_{\pi/2}(\tilde v_J-\tilde v_I) 
        = \tilde\omega_{IJ} R_{\pi/2}(\tilde v_J-\tilde v_I).
    \end{equation}
    In particular, the limit of $w_{\tau}^n-w_{\sigma}^n$ exists.
    
    Since $\sigma\tau\in E(G^\star)$ was arbitrary, $G^\star$ is connected, and we fixed $w_{\bar\sigma}^n:=\tilde w_{\bar\sigma}$, this shows that the limit $w_\sigma^n\to w_\sigma^*$ exists for all $\sigma\in V(G^\star)$. 
    Moreover, by \eqref{eq:limit_reciprocal} $\bs w^*$ is a solution to \eqref{eq:to_reciprocal} applied to $(\tilde G,\tilbs v,\tilbs\omega)$.
    Since the solution to \eqref{eq:to_reciprocal} is unique up to translation, and since we also have $w_{\bar\sigma}^*=\tilde w_{\bar\sigma}$, we obtain $(G^\star,\bs w^*)\simeq(\tilde G^\star,\tilbs w)$. 
\end{proof}

\subsection{Maxwell-Cremona liftings}
\label{sec:appendix_MC}

In this last step we show that the limit behavior transfers from reciprocal frameworks to polyhedral lifts.
We recall that a lifting $\bs h$ is obtained from the reciprocal framework $(G^\star,\bs w)$ as a solution of the following system of equations:
\begin{equation}
\label{eq:to_lift}
h_j-h_i = \<w_{\sigma(ij)},v_j-v_i\>\quad\text{for all $ij\in E$},    
\end{equation}
where $\sigma(ij)$ is any face of $G$ incident to $ij$.
One can show that the solution to this system exists, is independent of the choice of $\sigma(ij)$, and is unique up to a choice of $h_{\bar\imath}$ for an arbitrary $\bar\imath\in V(G)$.

Suppose now that we are given 
$$(G,\bs v^n,\bs\omega^n)\to(G,\bs v^*,\bs\omega^*)\simeq(\tilde G,\tilbs v,\tilbs\omega)$$
and corresponding reciprocals
$$(G^\star,\bs w^n)\to(G^\star,\bs w^*)\simeq (\tilde G^\star,\tilbs w).$$
with $w^n_{\bar\sigma}=\tilde w_{\bar \sigma}$ for all $n$ (where $\bar\sigma\in F(G)$ is an arbitrary face). 

Let $(G,\bs v^n,\bs h^n)$ and $(\tilde G,\tilbs v,\tilbs h)$ be corresponding Maxwell-Cremona lifts (\cf\ \cref{res:maxwell_cremona}) with $\smash{h_{\bar \imath}^n = \tilde h_{[\bar\imath]}}$ for some arbitrary $\bar\imath\in V(G)$.
Our goal is to show that $(G,\bs v^n,\bs h^n)$ converges to $\smash{(\tilde G,\tilbs v,\tilbs h)}$ in the sense $v^n_i\to\tilde v_I$ and $\smash{h^n_i\to\tilde h_I}$ whenever~$i\in I$.

\begin{lemma}
    In the setting above there is a lifted framework $(G,\bs v^*,\bs h^*)$ so that
    $$(G,\bs v^n,\bs h^n) \to (G,\bs v^*,\bs h^*)\simeq (\tilde G,\tilbs v,\tilbs h).$$
\end{lemma}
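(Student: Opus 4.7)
The plan is to mirror the path-telescoping argument used for reciprocal frameworks in \cref{sec:appendix_reciprocal}: propagate convergence of $\bs h^n$ along paths in $G$ via the defining equation \eqref{eq:to_lift}, then identify the limit with $\tilbs h$ by invoking uniqueness of the Maxwell-Cremona lift.

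First I would establish pointwise convergence $h_i^n\to h_i^*$ for every $i\in V(G)$. For each $i$, fix any path $\bar\imath=i_0,i_1,\ldots,i_R=i$ in $G$, and for each edge $i_{r-1}i_r$ fix a face $\sigma_r\in F(G)$ incident to it. Telescoping \eqref{eq:to_lift} along the path yields
$$
h_i^n-h_{\bar\imath}^n \;=\; \sum_{r=1}^R \<w_{\sigma_r}^n,\; v_{i_r}^n - v_{i_{r-1}}^n\>.
$$
Since $h_{\bar\imath}^n=\tilde h_{[\kern0.5pt\bar\imath\kern0.7pt]}$ is constant, $v_i^n\to v_i^*$ for all $i$, and $w_\sigma^n\to w_\sigma^*$ for all $\sigma\in F(G)$ by the previous subsection, the right-hand side converges; I define $h_i^*$ as that limit. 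Consistency across different path/face choices is automatic, because $(G,\bs v^n,\bs w^n)$ is reciprocal for every finite $n$ and this identity passes to the limit.

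Second, I would check that $h_i^*=h_j^*$ whenever $i\sim_{\mathrm c} j$. Each equivalence class $I$ induces a connected subgraph of $G$, so $i$ and $j$ can be joined by a path lying entirely in $I$; along such a path $v_{i_r}^*-v_{i_{r-1}}^*=0$ throughout, so the telescoped expression from the previous step vanishes. This makes $\tilde h_I':=h_i^*$ well-defined for any $i\in I$.

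Third, I would verify that $\tilde{\bs h}'$ satisfies the defining equations \eqref{eq:to_lift} for the Maxwell-Cremona lift of $(\tilde G,\tilbs v)$ reciprocal to $\tilbs w$. Fix $IJ\in E(\tilde G)$ and any incident face $\sigma\in F(\tilde G)$. Since $\tilde F\subseteq F$, the face $\sigma$ is also a face of $G$, and as $\sigma$ is incident to $IJ$ in $\tilde G$ there must exist $ij\in E(G)$ with $i\in I$, $j\in J$ and $\sigma$ incident to $ij$. Passing $h_j^n-h_i^n=\<w_\sigma^n,v_j^n-v_i^n\>$ to the limit gives $\tilde h_J'-\tilde h_I'=\<\tilde w_\sigma,\tilde v_J-\tilde v_I\>$, as needed. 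Because $\tilde h_{[\kern0.5pt\bar\imath\kern0.7pt]}' = h_{\bar\imath}^* = \tilde h_{[\kern0.5pt\bar\imath\kern0.7pt]}$ by construction, and Maxwell-Cremona lifts are unique once the height at a single vertex is fixed, we conclude $\tilde{\bs h}'=\tilbs h$, i.e.\ $h_i^*=\tilde h_I$ for all $i\in I$. I do not anticipate any serious obstacle here: the whole argument is bookkeeping along paths, considerably simpler than the analogous step for reciprocal frameworks, and depends only on tools already established in \cref{sec:appendix_stresses,sec:appendix_tutte,sec:appendix_reciprocal}.
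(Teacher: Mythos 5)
Your proof is correct and follows essentially the same route as the paper's: show $h_j^n-h_i^n$ converges via \eqref{eq:to_lift}, use connectivity of $G$ together with the fixed value $h_{\bar\imath}^n$ to obtain pointwise convergence $h_i^n\to h_i^*$, observe the limit satisfies \eqref{eq:to_lift} for $(\tilde G,\tilbs v)$, and conclude by uniqueness of the lift. Your version is somewhat more explicit (spelling out the telescoping and the separate check that $h_i^*=h_j^*$ for $i\sim_{\mathrm c}j$ via a path inside the equivalence class), whereas the paper folds both observations into one displayed limit; the extra care is harmless and arguably clarifies why the paper's final equality $\<w_\sigma^*,v_j^*-v_i^*\>=\<\tilde w_\sigma,\tilde v_J-\tilde v_I\>$ should be read only for faces $\sigma$ that persist in $\tilde G$.
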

\begin{proof}
    The convergence of $\bs v^n$ is assumed, and so we focus on $\bs h^n$. We have
    $$h_j^n-h_i^n = \<w_\sigma^n, v_j^n-v_i^n\> \xrightarrow{n\to\infty} \<w_\sigma^*, v_j^*-v_i^*\> = \<\tilde w_\sigma, \tilde v_J-\tilde v_I\>.$$
    In particular, $h_j^n-h_i^n$ converges for all $ij\in E$.
    Since $G$ is connected and we fixed $h_{\bar\imath}^n=\smash{\tilde h_{[\bar \imath]}}$, we obtain that the limit $h_i^n\to h_i^*$ exists for all $i\in V(G)$.
    Moreover, $\bs h^*$ satisfies \eqref{eq:to_lift} for $\smash{(\tilde G,\tilbs v)}$.
    Since $\bs h^*$ agrees with $\smash{\tilde h}$ on $\bar\imath$, and since the solution~to~\eqref{eq:to_lift} is unique up to the choice of a single value, we conclude $(G,\bs v^*,\bs h^*)\simeq\smash{(\tilde G,\tilbs v,\tilbs h)}$.
\end{proof}

\end{document}